\begin{document}
\setcounter{tocdepth}{1}

\newtheorem{theorem}{Theorem}    
\newtheorem{proposition}[theorem]{Proposition}
\newtheorem{conjecture}[theorem]{Conjecture}
\def\theconjecture{\unskip}
\newtheorem{corollary}[theorem]{Corollary}
\newtheorem{lemma}[theorem]{Lemma}
\newtheorem{sublemma}[theorem]{Sublemma}
\newtheorem{fact}[theorem]{Fact}
\newtheorem{observation}[theorem]{Observation}
\theoremstyle{definition}
\newtheorem{definition}{Definition}
\newtheorem{notation}[definition]{Notation}
\newtheorem{remark}[definition]{Remark}
\newtheorem{question}[definition]{Question}
\newtheorem{questions}[definition]{Questions}

\newtheorem{example}[definition]{Example}
\newtheorem{problem}[definition]{Problem}
\newtheorem{exercise}[definition]{Exercise}

\numberwithin{theorem}{section}
\numberwithin{definition}{section}
\numberwithin{equation}{section}

\def\reals{{\mathbb R}}
\def\torus{{\mathbb T}}
\def\heis{{\mathbb H}}
\def\integers{{\mathbb Z}}
\def\rationals{{\mathbb Q}}
\def\naturals{{\mathbb N}}
\def\complex{{\mathbb C}\/}
\def\distance{\operatorname{distance}\,}
\def\sym{\operatorname{Symm}\,}
\def\support{\operatorname{support}\,}
\def\dist{\operatorname{dist}}
\def\Span{\operatorname{span}\,}
\def\degree{\operatorname{degree}\,}
\def\kernel{\operatorname{kernel}\,}
\def\dim{\operatorname{dim}\,}
\def\codim{\operatorname{codim}}
\def\trace{\operatorname{trace\,}}
\def\Span{\operatorname{span}\,}
\def\dimension{\operatorname{dimension}\,}
\def\codimension{\operatorname{codimension}\,}
\def\Gl{\operatorname{Gl}}
\def\nullspace{\scriptk}
\def\kernel{\operatorname{Ker}}
\def\Re{\operatorname{Re} }
\def\Im{\operatorname{Im} }
\def\eps{\varepsilon}
\def\lt{L^2}
\def\diver{\operatorname{div}}
\def\curl{\operatorname{curl}}
\newcommand{\norm}[1]{ \|  #1 \|}
\def\expect{\mathbb E}
\def\bull{$\bullet$\ }
\def\det{\operatorname{det}}
\def\Det{\operatorname{Det}}
\def\bestA{\mathbf A}
\def\bestC{\mathbf C}
\def\bestAqd{\mathbf A_{q,d}}
\def\bestBqd{\mathbf B_{q,d}}
\def\bestB{\mathbf B}
\def\bestC{\mathbf C}
\def\Apq{\mathbf A_{p,q}}
\def\Apqr{\mathbf A_{p,q,r}}
\def\rank{\operatorname{rank}}
\def\diameter{\operatorname{diameter}}
\def\essinf{\operatorname{ess\,inf}}
\def\esssup{\operatorname{ess\,sup}}

\newcommand{\abr}[1]{ \langle  #1 \rangle}
\def\unitQ{{\mathbf Q}}
\def\mbfp{{\mathbf P}}

\def\aff{\operatorname{Aff}}
\def\T{{\mathcal T}}

\def\bz{\mathbf z}
\def\bG{\mathbf G}
\def\fg{\mathfrak G}

\newcommand{\Norm}[1]{ \Big\|  #1 \Big\| }
\newcommand{\set}[1]{ \left\{ #1 \right\} }
\newcommand{\sset}[1]{ \{ #1 \} }

\def\one{{\mathbf 1}}
\def\bb{{{\mathbb B}}}
\def\onei{{\mathbf 1}_I}
\def\onee{{\mathbf 1}_E}
\def\onea{{\mathbf 1}_A}
\def\oneb{{\mathbf 1}_B}
\def\onebb{{\mathbf 1}_{\bb}}
\def\wonee{\widehat{\mathbf 1}_E}
\newcommand{\modulo}[2]{[#1]_{#2}}

\def\repair{\medskip\hrule\hrule\medskip}

\def\scriptf{{\mathcal F}}
\def\scripts{{\mathcal S}}
\def\scriptq{{\mathcal Q}}
\def\scriptg{{\mathcal G}}
\def\scriptm{{\mathcal M}}
\def\scriptb{{\mathcal B}}
\def\scriptc{{\mathcal C}}
\def\scriptt{{\mathcal T}}
\def\scripti{{\mathcal I}}
\def\scripte{{\mathcal E}}
\def\scriptv{{\mathcal V}}
\def\scriptw{{\mathcal W}}
\def\scriptu{{\mathcal U}}
\def\scripta{{\mathcal A}}
\def\scriptr{{\mathcal R}}
\def\scripto{{\mathcal O}}
\def\scripth{{\mathcal H}}
\def\scriptd{{\mathcal D}}
\def\scriptl{{\mathcal L}}
\def\scriptn{{\mathcal N}}
\def\scriptp{{\mathcal P}}
\def\scriptk{{\mathcal K}}
\def\scriptP{{\mathcal P}}
\def\scriptj{{\mathcal J}}
\def\scriptz{{\mathcal Z}}
\def\frakv{{\mathfrak V}}
\def\frakE{{\mathfrak E}}
\def\frakG{{\mathfrak G}}
\def\frakA{{\mathfrak A}}
\def\frakB{{\mathfrak B}}
\def\frakC{{\mathfrak C}}
\def\frakf{{\mathfrak F}}
\def\fraki{{\mathfrak I}}
\def\fcross{{\mathfrak F^{\times}}}

\def\symdif{\,\Delta\,}
\def\defe{\dist(E,\frakE)}
\def\defb{|E\symdif \bb|}

\def\ball{\mathbb B}
\def\bp{\mathbf p}
\def\bx{\mathbf x}
\def\by{\mathbf y}
\def\ba{\mathbf a}
\def\bg{\mathbf g}
\def\bt{\mathbf t}
\def\bs{\mathbf s}
\def\bff{\mathbf f}
\def\bF{\mathbf F}
\def\bz{\mathbf z}
\def\bG{\mathbf G}
\def\fg{\mathfrak G}
\def\symG{\scriptg}
\def\sp{\rm Sp}

\author{Michael Christ}
\address{
        Michael Christ\\
        Department of Mathematics\\
        University of California \\
        Berkeley, CA 94720-3840, USA}
\email{mchrist@berkeley.edu}

\date{January 24, 2016. Emendation June 3, 2017.}
\thanks{Research supported by National Science Foundation grant DMS-1363324.}

\title[Young's convolution inequality for Heisenberg groups]
{On Young's convolution inequality \\ for Heisenberg groups}

\begin{abstract} 
Young's convolution inequality provides an upper bound for the convolution of functions
in terms of $L^p$ norms. It is known that for certain groups, including Heisenberg groups, the optimal
constant in this inequality is equal to that for Euclidean space
of the same topological dimension, yet no extremizing functions exist.
For Heisenberg groups we characterize ordered triples of functions that nearly extremize
the inequality. 

The analysis relies on a characterization of approximate solutions of a certain class
of functional equations. A result of this type is developed for a class of such equations. 
\end{abstract}

\maketitle

\section{Introduction}

This paper characterizes ordered triples of functions that
nearly extremize Young's convolution inequality for Heisenberg groups.
We first review Young's inequality with sharp constant for Euclidean spaces,
then review the corresponding inequality for Heisenberg groups, 
recalling observations of Klein and Russo \cite{kleinrusso}
and of Beckner \cite{beckner1995}
concerning the distinction between the Euclidean and Heisenberg settings.
For Heisenberg groups we introduce a group of symmetries of the inequality,
along with a special class of ordered triples of Gaussian functions.
Our main theorem states that an ordered triple of functions
nearly extremizes the inequality if and only if it differs 
by a small amount, in the relevant norm, from
the image of one of these special ordered triples of Gaussians
under some element of the symmetry group.
Our conclusion is of ``$o(1)$'' type; we do not obtain an explicit
upper bound on the difference of norms as a function of the 
discrepancy from exact extremization.

The proof combines a preexisting characterization of near extremizers of
Young's inequality for Euclidean groups
with the structure of Heisenberg groups
and with a characterization of approximate solutions
of certain functional equations.

\subsection{Young's inequality for Euclidean groups}

In its classical form, Young's convolution inequality for the Euclidean group 
$\reals^m$ states that 
the convolution $f*g$ of functions $f,g$ satisfies the upper bound
\begin{equation}\label{classicyoung} \norm{f*g}_{L^r(\reals^m)}
\le \norm{f}_{L^p(\reals^m)}\norm{g}_{L^q(\reals^m)} \end{equation}
whenever $p,q,r\in[1,\infty]$ and $r^{-1} = p^{-1}+q^{-1}-1$.
In its sharp form established by Beckner \cite{beckner} for 
the case when all three of  $p,q,r'$ are less than or equal to $2$, 
and subsequently established independently by Brascamp and Lieb \cite{brascamplieb} 
and by Beckner for the full range of exponents, it states that 
\begin{equation} \norm{f*g}_{L^r(\reals^m)}\le 
\bestC_{p,q}^n \norm{f}_{L^p(\reals^m)}\norm{g}_{L^q(\reals^m)} \end{equation}
with
\begin{equation} \bestC_{p,q} = A_pA_q A_{r'} 
\ \text{ where }\  A_s = s^{1/2s} t^{-1/2t} \text{ with $t=s'$;} \end{equation}
here and below
$s'$ denotes the exponent $s' = s/(s-1)$ conjugate to $s$.
The factor $\bestC_{p,q}$ is strictly less than $1$ provided that $p,q,r\in(1,\infty)$,
and $\bestC_{p,q}^n$ is the optimal constant in this inequality 
for all exponents and  all dimensions.

Write $\bp = (p_1,p_2,p_3)$ with $p_j\in[1,\infty]$,
$\bff=(f_1,f_2,f_3)$, and $\bx = (x_1,x_2,x_3)$ where each $x_j\in\reals^m$.
We use the notational convention
\begin{equation} \norm{\bff}_\bp = \prod_{j=1}^3\norm{f_j}_{p_j}. \end{equation}
An ordered triple $\bp=(p_1,p_2,p_3)$ of exponents is said to be admissible
if $p_j\in[1,\infty]$  and $\sum_{j=1}^3 p_j^{-1}=2$.

Rather than work with the bilinear operation $(f,g)\mapsto f*g$, we will work with
the trilinear form
\begin{equation}
\scriptt(\bff) = \scriptt_{\reals^m}(\bff) =  \int_{x_1+x_2+x_3=0} 
\prod_{j=1}^3 f_j(x_j)\,d\lambda_{\reals^m}(\bx)
\end{equation}
where 
$\lambda_{\reals^m}$ is the natural Lebesgue measure on 
\begin{equation} \label{eq:LambdaRm} \Lambda_{\reals^m} = \{\bx\in(\reals^m)^3: x_1+x_2+x_3=0\}.
\end{equation}
That is,
\[\lambda_{\reals^m}(E) = \int_{\reals^{m}\times\reals^{m}} \one_E(x_1,x_2,-x_1-x_2)\,dx_1\,dx_2.\]
The three variables $x_1,x_2,x_3$ may be freely permuted in the discussion
of $\lambda_{\reals^m}$.

For $\bp\in[1,\infty]^3$ 
define the constant \begin{equation} \bestA_\bp = \prod_{j=1}^3 p_j^{1/2p_j}q_j^{-1/2q_j}\end{equation}
where $q_j$ is the exponent conjugate to $p_j$, with $\infty^{\pm 1/\infty}$ interpreted as $1$.
Then $\bestA_\bp$ is strictly less than $1$ whenever $\bp$ is admissible and each $p_j$ belongs
to the open interval $(1,\infty)$.
The inequality of Beckner and Brascamp-Lieb can be restated as
\begin{equation}\label{triyoungreal} \big|\scriptt_{\reals^m}(\bff)\big| \le \bestA_\bp^{m} 
\norm{\bff}_\bp \end{equation}
whenever $\bp$ is admissible.
The factor $\bestA_\bp^m$ is optimal for all exponents.

By a Gaussian function $G$ with domain equal to a Euclidean space $\reals^m$
we mean a function \begin{equation}\label{Gaussianform} G(x) = ce^{-|L(x-a)|^2+i x\cdot b}\end{equation}
where $c\in\complex$, $a\in\reals^m$, $b\in\reals^m$,
and $L:\reals^m\to\reals^m$ is an invertible linear endomorphism.
A linear imaginary term, $ix\cdot b$, is allowed in the exponent,
but the quadratic part of the exponent is real.
In other contexts, the term ``Gaussian'' may refer to functions that are either more, or less, general. 

For the Euclidean group $\reals^m$, 
extremizing triples $\bff$ for Young's convolution inequality exist
for all admissible exponent triples $\bp$ with each $p_j\in(1,\infty)$. 
All such triples were characterized by Brascamp and Lieb \cite{brascamplieb}.
For each admissible 
$\bff\in L^{p_1}\times L^{p_2}\times L^{p_3}$ 
there exists $\gamma(\bp) = (\gamma_1,\gamma_2,\gamma_3)
\in (0,\infty)^3$ with the following property.
Suppose that $\norm{f_j}_{p_j}>0$ for each index $j$.
If $|\scriptt_{\reals^m}(\bff)| = \bestA_\bp^{m}\norm{\bff}_\bp$
then each function $f_j$ is a Gaussian function 
$G_j = c_j e^{-\rho_j |L_j(x-a_j)|^2 + ix\cdot b_j}$. Moreover, 
the ordered triple $(G_1,G_2,G_3)$ is compatible in the sense that
$a_1+a_2+a_3=0$, $b_1=b_2=b_3$, $L_1=L_2=L_3$,
and $\rho_i/\rho_j = \gamma_i/\gamma_j$ for all $i,j\in\{1,2,3\}$.
Conversely, if each $f_j$ is Gaussian and if these functions are compatible in the
sense indicated, then
$|\scriptt_{\reals^m}(\bff)| = \bestA_\bp^{m}\norm{\bff}_\bp$.
$\gamma(\bp)$ is uniquely specified by $\bp$ if one requires that $\gamma_1=1$.

A yet sharper formulation of Young's inequality for $\reals^m$ is developed in \cite{christyoungest}.
If $\norm{f_j}_{p_j}=1$ for each index $j$ and if 
$\scriptt(\bff) \ge \bestA^m_{\bp}-\delta$
then $\bff$ lies within distance $\eps(\delta)$ of an extremizing 
triple of Gaussians, in the sense that $\norm{f_j-G_j}_{p_j}\le\eps(\delta)$,
and $\eps(\delta)\to 0$ as $\delta\to 0$.
For a partial range of admissible exponents $\bp$, this is shown
\cite{christHY} to hold with $\eps(\delta) = C(m,\bp)\delta^{1/2}$.

\subsection{Young's inequality for Heisenberg groups}

Let $d\in\naturals$, and
identify $\reals^{2d+1}$ with $\reals^{2d}\times\reals$.
The Heisenberg group $\heis^d$ is $\reals^{2d+1}$ as a set, with the group law
\begin{equation} z\cdot z' = (x,t)\cdot(x',t') = (x+x',\,t+t'+\sigma(x,x')) \end{equation}
where $z=(x,t)$, $z'=(x',t')$, and $\sigma:\reals^{2d}\times\reals^{2d}\to\reals^1$
is the symplectic form 
\begin{equation} \sigma(x,x') = \sum_{j=1}^d \big(x_j x'_{j+d} -x_{j+d}x'_j\big).  \end{equation}
Although we use multiplicative notation for the group law, we denote the
the group identity element by  $0 =(0,0)$.
The Heisenberg multiplicative inverse of $(x,t)$ is $(-x,-t)$.
There are of course many alternative isomorphic formulations of this group law,
some of which are in common use.
By a Gaussian function $G:\heis^d\to\complex$ we mean a Gaussian function $G:\reals^{2d+1}\to\complex$,
with respect to the coordinate system for $\heis^d$ introduced above.

$L^p$ norms on $\heis^d$ are defined with respect to Lebesgue measure
on $\reals^{2d+1}$, and will be denoted by $\norm{\cdot}_{L^p}$
and more succinctly by $\norm{\cdot}_p$.
Throughout this paper, integrals over $\heis^d$ or
subsets of $\heis^d$ measure are understood to be with respect
to Lebesgue measure, unless the contrary is explicitly indicated.
Convolution is defined to be
$f*g(u) = \int_{\heis^d} f(uv^{-1})g(v)\,dv$. 
This bilinear operation is associative, but not commutative, on the Schwartz space.

We phrase Young's inequality for $\heis^d$ in terms of the trilinear form
\begin{equation}
\scriptt_{\heis^d}(\bff) =  \int_{z_1z_2z_3=0} \prod_{j=1}^3 f_j(z_j)\,d\lambda(\bz)
\end{equation}
where $z_1z_2z_3$ is the threefold $\heis^d$ product
and $\lambda=\lambda_{\heis^d}$ is the natural Lebesgue measure on 
\begin{equation} \label{eq:LambdaHd}
\Lambda_{\heis^d} = \{\bz\in(\heis^d)^3: z_1z_2z_3=0\}.
\end{equation}
That is, 
\[\lambda(E) = \int_{\heis^{d}\times\heis^{d}} \one_E(z_1,z_2,z_2^{-1}z_1^{-1})\,dz_1\,dz_2\]
and the roles of the variables $z_1,z_2,z_3$ can be interchanged provided that noncommutativity
of the group law is taken properly into account.
Recall that the group identity element of $\heis^d$ is denoted by $0$.
Just as in the Euclidean case, it is elementary that $|\scriptt_{\heis^d}(\bff)| \le \norm{\bff}_\bp$
whenever $f_j\in L^{p_j}$ for all $j$ and $\bp$ is admissible.

Klein and Russo \cite{kleinrusso} and
Beckner \cite{beckner1995} have observed that the sharper inequality
\begin{equation}\label{triyoung} 
\big|\scriptt_{\heis^d}(\bff)\big|\le \bestA_\bp^{2d+1}\norm{\bff}_\bp \end{equation}
holds, with the same constant factor on the right-hand side as for Euclidean space of dimension $2d+1$. 
Moreover, $\bestA_\bp^{2d+1}$ is the optimal constant in this inequality.
Beckner has observed further that there exist no extremizing functions,
that is, $|\scriptt_{\heis^d}(\bff)|$ is strictly less than $\bestA_{\bp}^{2d+1}\norm{\bff}_\bp$
whenever all three functions have positive norms.\footnote{Klein and Russo do not explicitly
discuss existence of extrenizers for Young's inequality, but do prove a closely
related result: There exist no nonzero extremizers for the Heisenberg group
analogue of the $L^p\to L^{p'}$ Hausdorff-Young inequality when the conjugate
exponent $p'$ is an even integer.}

The nonexistence of extremizing functions can be viewed differently.
For each $s\in\reals$, the set $\reals^{2d+1}$ is a group under the operation
$+_s$ defined by
\begin{equation}
(x,t)\,+_s\,(x',t') = (x+x',t+t'+s\sigma(x,x')).
\end{equation}
This group is isomorphic to $\heis^d$ if $s\ne 0$, and to the Euclidean group $\reals^{2d+1}$
for $s=0$.
Haar measure is Lebesgue measure in these coordinates, for all $s$.
The optimal constant in Young's convolution inequality is $\bestA_\bp^{2d+1}$ 
for every $s$.  A datum $(\bff,s)$ realizes this optimal constant if and only if $s=0$
and $\bff$ is a maximizing ordered triple $\bG$ for $\reals^{d+1}$.
Theorem~\ref{thm:main}, below, could be reformulated as an assertion that
$(\bff,s)$ nearly realizes the optimal constant only if $(\bff,s)$
is cloxse to such a datum $(\bG,0)$, in an appropriate sense.

In a series of papers 
\cite{christmarcos},\cite{christyoungest},\cite{christbmtwo},\cite{christbmhigh},\cite{christRS3},\cite{christRShigh},\cite{christHY},\cite{christRSult}
we have studied various sharp inequalities for which extremizing functions (respectively ordered tuples 
of functions or sets) exist and have previously
been characterized. We have shown that functions (respectively ordered tuples of functions or sets)
that nearly extremize the inequalities are nearly equal, in appropriate norms or other measures
of approximation, to extremizing functions (respectively ordered tuples of functions or sets).
The present paper characterizes ordered triples of functions that nearly extremize
Young's inequality for Heisenberg groups --- despite the nonexistence of exact extremizers.


\medskip
\noindent{\em Acknowledgements.}\
The author is grateful to Anthony Carbery for pointing out the question addressed here,
to Detlef M\"uller for calling his attention to the reference \cite{kleinrusso},
and to Edward Scerbo for useful comments on the exposition.
He thanks Joe Wolf, as well as Professors Carbery and M\"uller,
for stimulating conversations.


\section{Definitions and main theorem}
Our main result will state that if $\bff$ nearly extremizes Young's inequality
for $\heis^d$ then there exists an ordered triple $(G_1,G_2,G_3)$ of Gaussians
with certain properties, such that $\norm{f_j-G_j}_{p_j}$ is small for each index $j$.
In order to formulate this result precisely, several definitions are required. 

\subsection{The symplectic group}
Denote by $\sp(2d)$ the symplectic group of all invertible linear mappings
$S:\reals^{2d}\to\reals^{2d}$ satisfying
\begin{equation}
\sigma(Sx,Sx')=\sigma(x,x')\ \text{ for all $x,x'\in\reals^{2d}$}. 
\end{equation}
To $S\in\sp(2d)$ is asociated the group automorphism $(x,t)\mapsto (Sx,t)$ of $\heis^d$.

Let $J$ denote the $2d\times 2d$ matrix 
\begin{equation} J = \begin{pmatrix} 0 & I \\ -I & 0 \end{pmatrix} \end{equation}
where $I$ is the $d\times d$ identity matrix. 
Since $\sigma(x,y) = \langle x,\,Jy\rangle$ for $x,y\in\reals^{2d}$,
the identity $\sigma(Sx,Sy)\equiv \sigma(x,y)$
that defines $\sp(2d)$ is equivalent to
$\langle Sx,JSy\rangle \equiv \langle x,Jy\rangle$.
Thus $S\in\sp(2d)$ if and only if $S^*JS=J$.

\subsection{Symmetries} \label{section:symmetry}

Let $\Psi=(\psi_1^*,\psi_2^*,\psi_3^*)$ be an ordered $3$--tuple of
invertible linear mappings $\psi_j^*:L^{p_j}(\heis^d)\to L^{p_j}(\heis^d)$.
Consider the functional
\begin{equation}\label{Phiratio} 
\Phi(\bff) = |\scriptt_{\heis^d}(\bff)|\,\norm{\bff}_\bp^{-1},
\end{equation} 
defined for all $\bff$ satisfying $\norm{\bff}_\bp\ne 0$.
Given $\bp$, we say that $\Psi$ is a symmetry of the inequality \eqref{triyoung},
or of the functional $\Phi$,
if $\Phi(\Psi\bff)= \Phi(\bff)$ for all $\bff\in L^{p_1}\times L^{p_2}\times L^{p_3}$
with $\norm{\bff}_\bp\ne 0$.
These $3$-tuples form a group under componentwise composition.





Most of the symmetries of $\Phi$ relevant to our considerations
are defined in terms of mappings of the underlying space $\heis^d$.
To any diffeomorphism $\psi$ of $\heis^d$ we associate a linear operator
on functions $f:\heis^d\to\complex$, defined by \[\psi^*(f) = f\circ\psi.\]
We next list four families of ordered triples $(\psi_1,\psi_2,\psi_3)$
of diffeomorphisms of $\heis^d$ 
such that $\Psi=(\psi_1^*,\psi_2^*,\psi_3^*)$ is a symmetry of $\Phi$. 
The first three of these families are:
\begin{equation}
\left\{ \ \ \begin{alignedat}{2}
& (i) &\ \ &
\psi_j(x,t) = 
(rx,r^2 t)
\ \text{ with $r\in\reals^+$} 
\\
& (ii) &&
\psi_j(z) = 
(u_j z w_j)
\ \text{ with $w_1 = u_2^{-1}$, $w_2=u_3^{-1}$, and $w_3=u_1^{-1}$.}
\\ 
& (iii) &&
\psi_j(x,t) = 
(Sx,t)
\text{ with $S\in\sp(2d)$.}
\end{alignedat}\right.
\end{equation}
The fourth family is defined by
\begin{equation} \psi_j(x,t) = (x,t+\varphi_j(x)) \end{equation}
where $(\varphi_1,\varphi_2,\varphi_3)$ is an ordered triple of affine mappings from $\reals^{2d}$
to $\reals^1$ that satisfies 
$\sum_{k=1}^3 \varphi_k(x_k)=0$ whenever $\sum_{k=1}^3 x_k=0$.
In (i), $r$ is independent of $j$; likewise $S$ is independent of $j$ in (iii).
In (ii), $u_jz_jw_j$ is the $\heis^d$ group product of these three elements.

A fifth family of symmetries is defined in terms of modulations of functions,
rather than diffeomorphisms of the underlying space.
For any $u\in \reals^{2d}$ define
$\Psi = (\psi_1,\psi_2,\psi_3)$ by 
\begin{equation}
(\psi_j f)(x,t) = e^{iu\cdot x}f(x,t).
\end{equation}
The exponent $iu\cdot x$ depends only on the coordinate $x$, not on $t$.

Each component of each element of each of these five families is an invertible bounded
linear operator on $L^p(\heis^d)$ for all $p\in[1,\infty]$.
By the composition $\Psi\circ\Psi'$ of two such ordered triples we mean 
the ordered triple $(\psi_1\circ\psi'_1,\psi_2\circ\psi'_2,\psi_3\circ\psi'_3)$
defined by componentwise composition. 

\begin{lemma}
Each of the ordered triples of linear operators $\Psi$ 
listed above is a symmetry of the ratio $\Phi$ for every admissible $\bp$.
\end{lemma}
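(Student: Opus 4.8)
The plan is to verify, family by family, that each $\Psi$ preserves both the numerator $|\scriptt_{\heis^d}(\bff)|$ and the denominator $\norm{\bff}_\bp$ in \eqref{Phiratio}, hence preserves $\Phi$. For the four families defined via diffeomorphisms $(\psi_1,\psi_2,\psi_3)$, the key observation is that each $\psi_j$ preserves Lebesgue measure on $\heis^d$ up to a Jacobian factor that is constant (independent of the point), and that the triple is arranged so that the constraint surface $\Lambda_{\heis^d} = \{z_1 z_2 z_3 = 0\}$ is carried to itself by $(z_1,z_2,z_3)\mapsto(\psi_1(z_1),\psi_2(z_2),\psi_3(z_3))$. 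Granting these two points, a change of variables in both the integral defining $\scriptt_{\heis^d}$ and in each $L^{p_j}$ norm produces matching Jacobian powers on numerator and denominator; because $\bp$ is admissible, $\sum p_j^{-1} = 2$, and the Jacobian exponents cancel exactly. So the heart of the argument is bookkeeping of Jacobians plus checking invariance of $\Lambda_{\heis^d}$.

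Concretely: for family (i), $\psi_j(x,t) = (rx, r^2 t)$ has Jacobian $r^{2d} \cdot r^2 = r^{2d+2}$, it is a group automorphism of $\heis^d$ (one checks $\sigma(rx, rx') = r^2\sigma(x,x')$ matches the $r^2 t$ scaling), so it maps $\Lambda_{\heis^d}$ to itself; the numerator picks up $r^{2(2d+1)}$ from the measure $d\lambda$ on the $2$-dimensional-in-$\heis^d$-terms fiber — more carefully, $\lambda(E) = \int \one_E(z_1,z_2,z_2^{-1}z_1^{-1})\,dz_1\,dz_2$ scales by $(r^{2d+2})^2 = r^{2(2d+2)}$ — while the product of norms $\prod_j \norm{f_j\circ\psi_j}_{p_j} = \prod_j r^{-(2d+2)/p_j}\norm{f_j}_{p_j} = r^{-2(2d+2)}\norm{\bff}_\bp$ using $\sum p_j^{-1} = 2$, and the two factors cancel. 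For family (ii), each $\psi_j$ is left/right translation on $\heis^d$, hence measure-preserving (Haar measure on $\heis^d$ is bi-invariant, being Lebesgue measure), and the telescoping condition $w_1 = u_2^{-1}$, $w_2 = u_3^{-1}$, $w_3 = u_1^{-1}$ is exactly what is needed so that $z_1 z_2 z_3 = 0 \iff (u_1 z_1 w_1)(u_2 z_2 w_2)(u_3 z_3 w_3) = u_1 z_1 (u_2^{-1} u_2) z_2 (u_3^{-1}u_3) z_3 u_1^{-1} = u_1(z_1 z_2 z_3)u_1^{-1} = 0$; no Jacobian arises. For family (iii), $(x,t)\mapsto(Sx,t)$ with $S\in\sp(2d)$ has Jacobian $\det S$, which equals $1$ for symplectic $S$ (since $S^*JS = J$ forces $\det(S)^2 = 1$, and in fact $\det S = 1$), and it is a group automorphism by definition of $\sp(2d)$, so $\Lambda_{\heis^d}$ is preserved and even the individual norms are preserved. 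For the fourth family, $\psi_j(x,t) = (x, t+\varphi_j(x))$ is a shear with Jacobian $1$, preserving each $\norm{f_j}_{p_j}$; the condition $\sum_k \varphi_k(x_k) = 0$ whenever $\sum_k x_k = 0$ is designed so that the $t$-component of $\psi_1(z_1)\psi_2(z_2)\psi_3(z_3)$ still vanishes on $\Lambda_{\heis^d}$ — here one must check that the symplectic cross-terms $\sigma(x_i,x_j)$ appearing in the threefold product are unaffected (they depend only on the $x$-coordinates, which $\psi_j$ leaves fixed), so the shift in the $t$-coordinate of the product is exactly $\sum_k \varphi_k(x_k) = 0$.

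For the fifth family, $\psi_j f(x,t) = e^{iu\cdot x}f(x,t)$, the $L^{p_j}$ norms are visibly unchanged since $|e^{iu\cdot x}| = 1$. For the numerator, on $\Lambda_{\heis^d}$ we have $x_1 + x_2 + x_3 = 0$ (the $\reals^{2d}$-components add, independently of the $t$-components), so $\prod_j e^{iu\cdot x_j} = e^{iu\cdot(x_1+x_2+x_3)} = 1$ identically on the constraint surface, and $\scriptt_{\heis^d}(\Psi\bff) = \scriptt_{\heis^d}(\bff)$ exactly.

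The main obstacle I anticipate is the fourth family: one must be careful that when $(z_1,z_2,z_3)$ ranges over $\Lambda_{\heis^d}$, parametrized by $(z_1,z_2)$ via $z_3 = z_2^{-1}z_1^{-1}$, the three points $x_1, x_2, x_3 = -x_1-x_2$ genuinely satisfy $x_1+x_2+x_3 = 0$ so that the hypothesis on $(\varphi_1,\varphi_2,\varphi_3)$ applies, and that the affine (not merely linear) nature of the $\varphi_j$ is compatible with this — a constant can be absorbed since the condition is imposed on the sum. Verifying that the $t$-component of the triple product $\psi_1(z_1)\psi_2(z_2)\psi_3(z_3)$ equals the $t$-component of $z_1 z_2 z_3$ plus $\sum_k\varphi_k(x_k)$, with no stray $\varphi$-dependent symplectic terms, is the one computation that requires genuine (though short) care; everything else is routine change of variables and the admissibility identity $\sum_j p_j^{-1} = 2$.
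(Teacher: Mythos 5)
Your proposal is correct and is exactly the verification the paper leaves to the reader: for each family you check that the triple of diffeomorphisms (or modulations) preserves the constraint surface $\Lambda_{\heis^d}$ and introduces a constant Jacobian, and that the Jacobian contributions to numerator and denominator cancel via the admissibility identity $\sum_j p_j^{-1}=2$. The only slip is the first, parenthetically offered exponent $r^{2(2d+1)}$ in family (i), which you immediately correct to the right value $r^{2(2d+2)}$; with that correction the bookkeeping is accurate, and the computations for families (ii)--(v), including the key check in family (iv) that the symplectic cross-term $\sigma(x_1,x_2)$ is untouched by the shear so that the $t$-shift is exactly $\sum_k\varphi_k(x_k)=0$, are sound.
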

The straightforward verifications are left to the reader. \qed

\begin{definition} \label{def:symmetrygroup}
$\fg(\heis^d)$ denotes the group of all ordered triples $\Psi$ of diffeomorphisms of $\heis^d$
that can be expressed as compositions of finitely many symmetries of the inequality \eqref{triyoung},
with each factor being one of the five types introduced above.
\end{definition}

\subsection{Special ordered triples of Gaussians on $\heis^d$}

\begin{definition}
Let $d\ge 1$ and $\eps>0$. 
A canonical $\eps$--diffuse Gaussian is a function $G:\heis^d\to\complex$ of the form
\[G(x,t) =  e^{-|Lx|^2} e^{-a t^2} e^{ibt} \]
where $a>0$, $b\in\reals$, and $L:\reals^{2d}\to\reals^{2d}$ is an invertible linear endomorphism,
which together satisfy
\begin{equation} 
\max(a^{1/2},a,\,|b|) \cdot \norm{L^{-1}}^2 \le\eps.
\end{equation}
\end{definition}

Recall the ordered triple $\gamma(\bp)$ introduced above in the
discussion of maximizers for Young's inequality for $\reals^m$.
\begin{definition}
Let $\bp$ be admissible.
An ordered triple $\bG = (G_1,G_2,G_3)$
of canonical $\eps$--diffuse Gaussians
\[G_j(x,t) =  e^{-|L_j x|^2} e^{-a_j t^2} e^{ib_jt} \]
is said to be $\bp$--compatible if
there exist $L,a,b$ such that
$L_j = \gamma_j^{1/2}L$,
$a_j = \gamma_j a$,
and $b_j=b$
for all $j\in\{1,2,3\}$.
\end{definition}

\begin{definition}
Let $d\ge 1$ and let $\eps>0$ be small.
An ordered triple $\bG=(G_1,G_2,G_3)$ of  Gaussian functions
$G_j:\heis^d\to\complex$ is $\eps$--diffuse and $\bp$--compatible if
there exist $\Psi\in\fg(\heis^d)$, scalars $c_j\in\reals^+$,
and a $\bp$--compatible ordered triple $(\tilde G_1,\tilde G_2,\tilde G_3)$
of canonical $\eps$--diffuse Gaussian functions 
such that \[G_j = c_j \psi_j^*\tilde G_j\ \text{ for each index $j \in\{1,2,3\}$.} \]
\end{definition}

\subsection{Main theorem}
\begin{theorem}\label{thm:main}
For each $d\ge 1$ and each 
admissible ordered triple $\bp$ of exponents there exists a function $\delta\mapsto\eps(\delta)$
satisfying $\lim_{\delta\to 0} \eps(\delta)=0$ with the following property.
Let $\bff\in L^{\bp}(\heis^d)$ and suppose that $\norm{f_j}_{p_j}\ne 0$ for each $j\in\{1,2,3\}$.
Let $\delta\in(0,1)$ and
suppose that $|\scriptt_{\heis^d}(\bff)| \ge (1-\delta)\bestA_\bp^{2d+1}\norm{\bff}_{\bp}$.
Then there exists a  $\bp$--compatible
$\eps(\delta)$--diffuse
ordered triple of Gaussians $\bG = (G_1,G_2,G_3)$
such that \begin{equation} \norm{f_j-G_j}_{p_j} < \eps(\delta)\norm{f_j}_{p_j}\ 
\text{ for $j\in\{1,2,3\}$.} \end{equation}
\end{theorem}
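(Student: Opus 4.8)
The plan is to reduce the problem to the already-understood Euclidean case by slicing $\heis^d=\reals^{2d}\times\reals$ in the central variable, the price being the analysis of a pair of approximate functional equations whose only approximate solutions force the relevant Gaussians to be diffuse. Throughout we assume each $p_j\in(1,\infty)$, as $\bestA_\bp<1$ requires, and normalize $\norm{f_j}_{p_j}=1$. Write $z_j=(x_j,t_j)$ with $x_j\in\reals^{2d}$ and $t_j\in\reals$. A computation with the group law shows that $z_1z_2z_3=0$ precisely when $x_1+x_2+x_3=0$ and $t_1+t_2+t_3=-\sigma(x_1,x_2)$, so that
\begin{equation*}
\scriptt_{\heis^d}(\bff)=\int_{x_1+x_2+x_3=0}\ \Big(\int_{t_1+t_2+t_3=-\sigma(x_1,x_2)}\ \textstyle\prod_{j}f_j(x_j,t_j)\Big)\ d\lambda_{\reals^{2d}}(\bx).
\end{equation*}
For fixed $\bx$ the inner integral is a one-dimensional Young trilinear form in $(t_1,t_2,t_3)$, with a translation by $\sigma(x_1,x_2)$ in the third slot that is harmless because $L^{p_3}(\reals)$ is translation invariant; the sharp one-dimensional inequality bounds it by $\bestA_\bp\prod_jF_j(x_j)$, where $F_j(x)=\norm{f_j(x,\cdot)}_{L^{p_j}(\reals)}$, and the sharp inequality on $\reals^{2d}$ bounds the remaining integral by $\bestA_\bp^{2d}\prod_j\norm{F_j}_{p_j}=\bestA_\bp^{2d}$. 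This recovers \eqref{triyoung}; more importantly, the hypothesis forces near-equality throughout: $(F_1,F_2,F_3)$ nearly extremizes Young's inequality on $\reals^{2d}$; for $\bx$ outside a set of small $\prod_jF_j\,d\bx$-measure the slice triple $\big(f_1(x_1,\cdot),f_2(x_2,\cdot),f_3(x_3,\cdot)\big)$ (with $x_3=-x_1-x_2$) nearly extremizes the one-dimensional inequality; and the phase $\Theta(x_1,x_2)$ of the inner integral is nearly constant on the bulk of the support of $\prod_jF_j$.

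Feeding these into the near-extremizer theorem of \cite{christyoungest} --- in dimension $2d$ for $(F_j)$, in dimension $1$ for almost every slice --- yields $F_j\approx\tilde c_je^{-\gamma_j|L(x-a_j)|^2}$ with $a_1+a_2+a_3=0$, and slices $f_j(x_j,\cdot)\approx F_j(x_j)\,e^{i\alpha_j(x_j)}\,e^{-\rho_j(x_j)(t-\beta_j(x_j))^2+i\theta_j(x_j)t}$, subject to the one-dimensional compatibility relations $\rho_i(x_i)/\rho_j(x_j)=\gamma_i/\gamma_j$, $\theta_1(x_1)=\theta_2(x_2)=\theta_3(x_3)$, and shifted centering $\beta_1(x_1)+\beta_2(x_2)+\beta_3(x_3)\approx-\sigma(x_1,x_2)$, all holding for almost every $(x_1,x_2)$ in a large set. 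Since the width and linear phase of a slice that is close to a Gaussian depend only on $x_j$ (the approximating Gaussian being essentially unique), a rigidity step --- fix one of the three spatial variables, vary another --- forces $\rho_j(x_j)=\gamma_ja$ and $\theta_j(x_j)=\theta$ for constants $a>0$, $\theta\in\reals$. Combining the slice phases with the near-constancy of $\Theta$ then produces a second approximate relation, $\alpha_1(x_1)+\alpha_2(x_2)+\alpha_3(x_3)-\theta\,\sigma(x_1,x_2)\approx\mathrm{const}\pmod{2\pi}$ on the bulk of the support.

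At this point two approximate functional equations are in hand, each of the form $\phi_1(x_1)+\phi_2(x_2)+\phi_3(x_3)\approx c\,\sigma(x_1,x_2)+(\text{const})$ on $\{x_1+x_2+x_3=0\}$, holding off a small set and with merely measurable unknowns $\phi_j$. The analytic core of the argument --- and the step I expect to be the main obstacle --- is the structure theorem for such equations, which should give that the $\phi_j$ are close to affine functions on the region where the $F_j$ concentrate, so that the left-hand side is there close to an affine function of $(x_1,x_2)$. Since $\sigma$ is a nondegenerate bilinear form, its best affine approximant on a ball of radius $R$ in the $x$-variables has error of size $\gtrsim|c|R^2$; matching this against the tolerances in the two equations --- of order $a^{-1/2}$ times a quantity vanishing with $\delta$ in the centering equation (the width of the slice Gaussians), and of order a quantity vanishing with $\delta$ in the phase equation --- forces both $a^{1/2}R^2$ and $|\theta|R^2$ to vanish with $\delta$, where $R\asymp\norm{L^{-1}}$ is the spatial scale of the $F_j$. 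In other words, a near-extremizer can exist only when its implicit Gaussians are diffuse in the precise quantitative sense of the definition, namely $a^{1/2}\norm{L^{-1}}^2\to0$ and $|\theta|\,\norm{L^{-1}}^2\to0$ as $\delta\to0$. Making the various full-measure sets mutually compatible, and invoking the quoted near-extremizer theorems in a form stable under the slicing, are the remaining points that demand care.

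Diffuseness then closes the loop. Because $|\sigma(x_1,x_2)|\lesssim R^2$ on the support while the implicit Gaussians vary in $t$ on the far larger scale $a^{-1/2}$, both the shift by $\sigma(x_1,x_2)$ in the third argument and the oscillation $e^{-i\theta\sigma(x_1,x_2)}$ are negligible, whence $\scriptt_{\heis^d}(\bff)=\scriptt_{\reals^{2d+1}}(\bff)+o(1)\norm{\bff}_\bp$ and $\bff$ nearly extremizes Young's inequality on the Euclidean group $\reals^{2d+1}$. Applying \cite{christyoungest} in dimension $2d+1$ produces an $\reals^{2d+1}$-compatible triple of Euclidean Gaussians $\bG$ with $\norm{f_j-G_j}_{p_j}<\eps(\delta)\norm{f_j}_{p_j}$; since $\bG$ then also nearly extremizes $\scriptt_{\heis^d}$, and a direct computation of $\scriptt_{\heis^d}(\bG)\,\norm{\bG}_\bp^{-1}$ shows that a $\reals^{2d+1}$-compatible Gaussian triple does so only when it is diffuse, $\bG$ is diffuse. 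Finally one rewrites $\bG$ in normal form: the common $x$--$t$ cross term in the quadratic exponents (the same for all $j$ by $\reals^{2d+1}$-compatibility) is eliminated by one shear from the fourth family, the centers by translations from the second, the $x$-linear phase by a modulation from the fifth, and the residual normalization $a\norm{L^{-1}}^2\le\eps(\delta)$ is achieved by a dilation $(x,t)\mapsto(rx,r^2t)$ from the first (which leaves the other two diffuseness quantities fixed). This exhibits each $G_j$ as $c_j\psi_j^*\tilde G_j$ with $\Psi\in\fg(\heis^d)$, $c_j\in\reals^+$, and $\tilde\bG$ a $\bp$-compatible triple of canonical $\eps(\delta)$-diffuse Gaussians, which is the assertion. (A common unimodular factor among the $f_j$, invisible to $\Phi$, is absorbed harmlessly --- for instance via a $t$-translation when $\theta\ne0$.)
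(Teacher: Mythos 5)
Your overall route closely tracks the paper's: slice $\heis^d=\reals^{2d}\times\reals$ in the central variable, apply the Euclidean near-extremizer theorem of \cite{christyoungest} in dimension $2d$ to the base marginals $F_j$ and in dimension $1$ to the fibers, extract two approximate functional equations (one for the slice centers, one for the slice phases) whose incompatibility with the $\sigma$-term forces diffuseness, and then normalize via $\fg(\heis^d)$. That is exactly the skeleton of \S5, \S8, and \S9 of the paper, with the centering equation handled by Proposition~\ref{prop:applytoaj} and the phase equation by Lemmas~\ref{lemma:nearlycharacter} and~\ref{lemma:lastgasp}. Two remarks are in order.

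First, the way you close the argument is genuinely different from the paper's. After establishing diffuseness of the implicit slice parameters, you propose to show $\scriptt_{\heis^d}(\bff)=\scriptt_{\reals^{2d+1}}(\bff)+o_\delta(1)\norm{\bff}_\bp$ and then apply the Euclidean $\reals^{2d+1}$ theorem wholesale; the paper instead assembles the approximating Gaussian triple directly from the pieces (slice Gaussians plus base Gaussians) and normalizes it by successive symmetries (see the end of the proof of Lemma~\ref{lemma:alongtheway}). Your route is arguably cleaner, but it requires two extra facts you state without proof: (i) the comparison $\scriptt_{\heis^d}(\bff)\approx\scriptt_{\reals^{2d+1}}(\bff)$ for the actual $\bff$, not just its Gaussian approximants --- this works via a translation-continuity estimate $\norm{\tau_sf_{3,x_3}-f_{3,x_3}}_{p_3}\lesssim|s|a^{1/2}+o_\delta(1)$ on the good set, plus tail control from $\nu_\bF$; and (ii) the converse of Proposition~\ref{prop:sufficiency}, namely that a $\bp$-compatible $\reals^{2d+1}$-Gaussian triple can be a near-extremizer of $\scriptt_{\heis^d}$ only if it is diffuse, which is a separate calculation the paper never needs because its construction produces the diffuse form directly.

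Second, you lump the two approximate functional equations together as if a single real-valued structure theorem (Proposition~\ref{prop:applytoaj}) applied to both. It does not: the phase equation involves unknowns that are only determined modulo $2\pi$, so one needs the multiplicative variant of the structure theorem (Lemma~\ref{lemma:nearlycharacter}), and --- more delicately --- an argument like Lemma~\ref{lemma:lastgasp} to recover the $t$-slope $\tilde a=\theta$ as a genuine real number before one can form the quantity $|\theta|\norm{L^{-1}}^2$ at all. In the paper this is precisely the content of \S\ref{section:complex}, and it is where the antisymmetry of $\sigma$ is exploited a second time (interchange $y_1,y_2$ and multiply) to strip the $\sigma$-term before invoking the multiplicative lemma. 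As written, your proposal acknowledges that the phase equation is $\pmod{2\pi}$ but does not supply the replacement machinery; this is the one step that would not go through verbatim.
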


Thus $G_j = c_j\psi_j^*\tilde G_j$
where $c_j\in\complex$, $(\tilde G_1,\tilde G_2,\tilde G_3)$
is a canonically $\eps(\delta)$--diffuse $\bp$--compatible ordered triple
of Gaussians, and $\Psi = (\psi_1,\psi_2,\psi_3)\in\fg(\heis^d)$.
All five types of elements of $\fg(\heis^d)$ are encountered in the analysis.

The technique developed here has been adapted to the $ax+b$ group,
and an analogue of Theorem~\ref{thm:main} for that group has been established,
by E.~Scerbo \cite{scerbo}.

\section{Approximate solutions of functional equations}

A principal ingredient of the analysis is a quantitative expression of 
the unsolvability of a variant of the functional equation
\begin{equation}\label{functlequation} \varphi(x)+\psi(y)+\xi(x+y)=0.\end{equation} 
This variant takes the form
\begin{equation} \label{functlequationvariant}
\varphi(x)+\psi(y)+\xi(x+y) + \sigma(x,y)=0
\end{equation} 
where the functions $\varphi,\psi,\xi$ have domains equal to $\reals^{2m}$. 
Its unsolvability is formulated below, in quantitative terms, as Proposition~\ref{prop:applytoaj}.

An {\it ad hoc} argument that relies on the antisymmetry of
$\sigma(x,y)$ will enable us to deduce the information needed
concerning \eqref{functlequationvariant} from what is already known
about approximate solutions of \eqref{functlequation}.
This leads naturally to analogous questions about
more general functional equations, for which this {\it ad hoc} argument may not apply.
We therefore digress to present the following general result, 
which is suggested and motivated by considerations
in this paper, but is not actually used in the proofs of the main theorems.

Consider the difference operators
\begin{equation} \Delta_h f(x) = f(x+h)-f(x),\end{equation}
where $x\in\reals^d$ and $+$ denotes the Euclidean group operation.
Let $\ball$ be an arbitrary ball of positive, finite radius
in $\reals^d$ and let $\tilde\ball$ be a ball of positive,
finite radius in $\reals^d$ centered at the origin.

\begin{theorem} \label{thm:polyfunctleqn}
For each dimension $d\ge 1$, each nonnegative integer $D$,
and each $\eta>0$ 
there exists a function $\delta\mapsto\eps(\delta)$
satisfying $\lim_{\delta\to 0} \eps(\delta)=0$ with the following property.
Suppose that $|\tilde\ball|\ge\eta|\ball|$, $0<\delta\le1$,
and $A\in[0,\infty)$.
Let $\varphi:\ball+\tilde\ball\to\complex$ be Lebesgue measurable.
Suppose that there exists a function
$\ball\times \tilde\ball \owns (x,h) \mapsto P_h(x)\in\complex$
such that
\begin{equation}\label{eq:polyfun1} \big|\Delta_h \varphi(x) - P_h(x)\big| \le A \end{equation}
for all $(x,h)\in\ball\times \tilde\ball$
with the exception of a set of measure $\le \delta|\ball|\cdot |\tilde\ball|$.
Suppose that
\begin{equation}\label{eq:polyfun2} P_h(x) = \sum_{|\alpha|\le D} a_\alpha (h)x^\alpha \end{equation}
is a polynomial function of $x$ of degree $\le D$
whose coefficients $a_\alpha$ are Lebesgue measurable functions of $h$.
Then there exists a polynomial $Q$ of degree at most $D+1$ such that
\begin{equation}\label{eq:polyfunconclusion} \big|\varphi(x)-Q(x)\big| \le CA \end{equation} 
for all $x\in\ball$ outside a set of measure $\le\eps(\delta)|\ball|$.
The constant $C$ and function $\eps$ depend only on $d,D,\eta$. 
\end{theorem}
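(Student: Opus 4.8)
The plan is to reduce Theorem~\ref{thm:polyfunctleqn} to the $D=0$ case by induction on $D$, and to handle the base case $D=0$ by an already-known result on approximate solutions of the functional equation \eqref{functlequation}. For the base case, the hypothesis states that $\Delta_h\varphi(x) = P_h(x)+O(A)$ where $P_h(x)=a_0(h)$ does not depend on $x$, off an exceptional set of relative measure $\le\delta$. Writing this as $\varphi(x+h)-\varphi(x)-a_0(h)=O(A)$, we are exactly in the setting of an approximate version of \eqref{functlequation} with $\psi=-a_0$, $\xi=\varphi$, $\varphi=-\varphi$ (after a trivial relabeling), to which the cited Euclidean-type stability results apply; the conclusion is that $\varphi$ agrees with an affine function $Q$ (degree $\le 1 = D+1$) up to $O(A)$ off a set of relative measure $\eps(\delta)$. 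If a black-box reference for this precise quantitative statement is not available in the exact form needed, one can prove it directly: a Fubini/pigeonhole argument produces many ``good'' shifts $h$ for which $\varphi(\cdot+h)-\varphi(\cdot)$ is within $O(A)$ of a constant on a large-measure subset of $\ball$, and averaging $\varphi(x+h)-a_0(h)$ over such $h$ in a fixed small ball yields a function of $x$ that is Lipschitz-like up to $O(A)$ and within $O(A)$ of $\varphi$, hence close to an affine function.

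For the inductive step, suppose the result holds for $D-1$ and that \eqref{eq:polyfun1}--\eqref{eq:polyfun2} hold with $P_h$ of degree $\le D$. The idea is to difference once more in a second shift variable to kill the top-degree part. Fix a generic small shift $k\in\tilde\ball$ (scaled so that the relevant sums stay in $\ball+\tilde\ball$); then on a large-measure set,
\[
\Delta_k\Delta_h\varphi(x) = \Delta_k P_h(x) + O(A),
\]
and since $P_h$ has degree $\le D$ in $x$, $\Delta_k P_h(x)$ has degree $\le D-1$ in $x$. Reorganizing $\Delta_k\Delta_h\varphi = \Delta_h(\Delta_k\varphi)$, we see that the function $\varphi_k := \Delta_k\varphi$ satisfies a hypothesis of exactly the same shape as \eqref{eq:polyfun1} but with $D$ replaced by $D-1$ (and $A$ replaced by $2A$, and $\delta$ by $C\delta$, after a Fubini argument over $k$ to select one good value making the exceptional set small). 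By the inductive hypothesis, for each good $k$ there is a polynomial $Q_k$ of degree $\le D$ with $|\Delta_k\varphi(x)-Q_k(x)|\le C A$ off a small exceptional set. Thus $\varphi$ itself has all its ``discrete first differences'' within $O(A)$ of degree-$\le D$ polynomials, off small sets. One then applies the base case ``in disguise'': the map $x\mapsto \varphi(x)$ minus a suitable degree-$\le D+1$ polynomial has differences that are $O(A)$-close to constants, so a final application of the $D=0$ conclusion (or a direct averaging argument) yields the desired $Q$ of degree $\le D+1$. The bookkeeping here — choosing good $k$, controlling how the exceptional-set measures and the function $\eps(\cdot)$ degrade at each of the $D$ inductive steps, and keeping all shifted arguments inside $\ball+\tilde\ball$ by working with a nested family of slightly shrunken balls — is routine but must be done carefully so that the final $\eps$ depends only on $d,D,\eta$.

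The main obstacle, I expect, is the measure-theoretic propagation of the exceptional sets through the induction together with the nonlinearity of $\eps(\delta)$: at each differencing step the exceptional set is controlled only in a relative, Fubini-averaged sense, so one must show that ``for most $k$, $\varphi_k$ satisfies the $(D-1)$-hypothesis with parameters $(CA, C\delta)$'' and that the union over the finitely many $k$ used in the reconstruction step still has small measure. A clean way to organize this is to fix in advance a finite set of shift directions $k_1,\dots,k_N$ (with $N$ depending only on $d$) spanning $\reals^d$ affinely, apply the inductive hypothesis to each $\varphi_{k_i}$, and then recover $\varphi$ from the $Q_{k_i}$ by a discrete integration that is stable because the $k_i$ are in ``general position'' — this is where the hypothesis $|\tilde\ball|\ge\eta|\ball|$ enters, guaranteeing enough room to choose such directions with quantitative control. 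A secondary, purely technical point is that the polynomials $Q_k$ produced at each stage need not depend measurably or continuously on $k$; this is handled by noting that we only ever use finitely many fixed values of $k$, so no regularity in $k$ is required.
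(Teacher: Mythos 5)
Your overall skeleton — induction on $D$, with the $D=0$ base case handled by the known stability result (Lemma~\ref{lemma:simplefunctleqn}), and the inductive step beginning by applying a second difference $\Delta_k$ so that $\Delta_k P_h$ drops to degree $\le D-1$ — matches the paper's structure. But the crucial step of your inductive argument, the passage from the inductive conclusion about $\Delta_k\varphi$ back to a conclusion about $\varphi$, contains a genuine gap.

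Here is the problem concretely. After applying the inductive hypothesis to $\varphi_k = \Delta_k\varphi$ for finitely many fixed shifts $k_1,\dots,k_N$, you obtain polynomials $Q_{k_i}$ of degree $\le D$ with $|\Delta_{k_i}\varphi - Q_{k_i}|\le CA$ off small sets. You then propose to ``discretely integrate'' these to produce a single degree-$(D+1)$ polynomial $R$, and to deduce that $\varphi-R$ satisfies the $D=0$ hypothesis. But the $D=0$ hypothesis requires $\Delta_h(\varphi-R)(x)$ to be $O(A)$-close to a constant for \emph{nearly all} $h\in\tilde\ball$ (a full-measure family of shifts), not merely for the $N$ fixed shifts $k_i$. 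You have no mechanism to upgrade from finitely many shifts to almost all shifts. In fact, the inductive hypothesis applied to $\Delta_k\varphi$ gives you essentially no new information: the original hypothesis \eqref{eq:polyfun1}--\eqref{eq:polyfun2} already asserts that $\Delta_k\varphi \approx P_k$ with $P_k$ a degree-$\le D$ polynomial, so learning that $\Delta_k\varphi$ is close to \emph{some} degree-$\le D$ polynomial $Q_k$ is not progress unless one extracts additional structure.

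The missing idea in the paper is Corollary~\ref{cor:additional}: once the theorem is known for degree $D-1$, it follows that the top-degree coefficients $a_\alpha(h)$, $|\alpha|=D$, in \eqref{eq:polyfun2} must be $O(A)$-close to \emph{affine} functions of $h$ for nearly all $h\in\tilde\ball$ (i.e., uniformly over $h$, not just for finitely many shifts). This uniform affine structure is what makes the reconstruction possible: it supplies coefficients $u_\alpha$ such that $\Delta_s\varphi(x) \approx \sum_{|\alpha|=D} (u_\alpha\cdot s)\,x^\alpha + (\text{lower degree in }x)$, and the compatibility relations on $u_\alpha$ forced by $\Delta_s\Delta_t\varphi=\Delta_t\Delta_s\varphi$ (Lemma~\ref{lemma:qrequirement}) let one solve an approximate closedness/exactness system to produce a single homogeneous polynomial $q$ of degree $D+1$ with $\Delta_s q$ matching the top-degree part \emph{for all} $s$. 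Subtracting $q$ then lowers the degree from $D$ to $D-1$, and the induction is reducing $D$ by one, not jumping to $D=0$. Your proposal neither identifies the uniform-in-$h$ affine structure of the top-degree coefficients nor formulates the linear-algebraic compatibility condition needed to integrate them; those are precisely what the paper's argument supplies and what your sketch is missing. There is also a technical subtlety the paper handles that you do not mention: elimination of the ``bothersome'' $h$-independent constant terms (the $b^\sharp_\alpha$ and $v_\alpha$ in the paper), which is done by a second-differencing trick and is needed to ensure that the first difference of the constructed polynomial vanishes when $h=0$.
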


This is proved in \S\ref{section:differencerelations}.
In the simplest case $D=0$, the assumption is that $|\varphi(x+h)-\varphi(x)- a(h)|\le A$
for nearly all points of $\ball\times\tilde\ball$; one has an approximate version
of the fundamental functional equation \eqref{functlequation}. In that special case,
Theorem~\ref{thm:polyfunctleqn} is proved in \cite{christyoungest}.

It is natural to also record a multiplicative analogue the preceding theorem.
\begin{theorem} \label{thm:polyfunctleqnmult}
For each dimension $d\ge 1$, each nonnegative integer $D$,
and each $\eta>0$ 
there exists a function $\delta\mapsto\eps(\delta)$
satisfying $\lim_{\delta\to 0} \eps(\delta)=0$ with the following property.
Suppose that $|\tilde\ball|\ge\eta|\ball|$, $0<\delta\le1$,
and $A\in[0,2]$.
Let $\varphi:\ball+\tilde\ball\to\reals$ be Lebesgue measurable.
Suppose that there exists a function
$\ball\times \tilde\ball \owns (x,h) \mapsto P_h(x)\in\reals$
such that
\begin{equation}\label{eq:polyfun1mult} 
|e^{i(\varphi(x+h)-\varphi(x))}e^{-iP_h(x)}-1|
\le A 
\end{equation}
for all $(x,h)\in\ball\times \tilde\ball$
with the exception of a set of measure $\le \delta|\ball|\cdot |\tilde\ball|$.
Suppose that
\begin{equation}\label{eq:polyfun2} P_h(x) = \sum_{|\alpha|\le D} a_\alpha (h)x^\alpha \end{equation}
is a polynomial function of $x$ of degree $\le D$
whose coefficients $a_\alpha$ are Lebesgue measurable real-valued functions of $h$.
Then there exists a polynomial $Q$ of degree at most $D+1$ such that
\begin{equation}\label{eq:polyfunconclusion} 
|e^{i\varphi_x)}e^{-iQ(x)}-1|\le CA
\end{equation} 
for all $x\in\ball$ outside a set of measure $\le\eps(\delta)|\ball|$.
The constant $C$ and function $\eps$ depend only on $d,D,\eta$. 
\end{theorem}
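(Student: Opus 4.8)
The plan is to deduce Theorem~\ref{thm:polyfunctleqnmult} from its additive counterpart, Theorem~\ref{thm:polyfunctleqn}, by running the latter's argument in the quotient group $\reals/2\pi\integers$ in place of $\reals$. First one disposes of the degenerate range: if $A\ge A_0$ for a suitable constant $A_0=A_0(D)$, the conclusion holds with $Q\equiv 0$, since $|e^{i\varphi(x)}-1|\le 2\le (2/A_0)A$; so assume henceforth that $A$ is small. For real $s$ one then has $|e^{is}-1|\le A$ if and only if $\dist(s,2\pi\integers)\le\rho(A)$ with $\rho(A)\asymp A$, so hypothesis \eqref{eq:polyfun1mult} furnishes a Lebesgue measurable $\integers$-valued function $(x,h)\mapsto n_h(x)$ on $\ball\times\tilde\ball$ with $|\Delta_h\varphi(x)-P_h(x)-2\pi n_h(x)|\le CA$ for all $(x,h)$ outside a set of measure $\le\delta|\ball|\,|\tilde\ball|$. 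The difficulty is precisely that $P_h(x)+2\pi n_h(x)$ need not be a polynomial in $x$, so Theorem~\ref{thm:polyfunctleqn} does not apply directly; the goal is to reorganize the relation so that it does.

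Proceed by induction on $D$. The base case $D=0$ is the multiplicative analogue of the fundamental equation \eqref{functlequation}: passing to $g=e^{i\varphi}$ and choosing a good base point by a Fubini argument, it reduces to the statements that a measurable approximate character of $\reals^d$ is close to a genuine character, and that a measurable function whose first differences are close to $0$ modulo $2\pi$ off a small set is close modulo $2\pi$ to a constant off a small set; this is essentially the content of the corresponding result of \cite{christyoungest}. For the inductive step, fix a generic direction $h_1\in\tilde\ball$ and set $\varphi^{(1)}(x):=\varphi(x+h_1)-\varphi(x)$. Applying the relation above at $x$ and at $x+h_1$ gives $|\Delta_h\varphi^{(1)}(x)-\bigl(P_h(x+h_1)-P_h(x)\bigr)-2\pi\tilde n_h(x)|\le CA$ off a small set, and $x\mapsto P_h(x+h_1)-P_h(x)$ is a polynomial of degree $\le D-1$ in $x$; hence (after a harmless shrinking of the balls, and for most $h_1$ by Fubini) $\varphi^{(1)}$ satisfies the hypothesis of the theorem with $D$ replaced by $D-1$. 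By the inductive hypothesis there is a polynomial $R_{h_1}(x)$ of degree $\le D$ in $x$ with $\Delta_{h_1}\varphi(x)\equiv R_{h_1}(x)\pmod{2\pi}$ modulo $CA$, off a small set.

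It remains to integrate this in $h_1$. From the exact identity $\Delta_{h_1+h_2}\varphi=\Delta_{h_1}\varphi+\Delta_{h_2}\varphi(\,\cdot\,+h_1)$ one obtains $R_{h_1+h_2}(x)\equiv R_{h_1}(x)+R_{h_2}(x+h_1)\pmod{2\pi}$ modulo $CA$, off a small set, a congruence between polynomials of degree $\le D$ in $x$. Here the essential new ingredient enters: a rigidity lemma to the effect that a polynomial of degree $\le N$ on a ball that lies within $\rho$ of $2\pi\integers$ on all but a $\theta$-fraction of the ball lies within $C(N)(\rho+\theta^{c_N})$ of a single element of $2\pi\integers$ on half of the ball; equivalently, two polynomials of bounded degree that are congruent mod $2\pi$ up to $\rho$ off a small set differ by a constant in $2\pi\integers$ up to $C\rho$. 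Applied to the displayed congruence, this shows that $(h,x)\mapsto R_h(x)$ is, up to an additive $2\pi\integers$-valued term and an error $O(A)$, an honest polynomial cocycle for the translation action; the classical fact that polynomial cocycles are coboundaries (up to a character), together with the fact that a measurable approximate homomorphism $\reals^d\to\reals/2\pi\integers$ is close to a character, then yields a polynomial $Q$ of degree $\le D+1$ with $\Delta_{h_1}\varphi(x)\equiv\Delta_{h_1}Q(x)\pmod{2\pi}$ modulo $CA$, off a small set. Thus $\varphi-Q$ has first differences close to $0$ modulo $2\pi$, and the case $D=0$ gives that $\varphi-Q$ is close modulo $2\pi$ to a constant, which is absorbed into $Q$; this yields $|e^{i\varphi(x)}e^{-iQ(x)}-1|\le CA$ for $x\in\ball$ outside a set of measure $\le\eps(\delta)|\ball|$, with $\eps$ and $C$ depending only on $d,D,\eta$.

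The main obstacle is the rigidity lemma for polynomials near $2\pi\integers$: it fails without the bounded-degree hypothesis, and it must be proved with constants that are stable under removal of a small exceptional set, using polynomial small-ball (remez-type) estimates to bound how often a polynomial of degree $\le N$ can come near the lattice $2\pi\integers$ unless it is essentially constant there. Granting that lemma, the remaining work — propagating the various small exceptional sets through the Fubini arguments and through the choice of the generic direction $h_1$, and the degree bookkeeping — runs exactly parallel to the proof of Theorem~\ref{thm:polyfunctleqn}.
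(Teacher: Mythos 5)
The paper does not write out this proof: it simply remarks that ``The proof of Theorem~\ref{thm:polyfunctleqnmult} is very similar to that of Theorem~\ref{thm:polyfunctleqn}. Details are left to the reader.'' The intended route, then, is to run the additive argument --- normalization, base case $D=0$ via Lemma~\ref{lemma:nearlycharacter}, and then the inductive step built on the commutativity $\Delta_s\Delta_t\varphi=\Delta_t\Delta_s\varphi$, the coefficient-extraction Lemma~\ref{lemma:extractcoeff}, Corollary~\ref{cor:additional}, and Lemma~\ref{lemma:qrequirement} --- with all estimates taken modulo $2\pi\integers$. Your proposal is a genuinely different inductive scheme: instead of subtracting a polynomial $q$ from $\varphi$ to lower $D$, you pass to $\varphi^{(1)}=\Delta_{h_1}\varphi$ for a generic $h_1$, apply the inductive hypothesis in degree $D-1$, and then try to integrate the resulting family $h_1\mapsto R_{h_1}$ via a cocycle-to-coboundary argument. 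That is a real structural departure. One genuine observation on your side: both routes eventually need a Remez-type rigidity lemma (a polynomial of degree $\le N$ that stays within $O(A)$ of $2\pi\integers$ off a small fraction of a ball must in fact stay near a single lattice point); in the paper's route it plays the role of a mod-$2\pi$ version of Lemma~\ref{lemma:extractcoeff}, whereas in yours it is asked to do considerably more work, both on the individual congruences and on the approximate cocycle relation.

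The serious gap is the step ``the classical fact that polynomial cocycles are coboundaries (up to a character)\ldots then yields a polynomial $Q$ of degree $\le D+1$.'' That classical fact concerns exact cocycles $(h,x)\mapsto R_h(x)$ that are jointly polynomial; what you actually have is a family $R_{h_1}$ produced by an inductive hypothesis applied separately to each $\varphi^{(1)}$, hence (i) only determined up to additive errors near $2\pi\integers$, (ii) only satisfying the cocycle identity up to $O(A)$ plus a $2\pi\integers$ shift and off an exceptional set in $(h_1,h_2,x)$ whose structure depends on $h_1$ and $h_2$, and (iii) only measurably selectable in $h_1$ --- indeed you never address the measurable-selection problem at all. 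Deducing from an approximate, mod-$2\pi$, measurable-in-$h$ cocycle relation that the family is within $O(A)$ of $\Delta_{h}Q$ for a single polynomial $Q$ of degree $\le D+1$ is a substantial lemma in its own right, essentially of the same difficulty as what you are trying to prove; it cannot be cited as classical. By contrast, the paper's double-difference scheme never forms a family of polynomials indexed by $h$: it isolates the top-degree coefficients $a_\alpha$ ($|\alpha|=D$) directly via Corollary~\ref{cor:additional}, constructs $q$ from them with a purely linear-algebraic compatibility check (Lemma~\ref{lemma:qrequirement}), and then reduces the degree by subtracting $q$. That is cleaner precisely because it avoids the cocycle machinery and the attendant selection and stability issues. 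Your outline is not wrong in spirit, and the rigidity lemma you flag is correct and provable by Remez/small-ball estimates, but as written the proof has a hole at the cocycle-integration step and would require a significant additional argument to close it.
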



\section{Analogue for twisted convolution}

Consider twisted convolution of functions with domains $\reals^{2d}$.
The associated trilinear forms are
\begin{equation}
\scriptt_{\reals^{2d},\lambda}(\bff)
= \int_{(\reals^{2d})^3} 
e^{i\lambda \sigma(x_1,x_2)}
\prod_{j=1}^3 f_j(x_j) \,d\lambda_{\reals^{2d}}(\bx)
\end{equation}
where $0\ne\lambda\in\reals$ is a parameter and $\bx=(x_1,x_2,x_3)$.
Since $|\scriptt_{\reals^{2d},\lambda}(\bff)|\le \scriptt_{\reals^{2d}}(|f_1|,|f_2|,|f_3|)$,
one has
\begin{equation}
| \scriptt_{\reals^{2d},\lambda}(\bff)|\le \bestA_{\bp}^{2d} \prod_{j=1}^3\norm{f_j}_{p_j}
\end{equation}
for admissible $\bp$.
The constant $\bestA_{\bp}^{2d}$ is optimal \cite{kleinrusso}, 
as one sees by considering ordered triples of
Gaussians that extremize Young's inequality for $\reals^{2d}$
and are concentrated near $0$.
Again, there exist no extremizing triples \cite{kleinrusso}.


\begin{theorem} \label{thm:alt}
For each $d\ge 1$ and each 
admissible ordered triple $\bp$ of exponents there exists a function $\delta\mapsto\eps(\delta)$
satisfying $\lim_{\delta\to 0} \eps(\delta)=0$ with the following property.
Let $\bff\in L^{\bp}(\reals^{2d})$ and suppose that $\norm{f_j}_{p_j}\ne 0$ for each $j\in\{1,2,3\}$.
Let $\delta\in(0,1)$ and
suppose that $|\scriptt_{\reals^{2d},\lambda}(\bff)| \ge (1-\delta)\bestA_\bp^{2d}\norm{\bff}_{\bp}$.
Then there exist $S\in\sp(2d)$ and a  $\bp$--compatible
ordered triple of Gaussians $\bG = (G_1,G_2,G_3)$
such that $G_j^\natural=G_j\circ S$ satisfy
\begin{equation} \norm{f_j-G_j^\natural}_{p_j} < \eps(\delta)\norm{f_j}_{p_j}\ 
\text{ for $j\in\{1,2,3\}$} \end{equation}
and $G_j$ take the form 
\begin{equation} G_j(x) = c_j e^{-\gamma_j(\bp)|L(x-a_j)|^2}e^{ix\cdot v} \end{equation}
where $v\in\reals^{2d}$, $0\ne c_j\in\complex$,
$a_1+a_2+a_3=0$,
and 
\begin{equation} |\lambda|\cdot \norm{L^{-1}}^2\le\eps(\delta).  \end{equation}
\end{theorem}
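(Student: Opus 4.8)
The plan is to reduce to the Euclidean near-extremizer theorem for the moduli $|f_j|$, and then to recover the phases from an approximate instance of the functional equation \eqref{functlequationvariant}. Since
\[ |\scriptt_{\reals^{2d},\lambda}(\bff)|\le\scriptt_{\reals^{2d}}(|f_1|,|f_2|,|f_3|)\le\bestA_\bp^{2d}\norm{\bff}_\bp, \]
the hypothesis forces these three quantities to lie within a factor $1-\delta$ of one another, so in particular $\scriptt_{\reals^{2d}}(|f_1|,|f_2|,|f_3|)\ge(1-\delta)\bestA_\bp^{2d}\norm{\bff}_\bp$. After normalizing norms to $1$, the sharpened form of Young's inequality for $\reals^{2d}$ recalled above (from \cite{christyoungest}) produces an extremizing triple of nonnegative Gaussians $H_j(x)=c_j e^{-\gamma_j(\bp)|M(x-a_j)|^2}$, $c_j>0$, with a common invertible linear $M$ and $a_1+a_2+a_3=0$, such that $\norm{|f_j|-H_j}_{p_j}\le\eps_1(\delta)\norm{f_j}_{p_j}$ with $\eps_1(\delta)\to0$ as $\delta\to0$.

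Because $\sp(2d)$ preserves $\sigma$ and consists of volume-preserving maps, precomposing all three $f_j$ with a fixed $S_0\in\sp(2d)$ alters neither $\scriptt_{\reals^{2d},\lambda}$ nor any $\norm{f_j}_{p_j}$. Using Williamson's normal-form theorem I would choose $S_0$ so that $S_0^*(M^*M)S_0$ is symplectically diagonal, and replace $\bff$ by $(f_1\circ S_0,f_2\circ S_0,f_3\circ S_0)$; after this reduction $H_j$ has the form $c_j e^{-\gamma_j(\bp)|L(x-a_j)|^2}$, with the translates still summing to $0$, where $L$ is the Williamson-diagonal square root --- it carries the concentration region of $\prod_j H_j$ to a round ball, and $L^{-1}JL^{-1}$ has operator norm exactly $\norm{L^{-1}}^2$. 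Writing $f_j=|f_j|e^{i\varphi_j}$ on $\{|f_j|>0\}$, the near-equality
\[ \Bigl|\int e^{i\lambda\sigma(x_1,x_2)}\prod_j f_j(x_j)\,d\lambda_{\reals^{2d}}(\bx)\Bigr|\ge(1-\delta)\int\prod_j|f_j(x_j)|\,d\lambda_{\reals^{2d}}(\bx) \]
forces the argument of the integrand $e^{i\lambda\sigma(x_1,x_2)}\prod_j f_j(x_j)$ to be nearly a constant $\theta$ off a set of small $\prod_j|f_j|$--mass; combining this with the estimate $\norm{|f_j|-H_j}_{p_j}\le\eps_1(\delta)\norm{f_j}_{p_j}$ and changing variables by $z_j=Lx_j$, one obtains
\[ \bigl|e^{i(\lambda\langle z_1,\,L^{-1}JL^{-1}z_2\rangle+\psi_1(z_1)+\psi_2(z_2)+\psi_3(-z_1-z_2)-\theta)}-1\bigr|\le\eps_2(\delta) \]
for all $(z_1,z_2)$ in a ball of bounded radius outside a subset of measure a small fraction of that ball, where $\psi_j$ denotes $\varphi_j$ precomposed with the inverse rescaling $L^{-1}$.

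This is an approximate instance of \eqref{functlequationvariant} on a ball, with the perturbation the skew form $\lambda\langle z_1,L^{-1}JL^{-1}z_2\rangle$ of size $|\lambda|\cdot\norm{L^{-1}}^2$. Proposition~\ref{prop:applytoaj} then yields both that $|\lambda|\cdot\norm{L^{-1}}^2\le\eps(\delta)$ and that each $\psi_j$ agrees, off a small-measure subset of the ball, with an affine function; matching in $\psi_1(z_1)+\psi_2(z_2)+\psi_3(-z_1-z_2)\approx\theta$, now that the skew term is negligible, forces the linear parts of the three $\psi_j$ to coincide, exactly as for Brascamp--Lieb extremizers. Undoing the rescaling gives $\varphi_j(x)\approx v\cdot x+\beta_j$ on the bulk of the mass of $|f_j|$ for a common $v$, whence $\norm{f_j-e^{i\beta_j}e^{ix\cdot v}H_j}_{p_j}\le\eps(\delta)\norm{f_j}_{p_j}$. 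Absorbing $e^{i\beta_j}$ into $c_j$, setting $G_j(x)=c_j e^{-\gamma_j(\bp)|L(x-a_j)|^2}e^{ix\cdot v}$ (so that $a_1+a_2+a_3=0$ and $|\lambda|\norm{L^{-1}}^2\le\eps(\delta)$), and undoing the symplectic change of variables with $S=S_0^{-1}\in\sp(2d)$ --- again volume preserving, so no $L^{p_j}$ distance is affected --- completes the proof, with $G_j^\natural=G_j\circ S$.

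The step I expect to be the main obstacle is the passage through Proposition~\ref{prop:applytoaj}: one must show that the antisymmetric term $\lambda\langle z_1,L^{-1}JL^{-1}z_2\rangle$ cannot be nearly absorbed into a sum $\psi_1(z_1)+\psi_2(z_2)+\xi(z_1+z_2)$ (where $\xi(s)=\psi_3(-s)$) unless $|\lambda|\cdot\norm{L^{-1}}^2$ is small. An \emph{ad hoc} manipulation is available: forming the difference of $\Delta_h$ applied in the variable $z_1$ and $\Delta_h$ applied in the variable $z_2$ annihilates the term $\xi(z_1+z_2)$ and turns the skew form into an expression linear in $z_1+z_2$, which reduces matters to what is already known about approximate solutions of \eqref{functlequation}. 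The upshot is that each $\psi_j$ is nearly a quadratic whose purely second-order part is pinned to a scalar multiple of a fixed skew matrix, and the compatibility of the three quadratics in $\psi_1(z_1)+\psi_2(z_2)+\psi_3(-z_1-z_2)\approx\theta$ then forces that part to be simultaneously symmetric and skew, hence small --- which is precisely the quantitative smallness of $|\lambda|\cdot\norm{L^{-1}}^2$. Making this robust against the exceptional sets and against the inhomogeneity of the Gaussian weight, while keeping every error a qualitative ``$o(1)$'' function of $\delta$, is where the work concentrates; the remaining ingredients --- the Euclidean near-extremizer theorem, the near-equality-in-the-triangle-inequality argument, and Williamson diagonalization --- are comparatively routine.
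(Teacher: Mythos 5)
Your overall plan matches the paper's intended argument: the paper itself only says that Theorem~\ref{thm:alt} ``follows that of Theorem~\ref{thm:main}, with some simplifications,'' and your reduction --- use the bound $|\scriptt_{\reals^{2d},\lambda}(\bff)|\le\scriptt_{\reals^{2d}}(|f_1|,|f_2|,|f_3|)$ to pin the moduli to compatible real Gaussians via Theorem~\ref{thm:youngest}, then treat the phases as an approximate functional equation with a skew perturbation, and finally conclude $|\lambda|\,\inf_{S\in\sp(2d)}\norm{SL^{-1}}^2$ is small via the matrix algebra of Lemma~\ref{lemma:matrixalgebra} (or equivalently Williamson normal form) --- is precisely what the paper's proof of Theorem~\ref{thm:main}, specialized by discarding the $t$--fiber, would give.

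There is, however, a real gap in the step you yourself flag as the crux. You invoke Proposition~\ref{prop:applytoaj} to handle the approximate relation
\[ \bigl|e^{i(\lambda\sigma_L(z_1,z_2)+\psi_1(z_1)+\psi_2(z_2)+\psi_3(-z_1-z_2)-\theta)}-1\bigr|\le\eps_2(\delta), \]
but Proposition~\ref{prop:applytoaj} is an \emph{additive} statement: its hypothesis requires $|a_1(x)+a_2(y)+a_3(x+y)+\sigma_L(x,y)|\le A$ with the $a_j$ genuine real-valued functions, not phases. Your inequality only controls the exponent modulo $2\pi\integers$, and no amount of measurability lets you lift the $\psi_j$ to real-valued functions satisfying the additive bound without first doing the work. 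The paper's own treatment of this difficulty --- \S\ref{section:complex} --- bypasses Proposition~\ref{prop:applytoaj} entirely and instead runs the antisymmetrization trick \emph{multiplicatively}: interchange $z_1,z_2$ and multiply so that $e^{i\lambda\sigma_L}$ cancels, apply the multiplicative functional-equation Lemma~\ref{lemma:nearlycharacter} to conclude each $e^{i\psi_j}$ is near the exponential of an affine function, substitute back, isolate the $\sigma_L$ factor via a ratio as in \eqref{ratioconsidered}, and extract the scalar bound \eqref{tildeaLbound} with the help of Lemma~\ref{lemma:lastgasp}. Your proposed ``$\Delta_h$ in $z_1$ minus $\Delta_h$ in $z_2$'' manipulation does kill the $\xi(z_1+z_2)$ term, but it does not by itself recover the smallness of $|\lambda|\norm{L^{-1}}^2$; that comes only after Lemma~\ref{lemma:nearlycharacter} has been used to make the $\psi_j$ nearly affine and the residual skew term has been compared against an affine function of $(z_1,z_2)$, using that a nonzero skew quadratic form cannot be nearly affine on a ball unless its norm is small. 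So: correct strategy, correct ingredients in the nonnegative reduction and the matrix algebra, but the functional-equation step needs to be replaced by the multiplicative argument of \S\ref{section:complex} rather than Proposition~\ref{prop:applytoaj}.
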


The proof of this theorem follows that of Theorem~\ref{thm:main}, with some simplifications.
Details are left to the reader.

\section{Nonexistence of extremizers and value of the optimal constant} \label{section:recap}

We begin by reviewing proofs that the optimal constant
in Young's inequality for $\heis^d$ equals the optimal constant for
Euclidean space of dimension $2d+1$,
and that extremizing triples do not exist.
To show that the constant for $\heis^d$ is at least as large as for $\reals^{2d+1}$,
let $\eps>0$ be small, and consider the ordered triple 
of functions
$\bff_\eps = (f_{j,\eps}: 1\le j\le 3)$ with $f_{j,\eps}(x,t) = e^{-\gamma_j |x|^2} e^{-\eps \gamma_j t^2}$
and $\gamma(\bp) = (\gamma_1,\gamma_2,\gamma_3)$.
For each $\eps>0$, $\bff_\eps$ extremizes Young's inequality for $\reals^{2d+1}$.
One finds by a simple change of variables $t = \eps^{-1/2}s$ that
\begin{equation} \frac{\scriptt_{\heis^{d}}(\bff_\eps)}{\scriptt_{\reals^{2d+1}}(\bff_\eps)}
\to 1\ \text{ as $\eps\to 0$.} \end{equation}

To prove the reverse implication, let $f_j\in L^{p_j}(\heis^d)$ be nonzero nonnegative functions
which are otherwise arbitrary.
Define
\begin{equation} \label{eq:Fjdefn} \left\{ \begin{aligned}
& F_j(x) = \norm{f_j(x,\cdot)}_{L^{p_j}(\reals)}
\\&
f_{j,x}(t) = f_j(x,t)/F_j(x) \ \text{ if $F_j(x)\ne 0$},
\end{aligned} \right. \end{equation}
with instead  $f_{j,x}(t) \equiv 0$ if $F_j(x)=0$.
Write $\bx = (x_1,x_2,x_3)$. 
Then 
\begin{equation}\label{eq:Eucreduction}
\scriptt_{\heis^d}(\bff) = \int_{\Lambda_{\reals^{2d}}} \prod_{j=1}^3 F_j(x_j)
\scriptt_{\reals^1}(f_{1,x_1},f_{2,x_2},f^\dagger_{3,\bx})\,d\lambda_{\reals^{2d}}(\bx)
\end{equation}
where
\begin{equation} f^\dagger_{3,\bx}(s) = f_{3,x_3}(s + \sigma(x_1,x_2)). \end{equation}
Straightforward calculation gives
$f_{3,x_3}(s + \sigma(x_1,x_2)+\sigma(x_1+x_2,x_3))$ 
as the natural definition of $f^\dagger_{3,\bx}(s)$,
but outside of a $\lambda_{\reals^{2d}}$--null set 
this simplifies to $f_{3,x_3}(s+\sigma(x_1,x_2))$ since 
\[ x_1+x_2+x_3=0 \Longrightarrow \sigma(x_1+x_2,x_3)=\sigma(x_1+x_2,-x_1-x_2)=0.\]

Therefore
\[ |\scriptt_{\reals^1}(f_{1,x_1},f_{2,x_2},f^\dagger_{3,\bx})| 
\le \bestA_{\bp}
\prod_{j=1}^3 \norm{f_{j,x_j}}_{p_j}
\le \bestA_{\bp}
\]
with equality only if $\prod_{j=1}^3 F_j(x_j)\ne 0$
and $(f_{1,x_1},f_{2,x_2},f^\dagger_{3,\bx})$ is an extremizing
triple for Young's inequality for $\reals^1$.
Inserting this into \eqref{eq:Eucreduction} gives
\begin{multline*}
|\scriptt_{\heis^d}(\bff)|  \le \bestA_{\bp} \int_{x_1+x_2+x_3=0}
\prod_{j=1}^3 F_j(x_j)\,d\lambda_{\reals^{2d}}(\bx)
\\ = \bestA_{\bp} \scriptt_{\reals^{2d}}(F_1,F_2,F_3)
\le  \bestA_{\bp} \bestA_{\bp}^{2d} \prod_{j=1}^3 \norm{F_j}_{L^{p_j}(\reals^{2d})}
=   \bestA_{\bp}^{2d+1} \norm{\bff}_\bp.
\end{multline*}
This proves that the optimal constant for $\heis^d$ cannot exceed the optimal constant
for $\reals^{2d+1}$. 

This analysis implicitly proves that extremizers do not exist for $\heis^d$.
For arbitrary nonnegative $f_j\in L^{p_j}(\heis^d)$ with positive norms,
we have shown that
equality holds only if both
(i) for $\lambda$--almost every $\bx\in (\reals^{2d})^3$,
$(f_{1,x_1},f_{2,x_2},f^\dagger_{3,x_3})$
is an extremizing triple for Young's inequality for $\reals^1$
and (ii) $(F_1,F_2,F_3)$ is an extremizing triple for Young's inequality for $\reals^{2d}$.

By the characterization of equality in Young's inequality for $\reals^{2d}$,
each $F_j$ must be a Gaussian; in particular, $F_j$ is nonzero almost everywhere.
Likewise,
$f_{j,y}$ must be a Gaussian for almost every $y\in\reals^{2d}$ for each index $j\in\{1,2,3\}$.
Moreover, $(f_{1,x_1},f_{2,x_2},f^\dagger_{3,x_3})$ must be $\bp$--compatible.
Expressing 
\[ f_{j,y}(s)  = c_{j}(y)e^{-\gamma_{j}(y)(s-a_{j}(y) )^2 + ib_{j}(y)s},\]
compatibility forces the functional equation
\begin{equation} \label{twisted} a_1(y_1)+a_2(y_2)+a_3(-y_1-y_2) + \sigma(y_1,y_2)=0 \end{equation}
for almost every $(y_1,y_2)\in\reals^{2d}\times\reals^{2d}$.

\begin{lemma} \label{lemma:nosoln}
There exists no ordered triple of measurable functions
$a_j:\reals^{2d}\to\complex$ that satisfies the functional equation
\eqref{twisted} for almost every $(y_1,y_2)\in(\reals^{2d})^2$.
\end{lemma}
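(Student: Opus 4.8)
The plan is to argue by contradiction, using only the antisymmetry $\sigma(y,y')=-\sigma(y',y)$ and the vanishing $\sigma(h,h)=0$; no deeper input is needed, and in particular one need not invoke Theorem~\ref{thm:polyfunctleqn}. Suppose measurable functions $a_j:\reals^{2d}\to\complex$ satisfy \eqref{twisted} for a.e.\ $(y_1,y_2)$. The first step is to eliminate $a_3$. The substitution $(y_1,y_2)\mapsto(y_1+h,\,y_2-h)$ is measure preserving and leaves $-y_1-y_2$, hence the argument of $a_3$, unchanged; so for each fixed $h\in\reals^{2d}$ the identity \eqref{twisted} also holds at $(y_1+h,y_2-h)$ for a.e.\ $(y_1,y_2)$. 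Subtracting the two identities, the $a_3$ terms cancel, and bilinearity together with $\sigma(h,h)=0$ gives $\sigma(y_1+h,y_2-h)-\sigma(y_1,y_2)=\sigma(h,y_2)-\sigma(y_1,h)$, so that
\[
\big(a_1(y_1+h)-a_1(y_1)\big)-\sigma(y_1,h)\;=\;-\big(a_2(y_2-h)-a_2(y_2)\big)-\sigma(h,y_2)
\]
for a.e.\ $(y_1,y_2)$. The left side is a function of $y_1$ alone and the right side a function of $y_2$ alone, so by Fubini's theorem, for each $h$, both sides are a.e.\ equal to a single constant $c(h)$ (depending only on $h$); in particular
\[
a_1(y+h)-a_1(y)=\sigma(y,h)+c(h)\qquad\text{for a.e.\ }(y,h)\in\reals^{2d}\times\reals^{2d}.
\]

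The second step is to difference this relation once more in $y$. For fixed $h$, subtracting the relation at $y$ from the relation at $y+h'$ — legitimate for a.e.\ $(y,h')$ since translations preserve null sets — the terms $c(h)$ cancel and $\sigma(y+h',h)-\sigma(y,h)=\sigma(h',h)$, so the iterated difference satisfies
\[
\Delta_{h'}\Delta_h a_1(y)=a_1(y+h+h')-a_1(y+h')-a_1(y+h)+a_1(y)=\sigma(h',h)
\]
for a.e.\ $(y,h,h')$. But the left-hand side is manifestly unchanged under interchanging $h$ and $h'$, so $\sigma(h',h)=\sigma(h,h')$ for a.e.\ $(h,h')$; combined with antisymmetry this forces $\sigma(h,h')=0$ for a.e.\ $(h,h')$. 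That is absurd, since $\sigma$ is a nonzero continuous bilinear form on $\reals^{2d}$ when $d\ge1$ (e.g.\ $\sigma(e_1,e_{d+1})=1$), hence nonzero on a set of positive measure. This contradiction proves the lemma.

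The substantive content is the two-step differencing, which trades the unknown functions $a_j$ for the requirement that the antisymmetric form $\sigma$ agree with a symmetric one; everything else is bookkeeping. The one point I expect to require care in the write-up is verifying that the chain of a.e.\ identities can legitimately be combined: the shift $(y_1,y_2)\mapsto(y_1+h,y_2-h)$, the passage (via Fubini) from the family of a.e.\ identities indexed by $h$ to a single a.e.\ identity in $(y,h)$, and the second differencing all preserve the relevant null sets because the underlying changes of variable are measure preserving; no integrability of the $a_j$ is needed. As an alternative to the second differencing, one could fix a generic $y_0$, read off $c(h)=a_1(y_0+h)-a_1(y_0)-\sigma(y_0,h)$, substitute back, and obtain the cocycle identity $\beta(u+v)-\beta(u)-\beta(v)=\sigma(u,v)$ for $\beta(w):=a_1(y_0+w)-a_1(y_0)$; swapping $u$ and $v$ again forces $\sigma$ to be symmetric, hence zero.
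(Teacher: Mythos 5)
Your proof is correct, and it takes a genuinely different route from the paper's. Both arguments begin by exploiting a symmetry of \eqref{twisted} to kill one of the four terms, but they kill different terms and then diverge entirely. The paper swaps $y_1\leftrightarrow y_2$ and adds, using antisymmetry of $\sigma$ to eliminate the $\sigma$ term; this reduces to the classical additive functional equation $a(x_1)+a(x_2)+a_3(-x_1-x_2)=0$, and the paper then invokes the (nontrivial) regularity theorem that measurable solutions of this equation are a.e.\ affine, propagates affineness of $a_3$ back to $a_1,a_2$, and derives a contradiction from $\sigma$ not being affine. You instead shift $(y_1,y_2)\mapsto(y_1+h,y_2-h)$ along the kernel of the addition map, which preserves $y_1+y_2$ and so eliminates the $a_3$ term; after the Fubini separation-of-variables step you land on $\Delta_h a_1(y)=\sigma(y,h)+c(h)$, and a second differencing together with the symmetry $\Delta_{h'}\Delta_h=\Delta_h\Delta_{h'}$ forces $\sigma$ to be symmetric, hence identically zero by antisymmetry. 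Your argument is thus more elementary and self-contained: it avoids the regularity theorem for measurable additive functions entirely, paying only with a slightly more delicate bookkeeping of null sets (which you handle correctly, since every change of variables used is measure preserving and no integrability of the $a_j$ is needed). The paper's route, by contrast, is shorter given the black box, and slots more naturally into the quantitative version (Proposition~\ref{prop:applytoaj}), which is built on Lemma~\ref{lemma:simplefunctleqn} rather than on iterated differencing. Your alternative cocycle formulation at the end is also correct and is essentially the same computation repackaged.
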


\begin{proof}[Proof of Lemma~\ref{lemma:nosoln}]
Write \eqref{twisted} with the roles of $y_1,y_2$ interchanged, and add the result to \eqref{twisted}.
Since $\sigma$ is antisymmetric, its contributions cancel, leaving
\[ a(x_1)+a(x_2)+a_3(-x_1-x_2)=0\] 
for almost every $(x_1,x_2)\in(\reals^{2d})^3$, where $a = \tfrac12 a_1+ \tfrac12 a_2$.
As is well known, any measurable solutions of this functional
equation must agree almost everywhere with affine functions.
Thus $a_3$ is affine. 

Inserting this conclusion into \eqref{twisted}, we conclude that
there exist functions $\tilde a_j$, which differ from $a_j$ by affine functions,
such that $\tilde a_1(x_1)+\tilde a_2(x_2) + \sigma(x_1,x_2)=0$ almost everywhere.
By freezing almost any value of $x_2$ one finds that $\tilde a_1$
agrees almost everywhere with an affine function. 
The same reasoning applies to $\tilde a_2$.
But the original equation \eqref{twisted} cannot hold with all three functions $a_j$ affine,
since $\sigma$ is not affine.
\end{proof}

This paper establishes a more quantitative form of Lemma~\ref{lemma:nosoln},
and reduces Theorem~\ref{thm:main} to this result by elaborating on the reasoning shown above.
Klein and Russo \cite{kleinrusso} have shown how the same type of reasoning as that shown above
can be applied to certain semidirect product Lie groups. 
Much of the quantitative analysis below extends straightforwardly to more general semidirect products. 
However, each semidirect product leads to its own analogue of the variant \eqref{twisted} 
of the classical functional equation \eqref{functlequation}. In this paper we analyze
only one such variant, leaving a general investigation for future work.
Forthcoming work of E.~Scerbo \cite{scerbo} will adapt this analysis to the $ax+b$ group.

\begin{remark}
There is no solution $(a_1,a_2,a_3)$ of \eqref{twisted} in the sense of distributions.
This remark does not subsume Lemma~\ref{lemma:nosoln},
since the lack of any assumption in that lemma that the functions $a_j$ are locally integrable
prevents their being interpreted as distributions.

To show this, write $y_j = (y_{j,k})_{1\le k\le 2d}$.
Applying $\frac{\partial^2}{\partial y_{1,m}\partial y_{1,n}}$ gives
\[ \frac{\partial^2 a_1}{\partial y_{1,m}\partial y_{1,n}}(y_1)+
\frac{\partial^2 a_3}{\partial y_{1,m}\partial y_{1,n}}(y_1+y_2)
\equiv 0,\]
whence 
$\frac{\partial^2 a_3}{\partial y_{1,m}\partial y_{1,n}}(y_1+y_2)$
is independent of $y_2$ as a distribution. Therefore $a_3$, and hence $a_1$, 
are quadratic polynomials.  The same applies to $a_2$.

Now consider any $k\in\{1,2,\dots,d\}$ and apply
$\frac{\partial^2}{\partial y_{1,k}\partial y_{2,k+d} }
+\frac{\partial^2}{\partial y_{2,k}\partial y_{1,k+d}}$ 
to both sides of \eqref{twisted}. This differential monomial annihilates
$\sigma(y_1,y_2)$. It results that 
$\frac{\partial^2}{\partial y_{k}\partial y_{k+d}}a_3\equiv 0$.
By applying
$\frac{\partial^2}{\partial y_{1,m}\partial y_{2,n}}$
for other pairs $m,n$ one obtains 
$\frac{\partial^2}{\partial y_{m}\partial y_{n}}a_3\equiv 0$
for all $m,n$.
Thus $a_3$ is an affine function.

Once this is known,
apply to 
$\frac{\partial^2}{\partial y_{1,m}\partial y_{1,n}}$
to conclude that $a_1$ is affine. In the same way, $a_2$ is affine.
\eqref{twisted} now expresses
$\sigma(y_1,y_2)$ as a sum of three affine functions, contradicting
the definition of $\sigma$.
\end{remark}

\section{Sufficiency}

\begin{proposition}\label{prop:sufficiency}
Let $d\ge 1$, and let $\bp$ be admissible. For each $\eps>0$
there exists $\eta(\eps)>0$ satisfying $\lim_{\eps\to 0} \eta(\eps)=0$
with the following property.
For any $\bp$--compatible $\eps$--diffuse 
ordered triple $\bG=(G_1,G_2,G_3)$ of Gaussian functions, 
\[\scriptt_{\heis^d}(\bG) \ge (1-\eta(\eps)) \bestA_\bp^{2d+1}\prod_{j=1}^3\norm{G_j}_{p_j}.\]
\end{proposition}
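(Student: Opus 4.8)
The plan is to reduce the computation for $\heis^d$ to the already-understood Euclidean case, exploiting that a $\bp$-compatible $\eps$-diffuse triple is, up to a symmetry $\Psi\in\fg(\heis^d)$, built from canonical $\eps$-diffuse Gaussians $\tilde G_j(x,t)=e^{-|L_jx|^2}e^{-a_jt^2}e^{ib_jt}$ with $L_j=\gamma_j^{1/2}L$, $a_j=\gamma_j a$, $b_j=b$. Since every $\psi_j^*$ in each of the five families preserves the ratio $\Phi$ (by the Lemma), and $\scriptt_{\heis^d}(c_1f_1,c_2f_2,c_3f_3)=c_1c_2c_3\scriptt_{\heis^d}(\bff)$ while $\norm{c_jf_j}_{p_j}=|c_j|\norm{f_j}_{p_j}$, it suffices to prove the inequality for the canonical triple $(\tilde G_1,\tilde G_2,\tilde G_3)$ itself. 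So I reduce at once to showing
\[
\scriptt_{\heis^d}(\tilde G_1,\tilde G_2,\tilde G_3)\ge (1-\eta(\eps))\,\bestA_\bp^{2d+1}\prod_{j=1}^3\norm{\tilde G_j}_{p_j}
\]
for any canonical $\eps$-diffuse $\bp$-compatible triple.

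Next I would compute both sides explicitly. The norms factor: $\norm{\tilde G_j}_{p_j}=\norm{e^{-|L_jx|^2}}_{L^{p_j}(\reals^{2d})}\cdot\norm{e^{-a_jt^2}}_{L^{p_j}(\reals)}$, each a standard Gaussian integral, the first scaling like $|\det L_j|^{-1/p_j}$ times a dimensional constant and the second like $a_j^{-1/2p_j}$ times a constant. For the trilinear form, I use the identity already derived in \S\ref{section:recap}: writing $z_j=(x_j,t_j)$ and integrating over $z_1z_2z_3=0$, the constraint is $x_1+x_2+x_3=0$ together with $t_1+t_2+t_3+\sigma(x_1,x_2)=0$, so
\[
\scriptt_{\heis^d}(\tilde G_1,\tilde G_2,\tilde G_3)=\int_{\Lambda_{\reals^{2d}}}\Big(\prod_j e^{-|L_jx_j|^2}\Big)\,\scriptt_{\reals^1,\sigma(x_1,x_2)}\big(g_1,g_2,g_3\big)\,d\lambda_{\reals^{2d}}(\bx),
\]
where $g_j(t)=e^{-a_jt^2}e^{ib_jt}$ and the inner one-dimensional integral carries the extra shift $\sigma(x_1,x_2)$ in the third slot plus the modulation phases. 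The point is that as $a,|b|,\norm{L^{-1}}^2\to 0$ the phase $e^{ibt}$ and the shift $\sigma(x_1,x_2)$ become negligible relative to the Gaussian width $a^{-1/2}$ in the $t$-variable: the typical value of $|\sigma(x_1,x_2)|$ is comparable to $\norm{L^{-1}}^2$ while the Gaussian scale is $a^{-1/2}$, and $a^{1/2}\norm{L^{-1}}^2\le\eps$, so the relative shift is $O(\eps)$; similarly $|b|\cdot a^{-1/2}\le\eps$. Hence the inner integral converges, uniformly in the effective range of $\bx$, to the shift-free, phase-free value $\scriptt_{\reals^1}(e^{-a_1t^2},e^{-a_2t^2},e^{-a_3t^2})$, which by $\bp$-compatibility of the $a_j$ equals $\bestA_\bp\prod_j\norm{e^{-a_jt^2}}_{p_j}$ exactly (this is the Brascamp--Lieb equality case in dimension one). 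Integrating the resulting factorized bound over $\Lambda_{\reals^{2d}}$ against $\prod_j e^{-|L_jx_j|^2}$ and using the $\reals^{2d}$ equality case for the compatible triple $L_j=\gamma_j^{1/2}L$ yields exactly $\bestA_\bp^{2d}\prod_j\norm{e^{-|L_jx|^2}}_{p_j}$; multiplying the two pieces gives $\bestA_\bp^{2d+1}\prod_j\norm{\tilde G_j}_{p_j}$ in the limit.

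To make this an honest $o(1)$ statement rather than a limit, I would quantify the perturbation. Write the inner integral as $\scriptt_{\reals^1}(g_1,g_2,g_3^{s})$ with $g_3^s(t)=e^{-a_3(t-s)^2}e^{ib t}$, $s=\sigma(x_1,x_2)$, against a modulation; expanding and using $|e^{i\theta}-1|\le|\theta|$ and the smoothness of Gaussians, one gets
\[
\big|\scriptt_{\reals^1}(g_1,g_2,g_3^{s}) - \scriptt_{\reals^1}(e^{-a_1t^2},e^{-a_2t^2},e^{-a_3t^2})\big|\le C\big(a_3^{1/2}|s| + |b|a_3^{-1/2}\big)\prod_j\norm{e^{-a_jt^2}}_{p_j}.
\]
Since $\scriptt_{\heis^d}$ is at least the integral of the \emph{real part} of the inner integrand (or, more carefully, I bound $\scriptt_{\heis^d}$ below by $\scriptt_{\reals^{2d}}$-integrating this lower bound for the inner form), the weighted average of $a_3^{1/2}|\sigma(x_1,x_2)|$ over $\bx$ distributed like $\prod_j e^{-|L_jx_j|^2}$ is $O(a^{1/2}\norm{L^{-1}}^2)=O(\eps)$, and $|b|a^{-1/2}=O(\eps)$, giving the bound with $\eta(\eps)=C\eps$. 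The one genuine subtlety — and the step I expect to be the main obstacle — is keeping the real-part/complex-phase bookkeeping clean: $\scriptt_{\heis^d}(\tilde G_1,\tilde G_2,\tilde G_3)$ is a priori complex, so I must either check that the dominant term is real and positive (it is: the shift- and phase-free value is a positive Gaussian integral) and that the error is controlled in absolute value, or absorb the residual phase into one more modulation symmetry from family (v) so that the claimed $\bG$ in the Proposition has its $b_j$ chosen to make the leading contribution real. Everything else is routine Gaussian calculus and the two Euclidean equality cases (dimensions $1$ and $2d$) already quoted in the excerpt.
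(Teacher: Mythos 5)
Your overall strategy matches the paper's proof of Proposition~\ref{prop:sufficiency}: use the symmetry group to reduce to a canonical $\bp$-compatible, $\eps$-diffuse triple; factor $\scriptt_{\heis^d}$ into an inner $t$-integral and an outer $x$-integral; and show the shift $\sigma(x_1,x_2)$ and the modulation $e^{ibt}$ perturb the exact Euclidean equality cases by $o_\eps(1)$. The paper implements this via a truncation to $\{|(y_1,y_2)|\le\rho(\eps)\}$ after changing variables $y_j=Lx_j$, with uniform estimates on the compact piece; you instead estimate pointwise deviations and integrate the error against the Gaussian weight. Either packaging would be acceptable.

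However, your handling of the modulation term contains a genuine error. You invoke ``$|b|\cdot a^{-1/2}\le\eps$'' as if it were a consequence of $\eps$-diffuseness, but the definition gives only $\max(a^{1/2},a,|b|)\cdot\norm{L^{-1}}^2\le\eps$; one can have $a^{1/2}\norm{L^{-1}}^2\le\eps$ and $|b|\norm{L^{-1}}^2\le\eps$ while $|b|a^{-1/2}\approx 1$ (take $a=\eps^4$, $\norm{L^{-1}}^2=\eps^{-1}$, $|b|=\eps^2$). So the term you intend to control does not go to zero by hypothesis. The actual saving mechanism is the cancellation you don't use: since $b_1=b_2=b_3=b$, the product of phases on the constraint set $t_1+t_2+t_3=-\sigma(x_1,x_2)$ is $e^{ib(t_1+t_2+t_3)}=e^{-ib\sigma(x_1,x_2)}$, which has \emph{no} $t$-dependence; the inner $t$-integral is therefore the phase-free positive real Gaussian integral. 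The residual phase $e^{-ib\sigma_L(y_1,y_2)}$ lives in the outer integral and is controlled by $|b\sigma_L(y_1,y_2)|\le|b|\,\norm{L^{-1}}^2|y_1||y_2|\le\eps|y_1||y_2|$, exactly the bound the hypothesis supplies. Your proposed alternative fix, ``absorb the residual phase into one more modulation symmetry from family (v),'' also does not work: family (v) modulations are of the form $e^{iu\cdot x}$, i.e.\ linear in $x$, whereas $\sigma(x_1,x_2)$ is bilinear in $(x_1,x_2)$ and cannot be written as a sum of linear phases compatible with $x_1+x_2+x_3=0$. Once you replace the incorrect $|b|a^{-1/2}$ bound with the cancellation-plus-$|b|\norm{L^{-1}}^2$ estimate, the rest of your argument goes through and recovers the proposition.
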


More generally, it follows immediately from the triangle inequality that if $\bG$ is $\bp$--compatible
and $\eps$--diffuse, and if $\norm{f_j-G_j}_{p_j}<\eps\norm{f_j}_{p_j}$
for all $j\in\{1,2,3\}$
then 
\[|\scriptt_{\heis^d}(\bff)| \ge (1-\eta(\eps)) \bestA_\bp^{2d+1}\prod_{j=1}^3\norm{f_j}_{p_j}\]
where the function $\eta$ is modified but is still $o_\eps(1)$.

The following notation will be used throughout the analysis, here and below.
\begin{definition} 
For any invertible linear endomorphism $L$ of $\reals^{2d}$, 
\begin{equation} \label{eqdef:sigmaL}
\sigma_L(x,y) = \sigma(L^{-1}x,L^{-1}y)  \end{equation}
for $x,y\in\reals^{2d}$.
\end{definition}

\begin{proof}[Proof of Proposition~\ref{prop:sufficiency}]
Since the action of $\fg(\heis^d)$ preserves the ratio
$|\scriptt_{\heis^d}(\bff)|/\prod_{j=1}^3\norm{f_j}_{p_j}$,
it suffices to prove this for $\bp$--compatible
ordered triples of canonical $\eps$--diffuse Gaussians.
Thus we may assume that 
\[G_j(x,t) = e^{-\gamma_j |Lx|^2} e^{-\gamma_j a t^2}e^{ib t} \]
where $L$ is an invertible linear endomorphism of $\reals^{2d}$, $a>0$, $b\in\reals$, and
$\max(a^{1/2},|b|)\norm{L^{-1}}^2\le\eps$.  In this situation,
\begin{multline*}
\scriptt_{\heis^d}(\bG)
= \int_{\reals^{2d}\times\reals^{2d}}
e^{-\gamma_1|Lx_1|^2-\gamma_2|Lx_2|^2-\gamma_3|L(x_1+x_2)|^2}
\\
\cdot \int_{\reals\times\reals}
e^{-\gamma_1 at_1^2-\gamma_2 at_2^2 - \gamma_3 a (t_1+t_2 + \sigma(x_1,x_2))^2}
e^{i[bt_1+bt_2-b(t_1+t_2+ \sigma(x_1,x_2))]}
\,dt_1\,dt_2
\,dx_1\,dx_2.
\end{multline*}
Cancelling where possible and substituting $Lx_j=y_j$ gives
$|\det(L)|^{-2}\cdot I$ 
where
\begin{multline*}
I = \int_{\reals^{4d}}
e^{-\gamma_1|y_1|^2-\gamma_2|y_2|^2-\gamma_3|y_1+y_2|^2}
e^{-ib\sigma_L(y_1,y_2)}
\\ \cdot
\int_{\reals^2}
e^{-\gamma_1 at_1^2-\gamma_2 at_2^2 - \gamma_3 a (t_1+t_2+\sigma_L(y_1,y_2))^2}
\,dt_1\,dt_2
\,dy_1\,dy_2.
\end{multline*}

Define
\begin{equation*}
J = \int_{\reals^{4d}}
e^{-\gamma_1|y_1|^2-\gamma_2|y_2|^2-\gamma_3|y_1+y_2|^2}
\int_{\reals^2}
e^{-\gamma_1 at_1^2-\gamma_2 at_2^2 - \gamma_3 a (t_1+t_2)^2}
\,dt_1\,dt_2
\,dy_1\,dy_2.
\end{equation*}
$\bG$ is an extremizing ordered triple for Young's inequality with exponents $\bp$
for $\reals^{2d+1}$, with the same coordinates $(x,t)$. Thus
$J =  |\det(L)|^{2} \bestA_\bp^{2d+1} \prod_{j=1}^3\norm{G_j}_{p_j}$.
Thus it suffices to prove that
\[ |I| \ge (1-o_\eps(1))J.\]

An application of Young's inequality for $\reals^1$ to the inner integral, 
followed by an application Young's inequality for $\reals^{2d}$ to the remaining outer integral,
also reveals that 
$|I| \le |\det(L)|^2 \bestA_\bp^{2d+1}\prod_{j=1}^3 \norm{G_j}_{p_j}$.

Let $\eps\mapsto\rho(\eps)$
be a function that tends to $\infty$ slowly as $\eps\to 0$.
The same reasoning shows that if the integrand in the integral defining $I$
is replaced by its absolute value, then the contribution of the region
$\scriptr = \{(y_1,y_2)\in\reals^{4d}: |(y_1,y_2)|>\rho(\eps)\}$ to the integral is $o_\eps(1)$.
Since $|b|\norm{L^{-1}}^2\le\eps$ by hypothesis,
\[|b\sigma_L(y_1,y_2)| \le |b| \norm{L^{-1}}^2\rho(\eps)^2
\le \eps^{1/2} \ \text{ uniformly for all } (y_1,y_2)\in\reals^{4d}\setminus\scriptr\]
provided that $\rho(\eps)$ is chosen to satisfy $\rho(\eps)\le \eps^{-1/4}$.
Therefore
$|e^{-ib\sigma_L(y_1,y_2)}-1| = O(\eps^{1/2})$
uniformly for all $y\in\reals^{4d}\setminus \scriptr$.
Therefore
\[ I = 
\int_{\reals^{4d}}
e^{-\gamma_1|y_1|^2-\gamma_2|y_2|^2-\gamma_3|y_1+y_2|^2}
\int_{\reals^2}
e^{-\gamma_1 at_1^2-\gamma_2 at_2^2 - \gamma_3 a (t_1+t_2+\sigma_L(y_1,y_2))^2}
\,dt_1\,dt_2
\,dy_1\,dy_2\]
plus $o_\eps(1)$.

Define $\scriptr' = \{(t_1,t_2)\in\reals^{2}: |(t_1,t_2)|>\rho(\eps)\}$. 
By the same reasoning, to complete the proof it suffices to have
\[ e^{-\gamma_3a2(t_1+t_2)\sigma_L(y_1,y_2)}e^{-\gamma_3 a \sigma_L(y_1,y_2)^2}
= 1+o_\eps(1)\]
uniformly for all $(y_1,t_1,y_2,t_2)$ such that $(t_1,t_2)\in\reals^2\setminus\scriptr'$ and 
$(y_1,y_2)\in\reals^{4d}\setminus\scriptr$. This holds because
\begin{align*}
|a(t_1+t_2)\sigma_L(y_1,y_2)|
&\le a\rho(\eps)\norm{L^{-1}}^2 \rho(\eps)^2
\\|a\sigma_L(y_1,y_2)^2| & \le a \norm{L^{-1}}^4 \rho(\eps)^4,
\end{align*}
while it is given that $(a^{1/2}+a) \norm{L^{-1}}^2\le\eps$.
\end{proof}

\section{Two ingredients}

In order to prove Theorem~\ref{thm:main}, we will make the 
steps of the reasoning in \S\ref{section:recap} quantitative.
The following result from \cite{christyoungest}, 
the analogue for $\reals^m$ of our main result for $\heis^d$,
will be the first of two main ingredients in the analysis.

\begin{theorem} \label{thm:youngest}
For each admissible $\bp\in(1,\infty)^3$ and each $m\in\naturals$
there exist
$\gamma(\bp) = (\gamma_1,\gamma_2,\gamma_3)\in\reals^+$ 
and a function $\delta\mapsto\eps(\delta)$ satisfying
$\lim_{\delta\to 0^+}\eps(\delta)=0$
with the following property.
If $0\ne f_j\in L^{p_j}(\reals^m)$
and if $\bff=(f_j)_{1\le j\le 3}$ satisfies
$|\scriptt_{\reals^m}(\bff)| \ge (1-\delta)\bestA_{\bp}^m\norm{\bff}_\bp$
then there exists an ordered triple of Gaussian functions of the form
\begin{equation} G_j(x) = c_j e^{-\gamma_j|L(x)-a_j|^2+ix\cdot b}\end{equation}
where 
$0\ne c_j\in\complex$, 
$ a_j,b\in\reals^m$, 
$ \sum_{j=1}^3 a_j=0$, 
and $L:\reals^m\to\reals^m$ is a linear automorphism,
such that
\begin{equation} \norm{f_j-G_j}_{p_j} \le \eps(\delta)\norm{f_j}_{p_j}  \end{equation}
for each $j\in\{1,2,3\}$.
\end{theorem}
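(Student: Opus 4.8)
The plan is to argue by contradiction, using compactness together with the two facts about $\reals^m$ recalled above: extremizers exist, and they are \emph{exactly} the compatible triples of Gaussians (Brascamp--Lieb \cite{brascamplieb}). Suppose the asserted $\eps(\delta)$ does not exist. Then for some admissible $\bp\in(1,\infty)^3$, some $m$, and some $c_0>0$ there are triples $\bff^{(n)}$ with $\norm{f^{(n)}_j}_{p_j}=1$ and $|\scriptt_{\reals^m}(\bff^{(n)})|\ge (1-\tfrac1n)\bestA_\bp^m$, yet $\max_j\norm{f^{(n)}_j-G_j}_{p_j}\ge c_0$ for every admissible Gaussian triple $\bG$ of the stated form. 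The goal is to show that, after normalizing by the noncompact symmetries of $\Phi$ (the affine substitutions $x\mapsto Lx+a$ and the modulations $f\mapsto e^{ix\cdot b}f$, all of which preserve both the functional and the hypotheses), a subsequence of $\bff^{(n)}$ converges strongly in $\prod_j L^{p_j}$ to an exact extremizer; by Brascamp--Lieb this limit is a compatible Gaussian triple, contradicting the lower bound $c_0$.

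First I would reduce to nonnegative functions: since $|\scriptt_{\reals^m}(\bff)|\le \scriptt_{\reals^m}(|f_1|,|f_2|,|f_3|)\le\bestA_\bp^m$, replacing each $f^{(n)}_j$ by $|f^{(n)}_j|$ preserves near-extremization. The phase is recovered at the end by showing that a near-extremizer of the original, complex form must have arguments close, modulo additive constants, to a common linear phase $x\cdot b$; this comes from the equality analysis of the triangle inequality $|\int\prod f_j|\le\int\prod|f_j|$ and is one of the technical points. So assume $f^{(n)}_j\ge 0$.

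The heart of the matter is a concentration-compactness / profile-decomposition analysis of the normalized sequence modulo the affine-and-modulation group. The decisive input is a \emph{non-sharp inverse inequality}: a nonnegative near-extremizer must possess a single dominant bump, on one location and one scale, carrying almost all of the relevant mass — a function spread over several well-separated regions, or over a wide range of scales, cannot come close to saturating Young's inequality. I would prove this by a superlevel-set argument: pass to the sets $E_j(\lambda)=\{f_j>\lambda\}$, apply Young's inequality for indicator sets (equivalently the Riesz--Sobolev / Brunn--Minkowski circle of inequalities, whose near-equality cases are known to force the $E_j$ to be close to homothetic ellipsoids with appropriately related centers), and transfer this structure back to $f_j$ via the layer-cake representation and Minkowski's inequality. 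With this in hand, a suitable choice of $(L_n,a_{j,n},b_n)$ makes the sequence tight — no mass escapes to infinity, none concentrates to a point — so a subsequence converges weakly; strictness of Young's inequality off the Gaussians upgrades weak to strong convergence with a Gaussian limit, and Brascamp--Lieb uniqueness pins down the limiting parameters $L$, $a_j$ (with $\sum_j a_j=0$) and $b$. Because the entire scheme is by contradiction, it yields only a qualitative modulus $\eps(\delta)\to 0$, with no explicit rate, exactly as stated.

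I expect the main obstacle to be precisely this inverse-inequality / tightness step: ruling out the dichotomy (splitting into separated pieces) and vanishing (unbounded spreading) alternatives for the trilinear convolution form, and doing so quantitatively enough that the extracted limit is genuinely an extremizer rather than merely a weak limit of bounded mass. This is the part that does not follow formally from the classification of extremizers, and it is where the real work of \cite{christyoungest} lies.
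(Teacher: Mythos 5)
This theorem is not proved in the present paper; it is imported verbatim from the reference \cite{christyoungest} (``Near-extremizers of Young's inequality for $\reals^d$'') and used as a black box. There is therefore no in-paper proof to compare against, and your task here is really to reconstruct the argument of that other paper.

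With that caveat, your outline is a reasonable high-level account of the general shape of the argument, and you have correctly located the crux: after quotienting by the noncompact symmetry group (affine substitutions $x\mapsto Lx+a_j$, $\sum_j a_j=0$, and modulations $e^{ix\cdot b}$), one must prove that near-extremizing sequences are precompact, i.e.\ rule out vanishing and dichotomy. Once precompactness is in hand, passing to a strong limit and invoking the Brascamp--Lieb classification of exact extremizers is routine, and since the argument is by contradiction one gets only an $o(1)$ modulus $\eps(\delta)$, exactly as stated.

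Where your proposal is thin is precisely at the steps you flag. The assertion that near-equality in the indicator-function (Riesz--Sobolev / Brunn--Minkowski) regime forces superlevel sets to be close to homothetic ellipsoids with compatible centers is itself a deep theorem, not an off-the-shelf fact; in Christ's work this is developed through inverse additive-combinatorial theorems (Freiman-type structure, containment of sum sets in multiprogressions, cf.\ \cite{christmarcos}, \cite{christbmtwo}) and is not simply ``Burchard plus layer cake.'' Moreover, control of a single superlevel set $E_j(\lambda)$ does not suffice: one must obtain coherent control across all levels $\lambda$ simultaneously, with a common ellipsoid $L$ and common centers, before the layer-cake synthesis yields a Gaussian. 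Finally, the phase-recovery step for complex-valued $f_j$ is not ``close, modulo additive constants, to a common linear phase'' by soft triangle-inequality considerations alone; it requires the approximate functional-equation machinery that the present paper records as Lemma~\ref{lemma:simplefunctleqn} and Lemma~\ref{lemma:nearlycharacter} and attributes to \cite{christyoungest}. None of this is wrong in your sketch, but each sentence compresses a substantial lemma into a clause, and the correctness of the overall plan rests entirely on those lemmas, which you do not attempt to prove.
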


The ordered triple $\gamma(\bp)$ is independent of $m$ but is not uniquely determined in this statement; 
$(t\gamma_1,t\gamma_2,t\gamma_3)$
works equally well for any $t\in\reals^+$ since a common factor can be absorbed into $L,a_j$.
But $\gamma(\bp)$ is uniquely determined with the normalization $\gamma_1(\bp)\equiv 1$,
which we enforce henceforth.

The second ingredient is a quantitative expression of the unsolvability of
a functional equation.
In the discussion that follows, $\ball$ always denotes a ball of finite,
positive radius centered at the origin in $\reals^d$. $\ball^*$ denotes the ball
centered at $0$ whose radius is twice that of $\ball$. Sets of Lebesgue measure
zero are negligible for all considerations that follow, so we do not distinguish
between open and closed balls. 
The Cartesian product $\ball\times\ball$ is denoted by $\ball^2$.
The following two lemmas are established in \cite{christyoungest}.

\begin{lemma}\cite{christyoungest}
\label{lemma:simplefunctleqn}
For each $d\in\naturals$ 
there exist $\delta_0>0$ and a function $t\mapsto\eps(t)$
satisfying $\lim_{t\to 0^+} \eps(t)=0$ such that the following conclusion holds.
Let $A\in[0,\infty)$ and $\delta\in(0,\delta_0]$.
Let $\varphi,\psi:\ball\to\complex$ and $\xi:\ball^*\to\complex$ be Lebesgue measurable.
Suppose that
\[ |\varphi(x) + \psi(y) + \xi(x+y)|\le A\]
for all $(x,y)\in\ball^2$
outside a set of measure $\le \delta|\ball|^2$.
Then there exists an affine function $h$ such that
\begin{equation} \big| \varphi(x)-h(x)\big| \le CA \end{equation}
for all $x\in\ball$ outside a set of measure $\eps(\delta)|\ball|$.
The constant $C$ and function $\eps$ depend only on $d$.
\end{lemma}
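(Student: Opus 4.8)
The plan is to eliminate $\psi$ and $\xi$ from the hypothesis by a differencing argument, reducing matters to an approximate version of the equation $\Delta_h\varphi(x)=a(h)$, and then to invoke the case $D=0$ of Theorem~\ref{thm:polyfunctleqn}. First I would record the elementary slicing consequence of $|E|\le\delta|\ball|^2$, where $E\subset\ball^2$ is the exceptional set: by Fubini, outside a set of $x\in\ball$ of measure $\le\delta^{1/2}|\ball|$ the slice $E_x=\{y:(x,y)\in E\}$ has measure $\le\delta^{1/2}|\ball|$, and likewise for the slices in the other variable. Call such points \emph{good}.

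Next, fix a ball $\tilde\ball$ centered at the origin whose radius is a small fixed fraction of the radius of $\ball$, and let $\ball'$ be the concentric ball consisting of those $x$ with $x+\tilde\ball\subset\ball$; then $|\tilde\ball|\ge\eta_0|\ball'|$ for some $\eta_0=\eta_0(d)>0$. For $x\in\ball'$, $h\in\tilde\ball$, and $y\in\ball$ with $(x,y)\notin E$ and $(x+h,y)\notin E$, subtracting the two corresponding instances of the hypothesis gives
\[ \big|\Delta_h\varphi(x)+\xi(x+h+y)-\xi(x+y)\big|\le 2A .\]
The last two terms depend on $(x,y)$ only through the sum $x+y$. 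Hence, whenever $x,x'\in\ball'$, $h\in\tilde\ball$, and $y,y'\in\ball$ satisfy $x+y=x'+y'$ with the four relevant pairs all lying outside $E$, we get $|\Delta_h\varphi(x)-\Delta_h\varphi(x')|\le 4A$. A counting argument — using the slicing above together with the fact that $(x+\ball)\cap(x'+\ball)$ has measure $\gtrsim|\ball|$ when $|x-x'|$ is small — shows that, provided $\delta\le\delta_0(d)$, such $y,y'$ exist for every $x,x'\in\ball'$ with $|x-x'|$ small and lying outside an exceptional set, and for every $h\in\tilde\ball$ outside a further exceptional set depending on $(x,x')$. Taking $a(h):=\Delta_h\varphi(x_0)$ for one fixed good $x_0\in\ball'$, I obtain
\[ \big|\Delta_h\varphi(x)-a(h)\big|\le CA \qquad\text{for all } (x,h)\in(\ball'\times\tilde\ball)\setminus F, \]
with $|F|\le\eps_1(\delta)\,|\ball'|\,|\tilde\ball|$ and $\eps_1(\delta)\to0$ as $\delta\to0$, the bound on $|F|$ being precisely the output of the Fubini bookkeeping of the previous steps.

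Now I would apply the case $D=0$ of Theorem~\ref{thm:polyfunctleqn} on $\ball'\times\tilde\ball$, with $P_h(x)\equiv a(h)$: since $|\tilde\ball|\ge\eta_0|\ball'|$, it yields an affine function $h$ together with a set of measure $\le\eps_2(\delta)|\ball'|$, off which $|\varphi(x)-h(x)|\le C'A$, where $\eps_2(\delta)\to 0$. To upgrade this from $\ball'$ to all of $\ball$, freeze a good $y_0\in\ball'$: the hypothesis gives $\varphi(x)=-\psi(y_0)-\xi(x+y_0)+O(A)$ for all $x\in\ball$ outside a set of measure $\le\delta^{1/2}|\ball|$, while on $\ball'$ we already know $\varphi\approx h$; comparing on $\ball'$ shows $\xi(\cdot+y_0)$ agrees with an affine function of $x$ off a small subset of $\ball'$, and feeding this back into the displayed identity propagates the affine approximation of $\varphi$ to all of $\ball$ off a set whose measure I relabel as $\eps(\delta)|\ball|$. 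Finally I would note there is no circularity: the case $D=0$ of Theorem~\ref{thm:polyfunctleqn} is the more primitive statement, proved in \cite{christyoungest} by a Hyers--Ulam-type stability argument for the measurable Cauchy equation with a small exceptional set, to which Lemma~\ref{lemma:simplefunctleqn} is here reduced.

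The main obstacle is the second step: controlling how the exceptional-set measures propagate under the differencing operation and under the change of variables $u=x+y$, and in particular checking that $\tilde\ball$ can be taken to be a $d$-dependent fraction of $\ball$ so that the constant $C$ and the rate $\eps(\delta)$ depend only on $d$. The algebraic content — that differencing annihilates $\xi$ and that the resulting $\Delta_h\varphi(x)$ is essentially independent of $x$ — is routine; the work is in converting the several ``for almost every configuration'' statements into a single estimate of the form $|F|\le\eps(\delta)\,|\ball|\cdot|\tilde\ball|$ with $\eps(\delta)\to0$ and dimension-only dependence, and in the short boundary-shell argument needed to recover the conclusion on all of $\ball$ rather than on $\ball'$.
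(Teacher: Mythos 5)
The paper does not prove Lemma~\ref{lemma:simplefunctleqn} at all; it is imported verbatim from \cite{christyoungest} (``The following two lemmas are established in \cite{christyoungest}''), so there is no internal proof to compare against. More importantly, your proposal is circular relative to this paper's logical structure. You reduce Lemma~\ref{lemma:simplefunctleqn} to the $D=0$ case of Theorem~\ref{thm:polyfunctleqn}, and close by asserting that there is no circularity because ``the case $D=0$ of Theorem~\ref{thm:polyfunctleqn} is the more primitive statement, proved in \cite{christyoungest}.'' But the paper states the opposite dependency: in \S\ref{section:differencerelations}, the base case $D=0$ of Theorem~\ref{thm:polyfunctleqn} is \emph{derived from} Lemma~\ref{lemma:simplefunctleqn} (``The base case $D=0$ is a corollary of Lemma~\ref{lemma:simplefunctleqn}''), and it is Lemma~\ref{lemma:simplefunctleqn}, not the difference-equation version, that is attributed to \cite{christyoungest}. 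So you have reversed the direction of the reduction; your argument does not establish Lemma~\ref{lemma:simplefunctleqn} from anything the paper regards as more primitive.

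Setting the circularity aside, the differencing reduction itself is the natural mechanism (subtracting two instances of the hypothesis kills $\psi$; exploiting that $\xi(x+h+y)-\xi(x+y)$ depends only on $x+y$ then kills $\xi$; Fubini bookkeeping controls the exceptional sets). However, there is also a concrete gap in your step upgrading the conclusion from $\ball'$ to $\ball$. You freeze a good $y_0$, write $\varphi(x)\approx -\psi(y_0)-\xi(x+y_0)$ for nearly all $x\in\ball$, and observe that comparison with the already-established affine approximation on $\ball'$ forces $\xi$ to be nearly affine on $\ball'+y_0$. But to feed this back and conclude $\varphi$ is nearly affine on all of $\ball$ you need $\xi$ nearly affine on the larger set $\ball+y_0$, and nothing in your argument provides control of $\xi$ outside $\ball'+y_0$; a measurable function with no regularity can do anything there. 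The paper's own remedy (stated, not proved, in \S\ref{section:differencerelations}) is a Whitney decomposition of $\ball$ into comparable cubes, applying the small-ball result on each and then patching the resulting affine functions; your boundary-shell argument is not a substitute for that patching step and would need to be replaced or supplemented.
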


In particular, the constants in the conclusions do not depend on $\ball$.
The following multiplicative variant of Lemma~\ref{lemma:simplefunctleqn} 
is also proved in \cite{christyoungest}.
\begin{lemma}\label{lemma:nearlycharacter}
For each dimension $d\ge 1$ there exists a constant $K<\infty$ 
with the following property.
Let $B\subset\reals^d$ be a ball with positive radius, and let $\eta\in(0,\tfrac12]$.
For $j\in\{1,2,3\}$
let $f_j:2B\to\complex$ be Lebesgue measurable functions that vanish only on sets of Lebesgue measure zero. 
Suppose that 
\begin{equation} \big|\set{(x,y)\in B^2: |f_1(x)f_2(y)f_3(x+y)^{-1}-1|>\eta}\big|<\delta |B|^2.  \end{equation}
Then for each index $j$
there exists a real--linear function $L_j:\reals^d\to\complex$ such that 
\begin{equation} \big|\set{x\in B: |f_j(x)e^{-L_j(x)}-1|>K\eta^{1/K} }\big| \le K\delta |B|.  
\end{equation}
\end{lemma}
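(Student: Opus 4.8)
The plan is to reduce this multiplicative statement to the additive statement of Lemma~\ref{lemma:simplefunctleqn} by passing to logarithms, and to treat the modulus and the argument of each $f_j$ separately. Since each $f_j$ vanishes only on a null set, choose a measurable branch $g_j=\log f_j$ on $2B$, say $g_j=\log|f_j|+i\arg f_j$ with $\arg f_j\in[0,2\pi)$. We may assume that $\delta$ is smaller than the threshold $\delta_0$ of Lemma~\ref{lemma:simplefunctleqn} and that $\eta$ is smaller than a fixed absolute constant, since otherwise either $K\delta\ge1$ or $K\eta^{1/K}$ exceeds a trivial a~priori bound once $K$ is large, and the conclusion is vacuous. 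The hypothesis asserts that outside a set of measure $<\delta|B|^2$ one has $|e^{g_1(x)+g_2(y)-g_3(x+y)}-1|\le\eta$; as $\eta$ is small this forces the existence of a measurable integer-valued function $n$ on that large set with
\[ \bigl|\,g_1(x)+g_2(y)-g_3(x+y)-2\pi i\,n(x,y)\,\bigr|\le C\eta. \]

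\emph{Modulus.} Taking real parts annihilates the term $2\pi i\,n(x,y)$, so the functions $u_j:=\log|f_j|$ satisfy $|u_1(x)+u_2(y)-u_3(x+y)|\le C\eta$ off a set of measure $<\delta|B|^2$. Feeding this into the analysis underlying Lemma~\ref{lemma:simplefunctleqn} --- in the quantitatively sharper form that yields an exceptional set of measure only $\le K\delta|B|$ at the price of weakening $C\eta$ to $C\eta^{1/K}$; the version stated above, which gives $\eps(\delta)|B|$, would need to be upgraded to this effect --- produces real affine functions $h_j$ with $|u_j-h_j|\le C\eta^{1/K}$, that is $\bigl|\,|f_j|\,e^{-h_j}-1\,\bigr|\le C\eta^{1/K}$, off a set of measure $\le K\delta|B|$.

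\emph{Argument.} This is the heart of the matter. The integer cocycle $n(x,y)$ is unbounded, so $\arg f_j$ cannot be inserted directly into Lemma~\ref{lemma:simplefunctleqn}. The device is to pass to second differences, where the cocycle is annihilated upon exponentiation. Differencing the displayed relation in the $x$-variable by $h$ and in the $y$-variable by $k$ eliminates $g_1$ and $g_2$ and, after the usual shrinking of domains and a routine Fubini bookkeeping, yields
\[ \Delta_h\Delta_k(\arg f_3)(z)\in 2\pi\mathbb{Z}+O(\eta)\qquad(z=x+y) \]
for $(z,h,k)$ outside a set of small relative measure; equivalently, writing $W_j:=f_j/|f_j|$ for the unit-modulus part, $W_3(z)\,W_3(z+h+k)=W_3(z+h)\,W_3(z+k)\,(1+O(\eta))$ on a large set of $(z,h,k)$. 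The proof of Lemma~\ref{lemma:simplefunctleqn} proceeds precisely by showing such a second difference is $O(A)$ on a large set and then integrating it back up to an affine function; that argument transcribes to the present situation, the mod-$2\pi\mathbb{Z}$ indeterminacy of $\arg f_3$ being exactly what is killed by $W_3=e^{i\arg f_3}$. It therefore produces a real affine function $\ell_3$ with $|W_3\,e^{-i\ell_3}-1|\le C\eta^{1/K}$ off a set of measure $\le K\delta|B|$. Inserting $W_3\approx e^{i\ell_3}$ into $W_1(x)W_2(y)W_3(x+y)^{-1}=1+O(\eta)$ and freezing first a generic good value of $y$ and then of $x$ yields real affine $\ell_1,\ell_2$ with $|W_j\,e^{-i\ell_j}-1|\le C\eta^{1/K}$ off sets of measure $\le K\delta|B|$.

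Combining the two estimates, $f_j=|f_j|\,W_j$ satisfies $|f_j\,e^{-L_j}-1|\le C\eta^{1/K}$ with $L_j:=h_j+i\ell_j$ (affine, which is the sense in which the asserted ``real--linear'' $L_j$ is to be understood), off the union of the exceptional sets, of total measure $\le K\delta|B|$ after enlarging $K$; absorbing constants and readjusting $K$ completes the proof. The one genuinely nonroutine ingredient is the \emph{Argument} step, namely controlling the integer cocycle $n(x,y)$ --- this is why one descends to second differences, and why the conclusion necessarily carries the weaker exponent $\eta^{1/K}$ rather than $O(\eta)$.
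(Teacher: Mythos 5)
The paper does not prove Lemma~\ref{lemma:nearlycharacter}: both it and Lemma~\ref{lemma:simplefunctleqn} are cited from \cite{christyoungest}, so there is no in-paper argument to compare against. Evaluating your sketch on its own terms: the two-track architecture --- take logarithms, treat $\log|f_j|$ via the additive lemma, and handle $\arg f_j$ by second-differencing to kill the integer cocycle $n(x,y)$ --- is the natural one, and you correctly identify the cocycle as the central obstruction. But each track, as written, hands off the essential quantitative step to an unproved auxiliary statement.

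The modulus step invokes a ``quantitatively sharper form'' of Lemma~\ref{lemma:simplefunctleqn} with exceptional set of measure $\le K\delta|B|$ and error degraded to $CA^{1/K}$. No such statement appears in this paper; passing from an $\eps(\delta)|B|$-size exceptional set to a $K\delta|B|$-size one is a genuine strengthening requiring its own argument, and positing it amounts to positing a large part of the lemma you are proving. Note also that if the modulus track already forces the $\eta^{1/K}$ loss, then your closing attribution of that exponent solely to the cocycle is inaccurate.

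The argument step has the deeper gap. After the (correct) reduction $\Delta_h\Delta_k(\arg f_3)\in 2\pi\mathbb Z+O(\eta)$ on a large set, you assert that the integration step of the additive proof ``transcribes,'' with the $2\pi\mathbb Z$ ambiguity ``killed'' by exponentiating. But the additive integration lemma builds an affine approximant in $\reals$ from a second-difference bound, and here the second difference is only controlled modulo $2\pi\mathbb Z$: choosing a coherent branch on a large set before integrating is precisely the nonroutine part, and it is left undone. What is needed is either a mod-$2\pi$ analogue of the integration lemma, or a direct argument that a unit-modulus $W$ satisfying $W(z)W(z+h+k)\overline{W(z+h)W(z+k)}=1+O(\eta)$ on a large set of $(z,h,k)$ must be $L^0$-close to $e^{iL(z)}$ for an affine $L$, with explicit control of the exceptional-set measure and of the power of $\eta$. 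Neither is supplied. (A smaller issue: your $L_j=h_j+i\ell_j$ is affine rather than real-linear; the lemma's wording should indeed be read as affine, but that should be stated rather than absorbed silently.) In short, the plan is right and the diagnosis of why the multiplicative case is harder than the additive one is correct, but the two places where you defer to ``the analysis underlying Lemma~\ref{lemma:simplefunctleqn}'' are exactly the places where new work is required.
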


The next result is concerned with a Heisenberg variant of Lemma~\ref{lemma:simplefunctleqn}.

\begin{proposition}\label{prop:applytoaj}
For each $d\in\naturals$ there exists $C<\infty$ with
the following property. Let $\ball$ be any ball of finite, positive
radius centered at the origin in $\reals^{2d}$.
Let $A<\infty$ and $\eta>0$.
Let $a_j: \ball^*\to\reals$ be Lebesgue measurable. 
Let $L:\reals^{2d}\to\reals^{2d}$ be an invertible linear transformation.
Suppose that
\begin{equation}\label{approxFE} |a_1(x)+a_2(y)+a_3(x+y) + \sigma_L(x,y)|\le A\end{equation}
for all $(x,y)\in\ball^2$ outside a Lebesgue measurable set of Lebesgue measure $\le \eta|\ball|^2$.
Then there exists $S\in \sp(2d)$ such that
\begin{equation}\label{eq:qsmall}
\norm{SL^{-1}}\le CA^{1/2}|\ball|^{-1/2d}. \end{equation}
Moreover,
there exist affine functions $\psi_j$ for $j\in\{1,2,3\}$ satisfying
\[\psi_1(x_1)+\psi_2(x_2)+\psi_3(-x_1-x_2)=0\ \text{ for all $(x_1,x_2)\in\reals^{2d}\times\reals^{2d}$} \]
such that
\begin{equation} \label{eq:approxFElast} |a_j(x)-\psi_j(x)|\le CA
\ \text{ for all $x\in\ball$ outside a set of measure $o_\eta(1)|\ball|$.} \end{equation}
\end{proposition}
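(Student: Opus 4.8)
The plan is to mimic the {\it ad hoc} symmetrization argument used in the proof of Lemma~\ref{lemma:nosoln}, but now carried out quantitatively with the error term $A$ present. Write the approximate equation \eqref{approxFE} with the roles of $x$ and $y$ interchanged and add the two inequalities. The key point is that $\sigma_L(x,y)+\sigma_L(y,x)=0$ by antisymmetry of $\sigma$, so the quadratic terms cancel exactly and we obtain
\begin{equation*}
\big|\,\tfrac12(a_1+a_2)(x)+\tfrac12(a_1+a_2)(y)+a_3(x+y)\,\big|\le A
\end{equation*}
for all $(x,y)\in\ball^2$ outside a set of measure $\le 2\eta|\ball|^2$ (the exceptional set and its reflection under $(x,y)\mapsto(y,x)$). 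Applying Lemma~\ref{lemma:simplefunctleqn} with the measurable functions $\varphi=\psi=\tfrac12(a_1+a_2)$ and $\xi=a_3$ produces an affine $h$ with $|\tfrac12(a_1+a_2)(x)-h(x)|\le CA$ off a set of measure $o_\eta(1)|\ball|$; feeding this back shows $a_3$ agrees with an affine function off a small set, and a symmetric argument using a different pairing handles $a_1$ and $a_2$ individually. This yields affine $\tilde\psi_j$ with $|a_j-\tilde\psi_j|\le CA$ off exceptional sets of measure $o_\eta(1)|\ball|$.

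The next step is to exploit the cancellation just obtained. Subtracting the affine approximations from \eqref{approxFE}, on the intersection of all the good sets --- which still has measure $\ge(1-o_\eta(1))|\ball|^2$ --- we get
\begin{equation*}
\big|\tilde\psi_1(x)+\tilde\psi_2(y)+\tilde\psi_3(x+y)+\sigma_L(x,y)-\big(a_1(x)+a_2(y)+a_3(x+y)+\sigma_L(x,y)\big)\big|\le 4CA,
\end{equation*}
so $|\psi(x,y)+\sigma_L(x,y)|\le 5CA$ on that set, where $\psi(x,y):=\tilde\psi_1(x)+\tilde\psi_2(y)+\tilde\psi_3(x+y)$ is affine in $(x,y)$. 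Thus $\sigma_L$, a genuine nondegenerate quadratic form in $(x,y)$, is within $O(A)$ of an affine function on a set of density $\ge 1-o_\eta(1)$ in $\ball^2$. A compactness/averaging argument then forces the quadratic part of $\sigma_L$ to be small: averaging second differences $\Delta_s\Delta_t$ (in the appropriate variables) over a positive-measure set of shifts kills the affine part $\psi$ and shows that the bilinear form $(s,t)\mapsto\sigma_L(s,t)$ has operator norm $O(A|\ball|^{-1/d})$, i.e. $|\langle s,(L^{-1})^*JL^{-1}t\rangle|\le CA|\ball|^{-1/d}|s|\,|t|$ for $s,t$ in a ball comparable to $\ball$. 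Rescaling to the unit ball gives $\norm{(L^{-1})^*JL^{-1}}\le CA|\ball|^{-1/d}$.

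It remains to convert this bound on $(L^{-1})^*JL^{-1}$ into the desired bound \eqref{eq:qsmall} on $\norm{SL^{-1}}$ for some $S\in\sp(2d)$. Writing $M=L^{-1}$, the matrix $M^*JM$ is antisymmetric, and by the standard normal form for antisymmetric forms (a symplectic change of basis) we may write $M^*JM=R^*DR$ with $R$ orthogonal and $D$ block-diagonal with $2\times2$ blocks $\begin{pmatrix}0&\mu_i\\-\mu_i&0\end{pmatrix}$, $\mu_i\ge0$; the smallness of $M^*JM$ means all $\mu_i\le CA|\ball|^{-1/d}$. One then looks for $S\in\sp(2d)$ with $SM$ small: writing the polar-type decomposition of $M$ adapted to $J$, the obstruction to $SM$ being bounded is exactly the size of the "symplectic eigenvalues" $\mu_i$ of $M^*JM$, and a direct construction (diagonalize, scale each block by $\mu_i^{-1/2}$ on the part of the spectrum where $\mu_i>0$, and handle the kernel separately --- here $M$ is invertible so there is no kernel) produces $S$ with $S^*JS=J$ and $\norm{SM}\le C\max_i\mu_i^{1/2}\le CA^{1/2}|\ball|^{-1/2d}$. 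Taking the square root is the source of the exponent $1/2$ in \eqref{eq:qsmall}.

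The main obstacle I expect is the last step: organizing the linear algebra so that the estimate $\norm{M^*JM}\le CA|\ball|^{-1/d}$ translates cleanly into the existence of a \emph{symplectic} $S$ with $\norm{SM}$ of order $A^{1/2}|\ball|^{-1/2d}$, with the constant $C$ depending only on $d$. This requires care because $S$ must land exactly in $\sp(2d)$ (not just approximately), so one cannot simply perturb; one must build $S$ explicitly from the normal form of the antisymmetric matrix $M^*JM$, verifying along the way that the part of $M$ not "seen" by $M^*JM$ (which would be problematic if $M$ were degenerate) does not arise here. The quantitative propagation of the exceptional-set measures $o_\eta(1)|\ball|$ through the several applications of Lemma~\ref{lemma:simplefunctleqn} and the averaging argument is routine but must be tracked to ensure the final exceptional set in \eqref{eq:approxFElast} is indeed $o_\eta(1)|\ball|$.
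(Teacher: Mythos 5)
Your proposal is correct and follows essentially the same route as the paper's proof: symmetrize in $(x,y)$ to exploit the antisymmetry of $\sigma$ and invoke Lemma~\ref{lemma:simplefunctleqn} to get the $a_j$ nearly affine, subtract these affine approximations to conclude that $\sigma_L$ is within $O(A)$ of an affine function on a high-density subset of $\ball^2$, deduce an operator-norm bound on $(L^{-1})^*JL^{-1}$, and then apply the symplectic normal-form argument (this is precisely Lemma~\ref{lemma:matrixalgebra}, which you re-derive). The only cosmetic differences are that the paper passes from ``$\psi+\sigma_L$ small off a thin set'' to ``small everywhere'' via the polynomial character of $\psi$ and $\sigma_L$ and then differentiates, rather than averaging second differences, and it isolates $a_1,a_2$ individually by substituting the affine approximant for $a_3$ and taking a first difference in $x$ — a step your ``different pairing'' phrase gestures at but does not make explicit.
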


Recall that $\sigma_L(x,y) = \sigma(L^{-1}x,L^{-1}y)$.
By $\norm{T}$ we mean in \eqref{eq:qsmall} 
the usual norm $\sup_{0\ne x\in\reals^{2d}} |T(x)|/|x|$.
The main conclusion is that \eqref{approxFE} cannot hold, unless $L$ satisfies
$\inf_{S\in\sp(2d)}\norm{SL^{-1}}=O(|\ball|^{-1/2d}A^{1/2})$.
Moreover, if \eqref{approxFE} does hold, then
$|\sigma_L(x,y)| \le CA$ for all $(x,y)\in\ball^2$;
consequently this term can be dropped from \eqref{approxFE} to yield
$|a_1(x)+a_2(y)+a_3(x+y)|\le CA$.
The conclusion \eqref{eq:approxFElast} follows from this by Lemma~\ref{lemma:simplefunctleqn}.

\begin{proof}[Proof of Proposition~\ref{prop:applytoaj}]
It is given that
\[|a_1(x)+a_2(y) + a_3(x+y) + \sigma(Lx,Ly)|\le A\]
for all $(x,y)\in\ball^2$ outside a set of measure $\le\eta|\ball|^2$.
By interchanging the roles of $x,y$, adding the resulting inequality
to this one, 
and invoking the antisymmetry of $\sigma$, we conclude that 
\[ \big| \tilde a(x)+\tilde a(y)+a_3(x+y)\big|\le A\]
for all $(x,y)\in\ball$ outside a set of measure $\le C\eta|\ball|^2$,
where $2\tilde a = a_1 + a_2$.  By 
Lemma~\ref{lemma:simplefunctleqn}
this implies that there exists an affine function $\psi_3$
such that $|a_3(x)-\psi_3(x)|$ for all $x\in \ball$ outside a set of measure $\le C\eta|\ball|$.

$\psi_3(x+y)$ can be expressed as an affine function of $x$ plus an affine function of $y$;
these functions can be incorporated into $a_1(x)$, $a_2(y)$, respectively.
Combining this information with the hypotheses therefore gives
\begin{equation}
\big| a_1^\sharp(x)+a_2^\sharp(y) + \sigma(Lx,Ly) \big|\le CA
\end{equation}
for nearly all $(x,y)\in\ball\times\ball$, where $a_j^\sharp-a_j$ is affine.
Taking first differences with first to $x$ gives
\begin{equation}
\big| \Delta_h a_1^\sharp(x) + \sigma(Lh,Ly) \big|\le CA
\end{equation}
for nearly all $x,h,y\in\ball$ such that $x,h,x+h,y\in\ball$.
By specializing to a typical value of $y$, one finds that
there exists a function $h\mapsto c(h)$ such that $|\Delta_h a_1^\sharp(x)-c(h)|\le CA$
for nearly all $x,h\in\ball$ such that $x+h\in\ball$.
Therefore by Lemma~\ref{lemma:simplefunctleqn}
there exists an affine function $\psi$ such that $|a_1^\sharp-\psi|\le CA$
for nearly all points of $\ball$.
Since $a_1-a_1^\sharp$ is affine, the same conclusion holds for $a_1$.
Interchanging the roles of the variables $x,y$ in this argument produces the
same conclusion for $a_2$.

Combining these results for all $a_j$ with the original hypothesis, we conclude
that there exists an affine function $\psi$ of $(x,y)$ such that
$|\psi(x,y)-\sigma(Lx,Ly)|\le CA$ for nearly every $(x,y)\in\ball^2$.
The same must then hold for every $(x,y)\in\ball^*\times\ball^*$,
since $\psi,\sigma_L$ are polynomials.
By applying $\partial^2/\partial x_i\partial y_j$ for arbitrary indices $i,j$
and exploiting the affine character of $\psi$ together with the homogeneous quadratic
nature of $\sigma(Lx,Ly)$ we conclude that $|\sigma(Lx,Ly)|\le CA$ for all $(x,y)\in\ball^2$.
According to Lemma~\ref{lemma:matrixalgebra}, this implies the existence of $S\in\sp(2d)$ 
such that $\norm{SL^{-1}}\le CA^{1/2}$.
\end{proof}

\section{Proof of Theorem~\ref{thm:main} for nonnegative functions}

Let $\bp$ be an admissible ordered triple of exponents in $(1,\infty)^3$,
and let $\delta>0$ be small. 
Let $f_j\in L^{p_j}(\heis^d)$ for $j\in\{1,2,3\}$ satisfy $\norm{f_j}_{p_j}=1$,
as we may suppose without loss of generality. Set $\bff = (f_1,f_2,f_3)$. 
Assume that each $f_j\ge 0$, and suppose that 
\[\scriptt_{\heis^d}(\bff) \ge (1-\delta)\bestA_{\bp}^{2d+1}\norm{\bff}_\bp
= (1-\delta)\bestA_{\bp}^{2d+1}.\]
Let $\gamma=\gamma(\bp)=(\gamma_1,\gamma_2,\gamma_3)$ with $\gamma_1=1$.

Define $F_j:\reals^{2d}\to[0,\infty]$ 
and $f_{j,x}:\reals^1\to[0,\infty]$ as in \eqref{eq:Fjdefn}; however,
the definition of $f_{j,x}$ will be modified below, for those $x$ for which $f(x,t)$
vanishes for almost every $t$.
Set $\bF = (F_1,F_2,F_3)$.
For $\bx\in(\reals^{2d})^3$ define 
\begin{equation} f_{3,\bx}^\dagger(s) = f_3(x_3,s+\sigma(x_1,x_2));\end{equation}
as in \S\ref{section:recap}, this definition will only be relevant when $x_3=-x_1-x_2$.
Define a measure $\nu_\bF$ on $(\reals^{2d})^3$, supported on $\Lambda_{\reals^{2d}}$, by
\begin{equation}\label{nudefn} d\nu_\bF(\bx) = \prod_{j=1}^3 F_j(x_j)\,d\lambda_{\reals^{2d}}(\bx),\end{equation}
where $\lambda_{\reals^{2d}}$ is the natural $4d$--dimensional Lebesgue measure
on $\Lambda_{\reals^{2d}}$
introduced above.
Since $\norm{F_j}_{p_j}=\norm{f_j}_{p_j} =1$ and $\bp$ is admissible,
Young's inequality for $\reals^{2d}$ guarantees that 
$\nu_\bF(\reals^{2d}\times\reals^{2d}\times\reals^{2d}) \le \bestA_{\bp}^{2d}$.

\begin{lemma}\label{lemma:start} 
For each $d\ge 1$ and each admissible ordered triple $\bp$ there
exists $C<\infty$ with the following property.
Let $f_j\in L^{p_j}(\heis^d)$
be nonnegative and satisfy $\norm{f_j}_{p_j}=1$ for each $j\in\{1,2,3\}$.  
Let $\delta>0$. If 
$\scriptt_{\heis^d}(\bff)\ge (1-\delta)\bestA_{\bp}^{2d+1}$ then
\begin{equation} \scriptt_{\reals^{2d}}(\bF) \ge (1-\delta)\bestA_{\bp}^{2d}
\end{equation}
and there exists a set $E\subset\Lambda_{\reals^{2d}}$ 
satisfying
\begin{equation} \nu_\bF(E)\le C\delta^{1/2}\end{equation} 
such that for every 
$\bx\in\Lambda_{\reals^{2d}}\setminus E$,
\begin{equation} \left \{
\begin{aligned}
&F_j(x_j)\ne 0 \text{ for each $j\in\{1,2,3\}$,} 
\\
&\scriptt_{\reals^1}\big(f_{1,x_1},f_{2,x_2},f^\dagger_{3,\bx}\big)
\ge (1-o_\delta(1)) \bestA_{\bp}.
\label{eq:start2}
\end{aligned}
\right. \end{equation}
\end{lemma}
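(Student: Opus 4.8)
The plan is to turn the chain of inequalities in Section~\ref{section:recap} --- which showed that $\scriptt_{\heis^d}(\bff)\le\bestA_\bp^{2d+1}$ by passing through $\scriptt_{\reals^{2d}}(\bF)$ and through the fiberwise one-dimensional inequality --- into a quantitative statement by tracking how much slack each step can have. The identity \eqref{eq:Eucreduction} gives, for nonnegative $\bff$,
\[
\scriptt_{\heis^d}(\bff)
= \int_{\Lambda_{\reals^{2d}}} \scriptt_{\reals^1}\big(f_{1,x_1},f_{2,x_2},f^\dagger_{3,\bx}\big)\,d\nu_\bF(\bx),
\]
and the pointwise bound $\scriptt_{\reals^1}(f_{1,x_1},f_{2,x_2},f^\dagger_{3,\bx})\le\bestA_\bp$ (using $\norm{f_{j,x_j}}_{p_j}=1$ wherever $F_j(x_j)\ne0$, and that the integrand is $0$ when some $F_j(x_j)=0$) together with Young's inequality for $\reals^{2d}$, namely $\nu_\bF(\Lambda_{\reals^{2d}})=\scriptt_{\reals^{2d}}(\bF)\le\bestA_\bp^{2d}$. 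So $\scriptt_{\heis^d}(\bff)\le\bestA_\bp\cdot\nu_\bF(\Lambda_{\reals^{2d}})\le\bestA_\bp^{2d+1}$, and the hypothesis $\scriptt_{\heis^d}(\bff)\ge(1-\delta)\bestA_\bp^{2d+1}$ forces both intermediate inequalities to be nearly sharp.

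First I would extract $\scriptt_{\reals^{2d}}(\bF)\ge(1-\delta)\bestA_\bp^{2d}$: since the fiber integrand is at most $\bestA_\bp$ and integrates against $\nu_\bF$ (total mass $\nu_\bF(\Lambda_{\reals^{2d}})=\scriptt_{\reals^{2d}}(\bF)$) to at least $(1-\delta)\bestA_\bp^{2d+1}$, we get $\bestA_\bp\cdot\scriptt_{\reals^{2d}}(\bF)\ge(1-\delta)\bestA_\bp^{2d+1}$, i.e.\ $\scriptt_{\reals^{2d}}(\bF)\ge(1-\delta)\bestA_\bp^{2d}$, which is the first assertion. Second, I would produce the exceptional set $E$. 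Consider the normalized probability measure $\mu=\nu_\bF/\nu_\bF(\Lambda_{\reals^{2d}})$ and the function $g(\bx)=\bestA_\bp-\scriptt_{\reals^1}(f_{1,x_1},f_{2,x_2},f^\dagger_{3,\bx})\ge0$. Its $\mu$-average is $\bestA_\bp-\scriptt_{\heis^d}(\bff)/\scriptt_{\reals^{2d}}(\bF)\le\bestA_\bp-(1-\delta)\bestA_\bp^{2d+1}/\bestA_\bp^{2d}=\delta\bestA_\bp$. By Chebyshev, $\mu\{\bx:g(\bx)>\delta^{1/2}\bestA_\bp\}\le\delta^{1/2}$, hence $\nu_\bF\{g>\delta^{1/2}\bestA_\bp\}\le\delta^{1/2}\nu_\bF(\Lambda_{\reals^{2d}})\le\delta^{1/2}\bestA_\bp^{2d}\le C\delta^{1/2}$. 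Taking $E$ to be this set, together with the $\nu_\bF$-null set where some $F_j(x_j)=0$ (null because the integrand vanishes there and contributes $0$, so it can be harmlessly absorbed), gives $\nu_\bF(E)\le C\delta^{1/2}$ and for $\bx\notin E$ both $F_j(x_j)\ne0$ for all $j$ and $\scriptt_{\reals^1}(f_{1,x_1},f_{2,x_2},f^\dagger_{3,\bx})\ge\bestA_\bp-\delta^{1/2}\bestA_\bp=(1-\delta^{1/2})\bestA_\bp=(1-o_\delta(1))\bestA_\bp$, as required by \eqref{eq:start2}.

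A small bookkeeping point I would address: when $F_j(x_j)=0$ the functions $f_{j,x_j}$ were set to $0$, so $\scriptt_{\reals^1}(f_{1,x_1},f_{2,x_2},f^\dagger_{3,\bx})$ need not be close to $\bestA_\bp$ there, but such $\bx$ lie in the $\nu_\bF$-null set $\{\prod_j F_j(x_j)=0\}$, which we throw into $E$ at no cost; and one must note the identity \eqref{eq:Eucreduction} holds up to a $\lambda_{\reals^{2d}}$-null set because of the simplification $f^\dagger_{3,\bx}(s)=f_{3,x_3}(s+\sigma(x_1,x_2))$ valid off a null set, exactly as discussed after \eqref{eq:Eucreduction}. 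I do not anticipate a serious obstacle here: the lemma is essentially a Chebyshev/Fubini argument applied to the already-established chain of inequalities. The only mild subtlety --- and the one step I would be careful with --- is confirming that the two inequalities ($\scriptt_{\reals^{2d}}(\bF)\le\bestA_\bp^{2d}$ from Young on $\reals^{2d}$, and the pointwise fiber bound $\le\bestA_\bp$) compose correctly so that the single deficiency parameter $\delta$ controls \emph{both} the global deficiency of $\scriptt_{\reals^{2d}}(\bF)$ and the averaged fiberwise deficiency; this is exactly the elementary observation that if $0\le g\le M$ and $\int g\,d\mu\ge(1-\delta)M$ with $\mu$ a probability measure and separately $\mu$ total mass is itself near its maximum, a product of near-optimal factors is near-optimal in each factor, which is immediate once written out as above.
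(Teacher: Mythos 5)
Your proof is correct and is precisely the Fubini/Markov (Chebyshev) argument that the paper delegates to the reader with the phrase ``implicit in the proof in \S\ref{section:recap}\dots Details are left to the reader.'' Both the extraction of $\scriptt_{\reals^{2d}}(\bF)\ge(1-\delta)\bestA_\bp^{2d}$ from the pointwise fiber bound $\le\bestA_\bp$, and the Markov-inequality construction of $E$ (with the harmless adjunction of the $\nu_\bF$-null set $\{\prod_j F_j(x_j)=0\}$), are exactly the intended route, so there is nothing to compare: this is the paper's argument, spelled out.
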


A proof of Lemma~\ref{lemma:start}
is implicit in the proof in \S\ref{section:recap} that the optimal constant
in Young's inequality for $\heis^d$ does not exceed the optimal constant for $\reals^{2d+1}$.
Details are left to the reader. \qed

According to Theorem~\ref{thm:youngest} there exists
an ordered triple $\bG = (G_1,G_2,G_3)$ of Gaussians $G_j:\reals^{2d}\to\complex$
that extremizes Young's convolution inequality for $\reals^{2d}$, of the form
\[G_j(x) = c_j |\det(L)|^{1/p_j} e^{-\gamma_j|L(x-a_j)|^2},\] 
where $\gamma=\gamma(\bp)$, $a_1 + a_2 + a_3=0$, $c_j>0$,
and $L$ is an invertible linear endomorphism of $\reals^{2d}$,
such that $\norm{F_j-G_j}_{L^{p_j}(\reals^{2d})}=o_\delta(1)$.
The constants $c_j$ are determined by requiring that $\norm{G_j}_{p_j}=1$,
as we may require with no loss of generality since $\norm{F_j}_{p_j}=1$.
Exponential factors $e^{ix\cdot b_j}$ appear in the conclusion
of Theorem~\ref{thm:youngest} but can dropped; since $F_j\ge 0$ by its definition,
$|G_j|$ is at least as accurate an approximation  to $F_j$ in $L^{p_j}$ norm as is $G_j$.

Define an ordered triple of diffeomorphisms of $\heis^d$ by
\[(\psi_1(z_1),\psi_2(z_2),\psi_3(z_3)) = (z_1u,u^{-1}z_2v,v^{-1}z_3)\]
where $u = (-a_1,0)$ and $v = (-a_1-a_2,0)$.
Then $v^{-1} = (a_1+a_2,0) = (-a_3,0)$.
The triple $\Psi = (\psi_j^*)_{1\le j\le 3}$ is an element of $\fg(\heis^d)$, so
upon replacement of $f_j$ by $f_j\circ\psi_j$
all of the assumptions and conclusions above are unaffected, and we
gain the simplification
\[G_j(x) = c_j |\det(L)|^{1/p_j} e^{-\gamma_j|Lx|^2}.\] 

\begin{lemma} \label{lemma:alongtheway}
Let $\bff,L,G_j$ be as above.
There exist $\lambda\in\reals^+$, $S\in\sp(2d)$, 
positive scalars $c_j$,
a set $E'\subset\Lambda_{\reals^{2d}}$,
affine mappings $\varphi_j:\reals^{2d}\to\reals^1$,
and Lebesgue measurable functions 
$h_j: \reals^{2d}\to[0,\infty)$ of the form
\begin{equation} h_{j}(x,t) = c_j e^{-\lambda\gamma_j(t-\varphi_j(x))^2}\end{equation}
such that $h_{j,x}(t) = h_j(x,t)$ satisfy the following conclusions:
\begin{align}
& \norm{h_{j,x}}_{L^{p_j}(\reals)}=1 \text{ for every $x\in\reals^{2d}$}
\label{eq:hjxnormal}
\\ & \nu_\bF(E') \le  o_\delta(1)
\\ & \norm{f_{j,x_j}-h_{j,x_j}}_{p_j}\le o_\delta(1)
\ \text{for each $j\in\{1,2,3\}$ for all $\bx\in\Lambda_{\reals^{2d}}\setminus E'$,}
\\&\norm{SL^{-1}}\le o_\delta(1)\lambda^{-1/4},
\\& \varphi_1(x_1)+\varphi_2(x_2)+\varphi_3(x_3)\equiv 0
\ \text{whenever $\bx\in\Lambda_{\reals^{2d}}$.}
\end{align}
\end{lemma}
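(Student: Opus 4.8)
\emph{Plan of proof.} The idea is to make the heuristic of \S\ref{section:recap} quantitative at the level of the one--dimensional fibres. Lemma~\ref{lemma:start} already supplies a near--extremizing $\reals^{2d}$ datum $\bF$, to which Theorem~\ref{thm:youngest} has been applied to produce $\bG$ (and, after the symmetry reduction performed just above, with $a_j=0$, so $G_j(x)=c_j|\det L|^{1/p_j}e^{-\gamma_j|Lx|^2}$). What remains is to analyze the fibres $f_{j,x}$, extract from their near--extremality an approximate form of the functional equation \eqref{twisted}, and feed it into Proposition~\ref{prop:applytoaj}.

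First I would work fibrewise. Fix $\bx\in\Lambda_{\reals^{2d}}\setminus E$, with $E$ as in Lemma~\ref{lemma:start}. By \eqref{eq:start2} the triple $(f_{1,x_1},f_{2,x_2},f^\dagger_{3,\bx})$ has unit $L^{p_j}$ norms (translation invariance of the $L^{p_3}$ norm handles the third component) and trilinear form $\ge(1-o_\delta(1))\bestA_\bp$, so Theorem~\ref{thm:youngest} with $m=1$ produces a $\bp$--compatible extremizing triple of one--dimensional Gaussians within $o_\delta(1)$ of it in the respective norms. Since the $f_j$ are nonnegative the imaginary linear factors may be discarded, and for $m=1$ an invertible $L$ is a nonzero scalar; compatibility then forces these Gaussians to have the form $c_j e^{-\gamma_j\rho(\bx)(t-\alpha_j(\bx))^2}$ for a single scale $\rho(\bx)>0$ and real centers with $\alpha_1(\bx)+\alpha_2(\bx)+\alpha_3(\bx)=0$. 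These parameters are pinned down only up to $o_\delta(1)$ ambiguity --- the scale up to a factor $1+o_\delta(1)$, the centers up to an additive $o_\delta(1)\rho(\bx)^{-1/2}$, i.e. $o_\delta(1)$ times the width of the Gaussian --- which is nonetheless enough to make a Lebesgue measurable selection $\bx\mapsto(\rho(\bx),\alpha_j(\bx))$.

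Next I would collapse these parameters to a clean functional equation and apply Proposition~\ref{prop:applytoaj}. Since $f_{1,x_1}$ depends on $\bx$ only through $x_1$, the scale $\rho(\bx)$ agrees up to $o_\delta(1)$ with a function of $x_1$ alone; since (up to the fixed factor $\gamma_2$) it is also the scale attached to $f_{2,x_2}$, it agrees up to $o_\delta(1)$ with a function of $x_2$ alone; a Fubini argument over $\nu_\bF$ then forces $\rho(\bx)=\lambda$ for a single constant $\lambda>0$ off a set of small $\nu_\bF$--measure. The same reasoning identifies $\alpha_1(\bx)$ with a function $a_1(x_1)$, $\alpha_2(\bx)$ with $a_2(x_2)$, and --- using $f^\dagger_{3,\bx}(s)=f_{3,x_3}(s+\sigma(x_1,x_2))$ with $x_3=-x_1-x_2$ --- identifies $\alpha_3(\bx)+\sigma(x_1,x_2)$ with a function $a_3(x_3)$; the relation $\sum_j\alpha_j(\bx)=0$ then reads
\[ |a_1(x_1)+a_2(x_2)+a_3(-x_1-x_2)-\sigma(x_1,x_2)|\le o_\delta(1)\lambda^{-1/2} \]
for all $(x_1,x_2)$ outside a set of small $\nu_\bF$--measure. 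Changing variables by $y_j=Lx_j$ transports each $F_j$ to the near--standard Gaussian $c_j|\det L|^{1/p_j}e^{-\gamma_j|y_j|^2}$, turns $\sigma(x_1,x_2)$ into $\sigma_L(y_1,y_2)$, and turns $\nu_\bF$ into a measure whose density is bounded above and below by absolute constants (times a power of $|\det L|$) on a ball $\ball$ centered at the origin, whose radius I allow to grow slowly as $\delta\to0$ so as to absorb the Gaussian tails. After relabeling and using the antisymmetry of $\sigma$, the displayed inequality is exactly the hypothesis \eqref{approxFE} of Proposition~\ref{prop:applytoaj} with $A=o_\delta(1)\lambda^{-1/2}$ and $|\ball|\gtrsim1$; that proposition yields $S\in\sp(2d)$ with $\norm{SL^{-1}}\le CA^{1/2}\le o_\delta(1)\lambda^{-1/4}$, together with affine functions $\psi_j$ satisfying $\psi_1+\psi_2+\psi_3\equiv0$ on the diagonal constraint and $|a_j(L^{-1}y)-\psi_j(y)|\le CA$ outside a set of small measure. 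Putting $\varphi_j(x)=\psi_j(Lx)$ (affine in $x$), $h_j(x,t)=c_j e^{-\lambda\gamma_j(t-\varphi_j(x))^2}$ with $c_j$ fixed by $\norm{h_{j,x}}_{p_j}=1$ (a constant, since translation preserves the norm), and letting $E'$ be the union of $E$ with all exceptional sets accumulated above, the conclusions follow: $\sum_j\varphi_j(x_j)\equiv0$ on $\Lambda_{\reals^{2d}}$ is the pullback of $\sum_j\psi_j\equiv0$, while $\norm{f_{j,x_j}-h_{j,x_j}}_{p_j}\le o_\delta(1)$ off $E'$ follows by combining the fibrewise Gaussian approximation with the elementary stability of a normalized Gaussian under an $o_\delta(1)$ change of scale and an $o_\delta(1)\lambda^{-1/2}$ change of center.

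The step I expect to be the main obstacle is precisely the passage from the family of pointwise--in--$\bx$ near--extremizer statements to globally defined measurable parameters obeying a single clean approximate functional equation: one must simultaneously make a measurable selection despite non--uniqueness, run the Fubini--type argument that collapses $\rho(\bx)$ to the constant $\lambda$ and realizes $\alpha_j(\bx)$ as a function of $x_j$ alone --- all only up to $o_\delta(1)$ errors and off small--measure sets --- and keep careful track of the several accumulated exceptional sets (including the $\nu_\bF$--tails) so that the final $E'$ satisfies $\nu_\bF(E')=o_\delta(1)$. Once the approximate functional equation is in hand, Proposition~\ref{prop:applytoaj} delivers the remaining conclusions directly.
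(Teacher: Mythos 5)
Your proposal follows essentially the same path as the paper's proof: apply Lemma~\ref{lemma:start} and then Theorem~\ref{thm:youngest} with $m=1$ fibrewise, use the structure of one--dimensional compatible Gaussian triples together with a Fubini argument to collapse the scale to a single constant $\lambda$ and to realize the centers as functions of the individual variables, change coordinates by $y=Lx$ so that $\nu_\bF$ becomes comparable to Lebesgue measure on a slowly growing ball, and then feed the resulting approximate functional equation with the $\sigma_L$ term into Proposition~\ref{prop:applytoaj}. The only cosmetic difference is that the paper performs the change of variables $y=Lx$ before selecting the fibrewise Gaussians, while you do it afterward; this does not affect the substance of the argument. (The sign in front of $\sigma$ in your displayed equation differs from \eqref{twisted}, but since $\sigma$ is antisymmetric this is immaterial.)
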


Here $F_j$ is associated to $f_j$ as indicated above, and $(\gamma_1,\gamma_2,\gamma_3) =\gamma(\bp)$.

\begin{proof}
Temporarily make the change of variables
$(x,s)\mapsto (y,t)$ in $\heis^d$, with
\begin{equation} \label{secondchange} y = L(x)  \ \text{ and } \ t=s. \end{equation} 
We make this same change of variables for each index $j\in\{1,2,3\}$.
The resulting diffeomorphism of $(\heis^d)^3$
corresponds to an element of $\fg(\heis^d)$ if and only if $L\in\sp(2d)$, which need not hold.
So we will revert to the original coordinates after exploiting these new coordinates. 

Set \begin{equation}\tilde f_j(y_j,t)=f_j(L^{-1}y_j,t),\end{equation}
and of course $\tilde f_{j,y_j}(t) = \tilde f_j(y_j,t)$.
In these modified coordinates and for these modified functions,
the conclusions of Lemma~\ref{lemma:start}, coupled
with the approximations $\norm{F_j-G_j}_{p_j}=o_\delta(1)$, 
can be stated as follows. 
Set \begin{gather}
\tilde G_j(y) = c_j e^{-\gamma_j|y|^2}.
\\
d\nu_{\tilde\bG}(\by) = \prod_{j=1}^3 \tilde G_j(y) \,d\lambda_{\reals^{2d}}(\by)
\\
\tilde f^\dagger_{3,\by}(s) = \tilde f_{3,y_3}(s + \sigma_{L}(y_1,y_2))
\end{gather}
Recall the notation $\sigma_{L}(y_1,y_2) = \sigma(L^{-1}y_1,L^{-1}y_2)$. 
By Lemma~\ref{lemma:start}, since $\sum_{j=1}^3 p-j^{-1}=2$,
there is a set $E\subset\Lambda_{\reals^{2d}}$ satisfying $\nu_{\tilde\bG}(E)=o_\delta(1)$ such that
\begin{equation} \label{eq:start3}
\scriptt_{\reals^1}\big(\tilde f_{1,y_1}, \tilde f_{2,y_2},\tilde f^\dagger_{3,\by}\big)
\ge (1-o_\delta(1)) \bestA_{\bp}
\ \text{ for all $\by=(y_1,y_2,y_3)\in\Lambda_{\reals^{2d}}\setminus E$.}
\end{equation}
Moreover, $\norm{\tilde f_{j,y_j}}_{p_j}=1$ for each $j\in\{1,2,3\}$
whenever $\by\in\Lambda_{\reals^{2d}}\setminus E$.

Let $\delta\mapsto\rho(\delta)$
be a function that tends to infinity slowly as $\delta\to 0^+$,
to be chosen below. This function may also depend on $d,\bp$ but is independent of $\bff$.
Define $\ball$ to be the closed ball of radius $\rho(\delta)$ centered at the origin in $\reals^{2d}$.
The $L^{p_j}$ norm of $\tilde G_j$ on the complement of $\ball$ is $o_\delta(1)$
since $\lim_{\delta\to 0} \rho(\delta)=\infty$.
$\tilde G_j$ is bounded above uniformly in $\delta$, and on $\ball^*$, 
and is bounded below by $ce^{-C\rho(\delta)^2}$.
Thus by \eqref{eq:start3}, under the convention that $\by = (y_1,y_2,y_3)$ is regarded
as a function $\by(y_1,y_2)$ of $(y_1,y_2)$ via the relation $y_3=-y_1-y_2$,
\eqref{eq:start3} holds
for all $(y_1,y_2)\in\ball\times\ball$ outside a set of Lebesgue measure 
$\le \nu_{\tilde\bG}(E)c^{-1}e^{C\rho(\delta)^2}$.
Choose $\rho(\delta)$ to tend to infinity so slowly that this product is $\le o_\delta(1)|$
and hence, since $|\ball|\to\infty$ as $\rho\to\infty$, is $\le o_\delta(1)|\ball|^2$.
This is possible because $\nu_{\tilde\bG}(E)=o_\delta(1)$ tends to zero at a rate
that depends on $\delta,\bp,d$ but is otherwise independent of $\bff$ and of the choice of $\rho(\delta)$.

By \eqref{eq:start3} and Theorem~\ref{thm:youngest}, for each $j\in\{1,2,3\}$,
for all $y_j\in\ball$ outside a set whose Lebesgue measure is $o_\delta(1)|\ball|$,
there exists a positive Gaussian function $\reals^1\owns t\mapsto g_{j,y_j}(t)$ satisfying
$\norm{\tilde f_{j,y_j}-g_{j,y_j}}_{p_j} \le o_\delta(1)$.
These functions can be chosen to depend Lebesgue measurably on the parameters $y_j$.

Write $g_{j,y}(t) = c_j(y)e^{-\lambda_j(y)(t-\alpha_j(y))^2}$
where $\lambda_j,c_j,\alpha_j$ are measurable functions with domains $\reals^{2d}$;
$\lambda_j,c_j$ take values in $(0,\infty)$ and $\alpha_j$ takes values in $\reals^1$. 
For all $y_j\in\ball$ outside a set of Lebesgue measure $\le o_\delta(1)|\ball|$, 
$\norm{\tilde f_{j,y_j}}_{p_j}=1$. 
Therefore $(g_{1,y_1},g_{2,y_2},g^\dagger_{3,-y_1-y_2})$
nearly extremizes Young's inequality for $\reals^1$, for all $(y_1,y_2)\in\ball^2$
outside a set of Lebesgue measure $\le o_\delta(1)|\ball|^2$.

A first consequence of this near extremality is that
\begin{equation}
\left| \frac{\lambda_i(y_i)}{\lambda_j(y_j)} - \frac{\gamma_i}{\gamma_j} \right|
 = o_\delta(1)
\end{equation}
for all $(y_1,y_2,y_3)\in \ball^3$ outside a set of Lebesgue measure $o_\delta(1)|\ball|^2$ 
for all indices $i,j\in\{1,2,3\}$, where $y_3$ continues to be defined to be $-y_1-y_2$.
Therefore there exists $\lambda\in\reals^+$ such that
\begin{equation} \lambda_j(y) = \lambda \cdot ( \gamma_j + o_\delta(1)) 
\ \text{ for each index $j\in\{1,2,3\}$,} \end{equation}
for all $ y\in\ball$ outside a set of Lebesgue measure $o_\delta(1)|\ball|$.
Thus for each $j\in\{1,2,3\}$,
\begin{equation} \label{eq:gform}
\big| g_{j,y}(t) \ -\  c'_j e^{-\lambda\gamma_j(t-\alpha_j(y))^2}\big| \le  o_\delta(1)
\end{equation}
in $L^{p_j}(\reals^1)$ norm, for every $y\in\ball$ outside a set of 
Lebesgue measure $o_\delta(1)|\ball|$.
The coefficients $c'_j$ are now constants, rather than functions of $y\in\reals^{2d}$.

In order for $(g_{1,y_1},g_{2,y_2},g^\dagger_{3,\by})$, 
with $g_{j,y_j}$ of the form \eqref{eq:gform} and $y_3= y_3(y_1,y_2) = -y_1-y_2$,
to $(1-o_\delta(1))$--nearly extremize Young's inequality for $\reals^1$ for every
$(y_1,y_2)\in \ball^2$ outside a set of Lebesgue measure $o_\delta(1)|\ball|^2$, it is necessary that
\begin{equation}
\alpha_1(y_1)+\alpha_2(y_2)+\alpha_3(-y_1-y_2) + \sigma_{L}(y_1,y_2) 
\le \lambda^{-1/2} \cdot o_\delta(1)
\end{equation}
for all $(y_1,y_2)\in\ball^2$ outside a set of Lebesgue measure $o_\delta(1)|\ball|^2$.
By Proposition~\ref{prop:applytoaj}, this implies the existence of 
affine functions $\varphi_j:\reals^{2d}\to\reals$ satisfying for each $j\in\{1,2,3\}$
\begin{equation}\label{eq:leadstocloseness} 
|\alpha_j(y)- \varphi_j(y)| \le o_\delta(1)\cdot\lambda^{-1/2}
\end{equation}
for all $y\in\ball$ outside a set of Lebesgue measure $o_\delta(1)|\ball|$, 
and satisfying 
\[ \varphi_1(x_1)+\varphi_2(x_2) + \varphi_3(-x_1-x_2)\equiv 0.  \]
Moreover, there exists $S\in\sp(2d)$ such that
\begin{equation} \norm{SL^{-1}}
\le \lambda^{-1/4}o_\delta(1)|\ball|^{-1/2d}
\le \lambda^{-1/4}o_\delta(1).\end{equation}
Equivalently, $L = \tilde L\circ S$ where $\tilde L$ satisfies a lower bound
\begin{equation}\label{relate} |\tilde L(v)| \ge \lambda^{1/4}\eta(\delta)^{-1}|v| \end{equation}
uniformly for all $0\ne v\in\reals^{2d}$, where $\eta(\delta)\to 0$ as $\delta\to 0$.
These properties of $L$ will be exploited below.

Define Gaussian functions
\begin{equation}
\tilde g_{j,y}(t) =  c'_j e^{-\lambda\gamma_j(t-\varphi_j(y))^2}.
\end{equation}
\eqref{eq:leadstocloseness}
implies that $\norm{\tilde f_{j,y_j}-\tilde g_{j,y_j}}_{p_j}\le o_\delta(1)
= o_\delta(1)\norm{\tilde f_{j,y_j}}_{p_j}$ for all $\by\in\Lambda_{\reals^{2d}}\setminus E'$,
with $\nu_\bF(E')= o_\delta(1)$.
A consequence, since $\tilde G_j\in L^1$, is that
\begin{equation}
\norm{\tilde f_{j,y}(t)F_j(y) - \tilde g_{j,y}(t)\tilde G_j(y)}_{L^{p_j}(\ball\times\reals,\,dy\,dt)}
\le o_\delta(1) \end{equation}
for each $j\in\{1,2,3\}$.
Therefore
\begin{equation}
\norm{\tilde f_{j,y}(t)F_j(y) - \tilde g_{j,y}(t)\tilde G_j(y)}_{L^{p_j}(\reals^{2d}\times\reals,\,dy\,dt)}
\le o_\delta(1). \end{equation}

Returning to the original coordinates $(x,t)$ for $\heis^d$, define
\begin{equation}
\tilde h_j(x,t) = \tilde g_{j,y}(t) = \tilde g_{j,L(x)}(t) = c'_j e^{-\lambda\gamma_j(t-\varphi_j\circ L(x))^2}.
\end{equation}
The next step is to simplify matters by exploiting symmetries. 
We apply in sequence two elements $\Psi\in\fg(\heis^d)$. 
The first is $\Psi=(\psi_1,\psi_2,\psi_3)$, with
$\psi_j(x_j,t_j) = t_j-\varphi_j\circ L(x_j)$. 
The second takes the form $\psi_j(x,t) = (S(x),t)$, 
where $S$ is as in \eqref{relate}.
Replace $f_j$ by $f_j\circ\psi_j$
for each of these in turn, continuing to denote by $f_j$ the resulting functions
and by $F_j$ the associated functions with domains $\reals^{2d}$.
Likewise compose $\tilde h_j$ with each of these in turn, and denote
by $h_j^\sharp$ the resulting composed functions.
Matters are thereby reduced to the situation in which 
\begin{align*} 
& h_{j,x}^\sharp(t) = c_j e^{-\lambda\gamma_j  t^2},
\\&
F_j(x) = c_j e^{-\gamma_j |\tilde L(x)|^2},
\\&
\norm{f_{j,x_j}-h_{j,x_j}^\sharp}_{p_j} \le o_\delta(1)
\ \forall\, \bx\in\Lambda_{\reals^{2d}}\setminus E''
\end{align*}
where $E''\subset\Lambda_{\reals^{2d}}$ satisfies $\nu_{\bF}(E'')\le o_\delta(1)$
and $\tilde L,\lambda$ are related by \eqref{relate}.


The next reduction is
an automorphic change of variables in $\heis^d$ of the form
\[(x,t)\mapsto \psi(x,t) =  (z,r) = (\eta(\delta)^{-1}\lambda^{1/4}x,\eta(\delta)^{-2}\lambda^{1/2}t),\]
where $\eta(\delta)$ is the function introduced in \eqref{relate}.
Setting $\psi_j=\psi$ for all three indices $j$ defines an element $\Psi\in\fg(\heis^d)$.
In these new coordinates, the conclusion is that
$\norm{f_j-f_j^*}_{p_j} \le o_\delta(1)$ where
\[f_j^*(z,r) = c_j e^{-\gamma_j |L'z|^2} e^{-\gamma_j\eps r^2},\]
where $L':\reals^{2d}\to\reals^{2d}$ 
is linear and satisfies $|L'z|\ge|z|$ for all $z\in\reals^{2d}$,
and $\eps\le \eps(\delta)$ where $\eps(\delta)$ tends to $0$  as $\delta\to 0$,
and depends also on $\bp,d$ as well as on $\delta$, but not otherwise on $\bff$.
This completes the analysis of nonnegative near-extremizers $\bff$.
\end{proof}

\section{The complex-valued case} \label{section:complex}
Let $\delta>0$ be small, and 
consider an arbitrary complex-valued $\bff = (f_1,f_2,f_3)$
satisfying $\norm{f_j}_{p_j}\ne 0$ for each index $j$,
and $|\scriptt_{\heis^d}(\bff)| \ge (1-\delta)\bestA_\bp^{2d+1}\norm{\bff}_\bp$.
Since $\scriptt_{\heis^d}(|f_1|,|f_2|,|f_3|) \ge |\scriptt_{\heis^d}(\bff)|$,
we may apply the result proved above for nonnegative near-extremizers
to conclude that there exists $\Psi=(\psi_1,\psi_2,\psi_3)\in\fg(\heis^d)$
such that for each $j\in\{1,2,3\}$,
\[ \left\{ \begin{aligned}
&\norm{|f_j\circ\psi_j|-G_j}_{p_j} \le o_\delta(1)\norm{f_j\circ\psi_j}_{p_j}
\\& G_j(x,t) = c_j e^{-\gamma_j |Lx|^2}e^{-\gamma_j \eps t^2}
\end{aligned} \right.\]
where $c_j\asymp 1$, $|Lx|\ge |x|$ for all $x\in\reals^{2d}$,
and $\eps\le o_\delta(1)$.
By replacing $f_j$ by $f_j\circ\psi_j$ multiplied by an appropriate normalizing constant
factor, we may also assume that $\norm{f_j}_{p_j}=1$
and then likewise that $\norm{G_j}_{p_j}=1$.

Write $f_j = e^{i\alpha_j}|f_j|$ where $\alpha_j: \heis^d\to\reals$
is measurable.
We seek to analyze the factors $e^{i\alpha_j}$.
Since $\norm{f_j - e^{i\alpha_j}G_j}_{p_j}
= \norm{|f_j| - G_j}_{p_j} \le o_\delta(1)$,
\[|\scriptt_{\heis^d}(e^{i\alpha_1}G_1,e^{i\alpha_2}G_2,e^{i\alpha_3}G_3)|
\ge (1-o_\delta(1))\bestA_p^{2d+1}.\]
Thus it suffices to prove that $(e^{i\alpha_j}G_j: 1\le j\le 3)$
satisfies the conclusions of Theorem~\ref{thm:main}.
So we redefine $f_j$ to be  $e^{i\alpha_j}G_j$ henceforth. 

By multiplying these functions by unimodular constants, we may assume without loss
of generality that $\scriptt_{\heis^d}(\bff)$ is real and positive.
Since then $\Re\scriptt_{\heis^d}(f_1,f_2,f_3) \ge (1-o_\delta(1))\scriptt_{\heis^d}(|f_1|,|f_2|,|f_3|)$,
\begin{equation}\label{eq:multclose}
|\prod_{j=1}^3 e^{i\alpha_j(z_j)}-1|=o_\delta(1)
\ \text{ for all $\bz\in(\heis^d)^3$ outside a set 
satisfying $\nu_\bG(E)\le o_\delta(1)$} \end{equation}
where
$ d\nu_\bG(\bz) = \prod_j G_j(z_j)\,d\lambda_{\heis^d}(\bz)$.

Let $\rho=\rho(\delta)$ be a positive quantity that tends to infinity 
slowly as $\delta\to 0$ and is to be chosen below, and let $\ball\subset\reals^{2d}$ be the ball of radius $1$
centered at $0$.
By \eqref{eq:multclose},
\begin{equation}\label{mult:tosub} \Big| e^{i\alpha_1(L^{-1}y_1,t_1)} e^{i\alpha_2(L^{-1}y_2,t_2)}
e^{i\alpha_3 (-L^{-1}y_1-L^{-1}y_2,-t_1-t_2 - \sigma_L(y_1,y_2))}
-1 \Big| \le o_\delta(1)\end{equation}
for all
$((y_1,t_1),(y_2,t_2))\in (\ball\times[-\rho \eps^{-1/2},\rho \eps^{-1/2}])^2$
outside a set of Lebesgue measure less than or equal to $o_\delta(1)\cdot\eps^{-1}$
provided that the function $\rho$ is chosen so that
$\rho(\delta)\to\infty$ sufficiently slowly as $\delta\to 0$.
Therefore according to Lemma~\ref{lemma:nearlycharacter},
for each index $j$, $\alpha_j$ takes the form
\begin{equation}\label{eq:amultreln}
e^{i\alpha_j(L^{-1}y,t)} = e^{i (a_j(y)t+b_j(y)+o_\delta(1))}\end{equation}
for $y\in\ball$ and $|t|\le \rho(\delta) \eps^{-1/2}$
outside a set of Lebesgue measure $o_\delta(1)\eps^{-1/2}$.
The coefficients $a_j,b_j$ are real-valued measurable functions.

We will use informal language ``for nearly all $y\in\ball$''
to indicate a Lebesgue measurable subset $A\subset \ball$
satisfying $|A|\le o_\delta(1)|\ball|$, 
where the quantity $o_\delta(1)$ depends on $\delta,\bp,d$ alone
and tends to $0$ as $\delta\to 0$ while $\bp,d$ remain fixed.
``Nearly all $(y_1,y_2)\in\ball^2$'' has a corresponding meaning.

Invoking \eqref{eq:amultreln} together with \eqref{mult:tosub} 
for typical $(t_1,t_2)$ and also for typical $(t'_1,t'_2)$ satisfying
$|t_j|,|t'_j|\le \rho(\delta)\eps^{-1/2}$, considering products
of the exponential factors, and setting $u_j = t'_j-t_j$ gives
\begin{equation} \label{eq:givesn}
\Big| e^{i u_1 a_1(L^{-1}y_1)} e^{i u_2 a_2(L^{-1}y_2)}
e^{-i (u_1+u_2) a_3 (-L^{-1}y_1-L^{-1}y_2)}
-1 \Big| \le o_\delta(1)\end{equation}
for nearly all $(y_1,y_2)\in\ball^2$ and nearly all $(u_1,u_2)$ satisfying 
$|u_j|\le \tfrac12 \rho(\delta)\eps^{-1/2}$
outside a set of Lebesgue measure $o_\delta(1)\eps^{-1}$.
The advantage of \eqref{eq:givesn} over \eqref{mult:tosub}
is that $b_j$ and $\sigma_L$ have been eliminated.

This last inequality can be equivalently written
\begin{equation} \label{eq:beyondstarted}
\Big| 
e^{i u_1 [a_1(L^{-1}y_1)-a_3(-L^{-1}(y_1-y_2))]} 
e^{i u_2 [a_2(L^{-1}y_1)-a_3(-L^{-1}(y_1-y_2))]} 
-1 \Big| \le o_\delta(1).\end{equation}
By Lemma~\ref{lemma:lastgasp}, below, \eqref{eq:beyondstarted} implies that
\begin{equation}
\big|a_1(L^{-1}(y_1)-a_3(-L^{-1}(y_1-y_2))\big| \le o_\delta(1)\eps^{1/2} 
\end{equation}
for nearly all $(y_1,y_2)\in\ball^2$.
Note that unlike the functions $\alpha_j$, which are only determined up to addition
of arbitrary measurable functions taking values in $2\pi\integers$,
the constituent parts $a_j$ can be pinned down as $\reals$--valued,
rather than $\reals/2\pi\integers$--valued, functions.

Therefore there exists a real number $\tilde a$
such that $|a_j(L^{-1}y)-\tilde a|\le o_\delta(1)\eps^{1/2}$
for nearly all $y\in\ball$ for $j=1,3$.
The same reasoning gives the same conclusion for $j=2$.
Thus for each $j\in\{1,2,3\}$,
\begin{equation}
e^{i\alpha_j(L^{-1}y,t)} = e^{i \tilde a t} e^{i b_j(y)} + o_\delta(1)
\end{equation}
for all $(y,t)\in\ball\times[-\rho(\delta)\eps^{-1/2},\rho(\delta)\eps^{-1/2}]$
outside a set of Lebesgue measure $o_\delta(1)\eps^{-1/2}$.
Thus 
\begin{equation} \norm{ e^{i\alpha_j(x,t)}G_j(x,t) 
- e^{i(\tilde a t + b_j(L_j(x))}G_j(x,t)}_{L^{p_j}(\heis^d)}
\le o_\delta(1)\norm{f_j}_{p_j};\end{equation}
so we may replace $\alpha_j(x,t)$ by $\tilde a t + b_j(L(x))$.

Inserting this into \eqref{mult:tosub} gives
\begin{equation} \label{gettinglonger}
\Big| e^{i b_1(L^{-1}y_1)} e^{i b_2(L^{-1}y_2)}
e^{i b_3 (-L^{-1}y_1-L^{-1}y_2)}
e^{-i\tilde a \sigma_L(y_1,y_2)}
-1 \Big| \le o_\delta(1)\end{equation}
for nearly all $(y_1,y_2)\in\ball^2$.
From the antisymmetry of $\sigma_L$ it follows that
\begin{equation}
\Big| e^{i (b_1+b_2)(L^{-1}y_1)} e^{i (b_1+b_2)(L^{-1}y_2)}
e^{i 2b_3 (-L^{-1}y_1-L^{-1}y_2)}
-1 \Big| \le o_\delta(1)\end{equation}
for nearly all $(y_1,y_2)\in\ball^2$;
this can be deduced by interchanging
$y_1$ with $y_2$ and considering the product of the two
resulting left-hand sides of \eqref{gettinglonger}. 

According to Lemma~\ref{lemma:nearlycharacter},
the functions $e^{i2b_3\circ L^{-1}}$ and $e^{i(b_1+b_2)\circ L^{-1}}$
nearly agree with exponentials of imaginary affine functions,
at nearly all points of $\ball$.
Since
\begin{equation}
\Big| e^{i 2b_1(L^{-1}y_1)} e^{i 2b_2(L^{-1}y_2)}
e^{i 2b_3 (-L^{-1}y_1-L^{-1}y_2)}
e^{-i2\tilde a \sigma_L(y_1,y_2)}
-1 \Big| \le o_\delta(1)\end{equation}
for nearly all $(y_1,y_2)\in\ball^2$ by \eqref{gettinglonger},
it follows by invoking this information for $b_3$ that
\[ e^{i 2b_1(L^{-1}y_1)} e^{i 2b_2(L^{-1}y_2)}
e^{-i2\tilde a \sigma_L(y_1,y_2)}\]
is nearly equal to the exponential of an imaginary affine function of $(y_1,y_2)$,
at nearly all points of $\ball^2$.

Next consider the ratio
\begin{equation} \label{ratioconsidered}
\frac  {e^{i 2b_1(L^{-1}y_1)} e^{i 2b_2(L^{-1}(u+y_2))} e^{-i2\tilde a \sigma_L(y_1,u+y_2)}}
{e^{i 2b_1(L^{-1}y_1)} e^{i 2b_2(L^{-1}y_2)} e^{-i2\tilde a \sigma_L(y_1,y_2)}}
= e^{i 2b_2(L^{-1}(u+y_2))} e^{-i2b_2(L^{-1}(y_2))} e^{-i2\tilde a \sigma_L(y_1,u)}. \end{equation}
From the conclusion of the preceding paragraph one can deduce that
the right-hand side of \eqref{ratioconsidered}
nearly coincides with the exponential of an imaginary affine function of $u$ alone,
at nearly all points $(y_1,y_2,u)$ with $y_1\in\ball$ and $y_2,u\in \tfrac12\ball$.
On the right-hand side, only the last exponential factor depends on $y_1$, so
by regarding this quantity as a function of $y_1$
we conclude that $|\tilde a|\cdot|\sigma_L(v,u)|\le o_\delta(1)$
for nearly all $(v,u)\in (\tfrac14 \ball)^2$. 
Therefore
\begin{equation}\label{tildeaLbound}
|\tilde a| \cdot \sup_{|x|,|y|\le 1} |\sigma_L(x,y)| \le o_\delta(1). \end{equation}
Therefore by Lemma~\ref{lemma:matrixalgebra}, below,
there exists $S\in\sp(2d)$ such that 
$|\tilde a|\cdot \norm{S L^{-1}}^2 \le o_\delta(1)$.

Combining this with \eqref{gettinglonger} yields
\begin{equation} \label{gettingshorter}
\Big| e^{i b_1(L^{-1}y_1)} e^{i b_2(L^{-1}y_2)}
e^{i b_3 (-L^{-1}y_1-L^{-1}y_2)}
-1 \Big| \le o_\delta(1)\end{equation}
for nearly all $(y_1,y_2)\in\ball^2$.
By Lemma~\ref{lemma:nearlycharacter}, for each $j\in\{1,2,3\}$ there exists an affine
function $L_j:\reals^{2d}\to\reals$ such that
\[ |e^{ib_j(L^{-1}y)}-e^{iL_j(y)}|\le o_\delta(1)\]
for nearly all $y\in\ball$.
Thus
\begin{equation} e^{i\alpha_j(L^{-1}y,t)} = e^{i\tilde a t}e^{iL_j(y)} + o_\delta(1) \end{equation}
for $(y,t)\in\ball\times\reals$ satisfying $|t|\le \rho(\delta)\eps^{-1/2}$
outside a set of Lebesgue measure $\le o_\delta(1)\eps^{-1/2}$,
where $\tilde a$ satisfies \eqref{tildeaLbound}.

This concludes the proof of Theorem~\ref{thm:main} in the general complex-valued case.
\qed

\section{Some matrix algebra}
\begin{lemma}\label{lemma:matrixalgebra}
For any invertible linear endomorphism $L:\reals^{2d}\to\reals^{2d}$,
\begin{equation}
\norm{L^*JL}^{1/2} = \inf_{S\in\sp(2d)} \norm{S^{-1}L}.
\end{equation}
\end{lemma}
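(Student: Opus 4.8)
The plan is to convert the minimization over the symplectic group into a statement about the single matrix $Q := L^*JL$, which is real, invertible, and skew-symmetric, since $Q^* = L^*J^*L = -L^*JL = -Q$. First I would use that $\sp(2d)$ is a group, so that $S\in\sp(2d)$ iff $S^{-1}\in\sp(2d)$, and $S^*JS=J$ is equivalent to $(S^{-1})^*JS^{-1}=J$. Hence, for $S\in\sp(2d)$ and $M:=S^{-1}L$ one gets $M^*JM = L^*(S^{-1})^*JS^{-1}L = L^*JL = Q$; conversely, if $M$ is invertible with $M^*JM=Q$ then $S:=LM^{-1}$ lies in $\sp(2d)$ and $S^{-1}L=M$. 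Since $Q$ is invertible, $M^*JM=Q$ already forces $M$ invertible, so the sets $\set{\,S^{-1}L : S\in\sp(2d)\,}$ and $\set{\,M : M^*JM=Q\,}$ coincide, and therefore
\[ \inf_{S\in\sp(2d)}\norm{S^{-1}L} \;=\; \inf\set{\norm{M}:\ M^*JM=Q\,}. \]

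For the lower bound, $J$ is orthogonal, so $\norm{J}=1$; together with $\norm{M^*}=\norm{M}$ this gives $\norm{Q}=\norm{M^*JM}\le\norm{M}^2$ for every $M$ with $M^*JM=Q$, whence the right-hand infimum is at least $\norm{Q}^{1/2}=\norm{L^*JL}^{1/2}$. For the matching upper bound I would produce an explicit minimizer from canonical forms. By the real normal form for skew-symmetric matrices there is an orthogonal $O$ with $O^*QO=\bigoplus_{k=1}^d \mu_k\, j$, where $j=\begin{pmatrix}0&1\\ -1&0\end{pmatrix}$ and each $\mu_k>0$; since a skew-symmetric matrix is normal, $\norm{Q}$ equals its spectral radius, which is $\max_k\mu_k$. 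Likewise $J$ is orthogonally equivalent to $\bigoplus_{k=1}^d j$, say $P^*JP=\bigoplus_{k=1}^d j$ with $P$ orthogonal. Put $D=\bigoplus_{k=1}^d \mu_k^{1/2}I_2$; then $D$ commutes blockwise with $\bigoplus_k j$, so $(PD)^*J(PD)=D^*\big(\bigoplus_k j\big)D=\bigoplus_k \mu_k\, j$, and hence $M:=PDO^*$ satisfies $M^*JM = O\big(\bigoplus_k\mu_k\, j\big)O^* = Q$. Moreover $\norm{M}=\norm{D}=\max_k\mu_k^{1/2}=\norm{Q}^{1/2}$, because $P$ and $O$ are orthogonal. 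This yields $\inf_{S\in\sp(2d)}\norm{S^{-1}L}\le\norm{L^*JL}^{1/2}$, and combined with the lower bound it gives the asserted equality, the infimum in fact being attained.

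The step needing genuine care is the construction of the minimizer $M$: any congruence from $J$ to $Q$ would show the two sets above coincide, but one must choose one of operator norm exactly $\norm{Q}^{1/2}$, which is arranged by bringing each of $J$ and $Q$ to block-diagonal form by its own orthogonal change of basis and interposing the block-scalar matrix $D$ calibrated to the eigenvalues $\mu_k$. The remaining ingredients --- the group manipulations, the adjoint bookkeeping, and the identity $\norm{Q}=\max_k\mu_k$ via normality --- are routine.
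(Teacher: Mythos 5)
Your proposal is correct and follows essentially the same route as the paper: first the elementary bound $\norm{Q}\le\norm{M}^{2}$ for any $M$ with $M^{*}JM=Q$, and then an explicit minimizer constructed by bringing the real skew-symmetric matrix $Q=L^{*}JL$ to block-diagonal canonical form by an orthogonal conjugation and scaling the $2\times 2$ blocks, which is precisely the paper's $M=\scripto_{2}T\scripto_{1}$ with your $P,D,O^{*}$ playing the roles of $\scripto_{2},T,\scripto_{1}$. The only cosmetic difference is that you make explicit the identification of $\{S^{-1}L:S\in\sp(2d)\}$ with $\{M:M^{*}JM=Q\}$, which the paper leaves implicit.
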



\begin{proof}
That $\norm{L^*JL}\le \inf_{S\in\sp(2d)} \norm{S^{-1}L}^2$
is immediate. For any $L$ and any $S\in\sp(2d)$,
\begin{align*}
\norm{L^*JL}
= \norm{(S^{-1}L)^*S^* J S(S^{-1}L)}
= \norm{(S^{-1}L)^* J (S^{-1}L)}
\le \norm{S^{-1}L}\norm{J}\norm{S^{-1}L}
= \norm{S^{-1}L}^2.
\end{align*}

To establish the reverse inequality, note that since
$L^*JL$ is a nonsingular antisymmetric real matrix, its eigenvalues are imaginary,
and come in conjugate pairs; if $i\lambda$ is an eigenvalue then 
$\lambda\ne 0$ and
$-i\lambda$ is also an eigenvalue, and the eigenspace associated to $-i\lambda$
has the same dimension as the eigenspace associated to $i\lambda$; 
coordinatewise complex conjugation interchanges these two eigenspaces.
Therefore $L^*JL$ can be written in the form $\scripto_1^* K\scripto_1$
where $\scripto_1\in O(2d)$ and $K$ takes the form
\begin{equation}
K = 
\begin{pmatrix}
0 & t_1 & 0 & 0 & \cdots &0
\\
-t_1 & 0 & 0  & 0 & \cdots &0
\\
0 & 0 & 0 & t_2 &    \cdots &0
\\
0 & 0 & -t_2 & 0 &  \cdots &0
\\
\vdots& \vdots & \vdots & \vdots & \cdots & \vdots
\end{pmatrix}
\end{equation}
with $2\times 2$ blocks $\begin{pmatrix} 0 & t_j \\ -t_j & 0\end{pmatrix}$
along the diagonal, where $t_j\in\reals^+$ and  the eigenvalues 
are $\pm i t_j$.
Now $t_j \le \norm{L^* J L}$.
Defining 
\begin{equation} T = 
\begin{pmatrix}
t_1^{1/2} & 0  & 0 & 0 & 0 
&\cdots
\\
0 & t_1^{1/2} & 0 & 0 & 0
&\cdots
\\
0 & 0 & t_2^{1/2} & 0  & 0
&\cdots
\\
0 & 0 & 0 & t_2^{1/2} & 0 
&\cdots
\\
\vdots & \vdots & \vdots & \vdots & \vdots 
& \cdots
\end{pmatrix}
\end{equation}
gives
\begin{equation}
K = T^* \tilde J T
\end{equation}
where 
\begin{equation}
\tilde J = 
\begin{pmatrix}
0 & 1 & 0 & 0 & \cdots &0
\\
-1 & 0 & 0  & 0 & \cdots &0
\\
0 & 0 & 0 & 1 &    \cdots &0
\\
0 & 0 & -1 & 0 &  \cdots &0
\\
\vdots& \vdots & \vdots & \vdots & \cdots & \vdots
\end{pmatrix}
\end{equation}
with $2\times 2$ blocks $\begin{pmatrix} 0 & 1 \\ -1 & 0 \end{pmatrix}$
along the diagonal.
Now $\tilde J = \scripto_2^* J\scripto_2$ for an appropriate permutation matrix
$\scripto_2\in O(2d)$
and thus we have
\begin{equation} L^*JL = M^* J M \end{equation}
where $M = \scripto_2 T \scripto_1$.
Equivalently,
\begin{equation}
(LM^{-1})^* J (LM^{-1})=J,
\end{equation}
so $LM^{-1}\in \sp(2d)$. 
That is, $L= SM$ where $S\in\sp(2d)$.
Equivalently, $M=S^{-1}L$ satisfies 
\[\norm{M} =\norm{\scripto_2 T \scripto_1} \le\norm{\scripto_2}\norm{T}\norm{\scripto_1}
= \norm{T} = \norm{L^*JL}^{1/2}, \]
as required.
\end{proof}

\section{Integration of difference relations} \label{section:differencerelations}

In this section we establish Theorem~\ref{thm:polyfunctleqn}, which is motivated by 
considerations that have arisen in this paper, but on which the main theorems
do not rely. This is done in the hope that it will prove useful in other problems.
We continue to use the expressions ``nearly every'' and ``nearly all points'' in the same sense as 
in \S\ref{section:complex}.

The next lemma is elementary; the proof is omitted.
\begin{lemma} \label{lemma:extractcoeff}
Let $d,m\in\naturals$.
Let $q(x,y) = \sum_{0\le|\alpha|\le m} a_\alpha(y) x^\alpha$
where $a_\alpha$ are Lebesgue measurable functions.
Suppose that $|q(x,y)|\le 1$ for nearly every $(x,y)\in\ball\times\tilde\ball$.
Then for any multi-index $\beta$ satisfying $0\le|\beta|\le m$, 
$|a_\beta(y)|\le C$ where $C<\infty$ depends only on $m,d$.
\end{lemma}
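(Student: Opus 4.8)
The plan is to exploit finite-dimensionality of the space $\scriptp_m$ of polynomials of degree at most $m$ on $\reals^d$: each coefficient functional is represented by a fixed dual polynomial, and the contribution of the small exceptional set is then absorbed. We may assume $\ball$ is the closed unit ball $\set{x\in\reals^d:|x|\le1}$; the general case reduces to this by an affine change of variables. Equip $\scriptp_m$ with the symmetric bilinear form $B(p,r)=\int_\ball p\,r$, which is nondegenerate on $\scriptp_m$ (it is positive definite on real polynomials, since a nonzero polynomial cannot vanish almost everywhere on $\ball$). Hence for each multi-index $\beta$ with $|\beta|\le m$ the coefficient functional $\sum_\alpha c_\alpha x^\alpha\mapsto c_\beta$ is given by $c_\beta=\int_\ball p\,\Phi_\beta$ for a fixed $\Phi_\beta\in\scriptp_m$. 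Set $C_0=\max_{|\beta|\le m}\norm{\Phi_\beta}_{L^\infty(\ball)}$, $C_1=\max_{|\beta|\le m}\norm{\Phi_\beta}_{L^1(\ball)}$, and $C_2=\sum_{|\alpha|\le m}\norm{x^\alpha}_{L^2(\ball)}$; all three depend only on $m,d$, and the triangle inequality gives $\norm{p}_{L^2(\ball)}\le C_2\max_\alpha|c_\alpha|$ for every $p=\sum_\alpha c_\alpha x^\alpha\in\scriptp_m$.

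Next I would unpack the hypothesis. Let $\scripte\subset\ball\times\tilde\ball$ be the exceptional set, so that $|\scripte|\le\eps\,|\ball|\,|\tilde\ball|$ for a small parameter $\eps$ (this is the content of ``nearly every''). By Fubini and Chebyshev, for every $y$ in $\tilde\ball$ outside a set of measure at most $\eps^{1/2}|\tilde\ball|$ the slice $\scripte_y=\set{x\in\ball:(x,y)\in\scripte}$ has measure at most $\eps^{1/2}|\ball|$. Fix such a $y$ and set $p=q(\cdot,y)\in\scriptp_m$, so that $|p|\le1$ on $\ball\setminus\scripte_y$ and $p=\sum_\alpha a_\alpha(y)x^\alpha$. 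Splitting the integral that recovers $a_\beta(y)$ and using $|p|\le1$ on the good set, the bound $|\Phi_\beta|\le C_0$, and Cauchy--Schwarz on the bad set,
\begin{align*}
|a_\beta(y)|=\Big|\int_\ball p\,\Phi_\beta\Big|
&\le\int_{\ball\setminus\scripte_y}|p|\,|\Phi_\beta|+\int_{\scripte_y}|p|\,|\Phi_\beta|
\\&\le C_1+C_0\,|\scripte_y|^{1/2}\norm{p}_{L^2(\ball)}
\le C_1+C_0C_2|\ball|^{1/2}\eps^{1/4}\max_\alpha|a_\alpha(y)|.
\end{align*}
Taking the maximum over $\beta$ and choosing $\eps$ small enough, depending only on $m,d$, that $C_0C_2|\ball|^{1/2}\eps^{1/4}\le\tfrac12$, the last term is absorbed into the left-hand side, giving $\max_{|\beta|\le m}|a_\beta(y)|\le2C_1$. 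Since this holds for nearly every $y\in\tilde\ball$, the lemma follows with $C=2C_1$.

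The only point requiring care is this final absorption: it needs the exceptional fraction $\eps$ to lie below a threshold determined by $m$ and $d$, which is precisely what ``nearly every'' in the sense of \S\ref{section:complex} supplies (and if that phrase is read as ``almost every'', the integral over $\scripte_y$ simply vanishes and one obtains $|a_\beta(y)|\le C_1$ at once). A compactness variant handles the weaker situation in which the good slice $\ball\setminus\scripte_y$ merely has measure bounded below by a fixed fraction of $|\ball|$: were $p\mapsto\norm{p}_{L^\infty(G)}$ to tend to $0$ along a sequence of sets $G$ with $|G|\ge c|\ball|$ and polynomials $p$ with $\max_\alpha|c_\alpha|=1$, then passing to a limit of the $p$'s in the finite-dimensional $\scriptp_m$ and to a weak-$*$ limit of the indicators $\one_G$ would produce a nonzero polynomial vanishing on a set of positive measure, which is impossible. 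Either route yields the claimed bound with $C$ depending only on $m$ and $d$.
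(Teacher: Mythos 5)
The paper omits the proof of Lemma~\ref{lemma:extractcoeff} entirely (``The next lemma is elementary; the proof is omitted''), so there is nothing to compare against line by line. Your argument is correct, and it fills the gap in a reasonable way: the two essential points---finite-dimensionality of the space $\scriptp_m$ of polynomials of degree $\le m$ (equivalence of the coefficient norm with the $L^2(\ball)$ norm, packaged via the dual polynomials $\Phi_\beta$), and a Fubini/Chebyshev slicing of the exceptional set followed by absorption---are exactly what an ``elementary'' proof must supply here. Your correct observation that the conclusion can only be meant ``for nearly every $y$'' is also needed; otherwise a slice $\scripte_y=\ball$ on a thin set of $y$'s defeats the claim.

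Two small points of hygiene. First, the sentence ``the general case reduces to this by an affine change of variables'' overstates things: a dilation $x\mapsto rx$ rescales $a_\alpha$ by $r^{|\alpha|}$, so the constant in the conclusion depends on the radius of $\ball$, not only on $m,d$. This is harmless in context, since \S\ref{section:differencerelations} explicitly normalizes $\ball$ to have radius $1$ and the lemma is only ever invoked for such balls (and their fixed multiples $\rho\ball$), but it would be cleaner to say that the constant is uniform once the radius is fixed. Second, the absorption step tacitly uses that $\max_\alpha|a_\alpha(y)|<\infty$ for the fixed $y$; this is automatic for a.e.\ $y$ since each $a_\alpha$ is a measurable (hence a.e.\ finite) function, but it is worth a clause. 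Neither issue affects the validity of the argument, and the compactness fallback you sketch at the end is a legitimate alternative.
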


Before embarking on the core of the proof of Theorem~\ref{thm:polyfunctleqn}
we introduce several simplifications.
Firstly, it suffices to prove this in the case in which $\ball$ is centered at $0$,
for the hypotheses and conclusions are invariant under translation. 
Second, it suffices to prove this for the ball $\ball$ centered at $0$
of radius $1$. For if the result holds for some ball centered at $0$,
then it holds uniformly for all such balls, because the hypotheses and conclusions
are invariant under dilations.  Thirdly,
it suffices to prove the theorem for $A=1$,
since hypotheses and conclusions are invariant under multiplication of $\varphi$
by positive scalars,
and the case $A=0$ follows from the case $A>0$ with uniform bounds
by a straightforward limiting argument.
Fourthly, assuming $\ball$ to be centered at the origin, 
it suffices to prove that there exists $\rho>0$, depending only on $d,D$,
such that the conclusion holds for all $x\in\rho\ball=\{\rho y: y\in\ball\}$
outside a set of measure $\eps \rho^d|\ball|$.
Indeed, the full conclusion for $\ball$ itself then follows by combining this weaker
conclusion with a Whitney decomposition of $\ball$, as in
\cite{christyoungest}. One arranges that each Whitney cube $Q_k$ is contained
in a ball $B_k$ of comparable diameter, such that the ball $B_k^*$ concentric
with $B_k$ with radius enlarged by a factor of $\rho^{-1}$ is contained in $\ball$.
Invoking the weaker result in its translation and dilation invariant form
gives an approximation by an affine function on $B_k$, provided that $|B_k|/|\ball|$
is not too small as a function of $\delta$. These affine functions patch together
on most of $\ball$ to yield a single globally defined affine function, up to 
a suitably small additive error.
The same reasoning reduces the case of small parameters $\eta$ to $\eta=1$.

The proof of the theorem will involve multiple steps in which $\ball$ is replaced
by a ball $\rho'\ball$ where $\rho'>0$ depends only on $d,D$. The final constant
$\rho$ is the product of all these factors $\rho'$. We will simplify notation
by allowing the value of $\rho$ to change from one step to the next, 
so that each of these factors $\rho'$, and products of successive factors,
are denoted by $\rho$.

The fifth simplification is one of language. Various conclusions will
hold for all $x\in\rho\ball$ except for a set of measure at most $\tau\rho^d|\ball|$
where $\tau>0$ depends only on $d,D,\delta$ and $\tau\to 0$ as $\delta\to 0$.
In this circumstance we will not specify a function $\delta\mapsto \tau(\delta)$,
but will simply write that the conclusions in question hold for nearly all $x\in\rho\ball$.
In the same sense we will write ``for nearly all $(x,y)\in\rho\ball\times\rho\ball$'',
and so on.

In the proof we write $O(1)$ for a quantity that is bounded above by some
constant depending only on $D,\eta$. The value of this quantity is permitted
to change from one occurrence to the next. 

We will argue by induction on the degree $D$.
The key to this induction is the observation that Theorem~\ref{thm:polyfunctleqn} implies an additional
conclusion.

\begin{corollary}\label{cor:additional}
Let $D$ be a nonnegative integer.
Under the hypotheses of Theorem~\ref{thm:polyfunctleqn},
for each multi-index satisfying $|\alpha|=D$,
there exists an affine function $\xi_\alpha$
such that 
the coefficients $a_\alpha$ in \eqref{eq:polyfun2} satisfy
\begin{equation} |a_\alpha(h)-\xi_\alpha(h)|\le CA\ \text{ for nearly all $h\in \rho\ball$.}\end{equation}
\end{corollary}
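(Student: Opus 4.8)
The plan is to deduce the Corollary directly from Theorem~\ref{thm:polyfunctleqn} at the same degree $D$, together with the elementary Lemma~\ref{lemma:extractcoeff}; this is exactly the extra input consumed by the inductive step that proves the Theorem. Using the reductions already in force I would take $A=1$, let $\ball$ be the unit ball centered at the origin, and (after the reduction to $\eta=1$) restrict both $x$ and $h$ to $\rho\ball$ for a suitable $\rho\in(0,\tfrac12]$ depending only on $d,D$, so that $x+h\in\ball$.

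First I would run the hypothesis through the conclusion of Theorem~\ref{thm:polyfunctleqn}, obtaining a polynomial $Q$ of degree $\le D+1$ with $|\varphi-Q|\le CA$ outside an exceptional set $S\subset\ball$ of measure $\le\eps(\delta)|\ball|$. A routine Fubini/Chebyshev argument shows that for nearly all $(x,h)\in\rho\ball\times\rho\ball$ both $x$ and $x+h$ avoid $S$; subtracting the two bounds and combining with \eqref{eq:polyfun1} yields
\[ \big|P_h(x)-\Delta_h Q(x)\big|\le C'A \qquad\text{for nearly all }(x,h)\in\rho\ball\times\rho\ball, \]
where $\Delta_h Q(x)=Q(x+h)-Q(x)$.

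Now fix $h$ --- one of nearly all $h\in\rho\ball$ --- for which the slice $\{x\in\rho\ball:(x,h)\text{ good}\}$ has nearly full measure. For such $h$, the function $x\mapsto P_h(x)-\Delta_h Q(x)$ is a polynomial in $x$ of degree $\le D$: the first summand by the hypothesis \eqref{eq:polyfun2}, and the second because differencing a polynomial of degree $\le D+1$ annihilates its top homogeneous part. This polynomial is bounded by $C'A$ on a nearly full subset of $\rho\ball$, so by Lemma~\ref{lemma:extractcoeff} (applied on the ball $\rho\ball$) each of its coefficients is $O(A)$. Matching the coefficient of $x^\alpha$ with $|\alpha|=D$ on the two sides, the contribution from $P_h$ is $a_\alpha(h)$ and the contribution from $\Delta_h Q$ is
\[ \xi_\alpha(h):=\sum_{j=1}^d (\alpha_j+1)\,q_{\alpha+e_j}\,h_j, \]
where the $q_\beta$ are the coefficients of $Q$ and $e_j$ denotes the $j$-th coordinate multi-index. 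The key point is that the degree-$(D+1)$ part of $Q$ contributes nothing to the $x^\alpha$-coefficient of $Q(x+h)-Q(x)$, so $\xi_\alpha$ is genuinely affine --- indeed linear --- in $h$. Therefore $|a_\alpha(h)-\xi_\alpha(h)|\le CA$ for nearly all $h\in\rho\ball$, which is the assertion.

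I expect no genuine obstacle here: the one substantive fact is purely algebraic (differencing lowers the $x$-degree by one and makes the top $x$-coefficients linear in the shift), and everything else is the bookkeeping of exceptional sets already routine in this paper. The single point to be careful about is that $\ball$ must be shrunk to $\rho\ball$ before Theorem~\ref{thm:polyfunctleqn} can be invoked at the translated points $x+h$, which is precisely why the Corollary is stated on $\rho\ball$ rather than on $\ball$.
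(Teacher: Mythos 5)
Your argument is correct and follows the same route as the paper's: apply Theorem~\ref{thm:polyfunctleqn} at degree $D$ to produce $Q$, subtract to obtain $|P_h(x)-\Delta_h Q(x)|\le CA$ for nearly all $(x,h)\in(\rho\ball)^2$, expand $\Delta_h Q$ in powers of $x$, and extract the $x^\alpha$-coefficient via Lemma~\ref{lemma:extractcoeff}, observing that for $|\alpha|=D$ this coefficient of $\Delta_h Q$ is affine (indeed linear) in $h$. One small slip in the prose: it is the parts of $Q$ of degree at most $D$ that contribute nothing to the $x^\alpha$-coefficient of $\Delta_h Q(x)$ when $|\alpha|=D$ (since differencing drops the $x$-degree by one), whereas the degree-$(D+1)$ part of $Q$ is precisely the source of the linear function $\xi_\alpha$; your sentence states the opposite, though the displayed formula $\xi_\alpha(h)=\sum_j(\alpha_j+1)\,q_{\alpha+e_j}\,h_j$ and the conclusion drawn from it are correct.
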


\begin{proof}
To prove this, assuming Theorem~\ref{thm:main} for the given degree $D$,
let $Q$ be a polynomial of degree $\le D+1$ that satisfies
the conclusion \eqref{eq:polyfunconclusion}.
Then assuming as we may that $\ball$ is centered at $0$ and has radius $1$,
$|\Delta_h Q(x)-\Delta_h \varphi(x)| \le CA$ for nearly all $(x,h)\in(\rho\ball)^2$.
Expand $\Delta_h Q(x) = \sum_{|\alpha|\le D} \tilde a_\alpha(h) x^\alpha$
where $\tilde a_\alpha$ are polynomials of degrees $\le D+1-|\alpha|$.
In particular, $\tilde a_\alpha$ is affine when $|\alpha|=D$.

Consider $\Delta_h Q-\Delta_h\varphi$. Substituting for $\Delta_h\varphi$ 
the expression $\sum_{|\alpha|\le D} a_\alpha(h)x^\alpha + O(A)$ given
in the hypothesis yields
\[ \big|\sum_{|\alpha|\le D} (a_\alpha(h)-\tilde a_\alpha(h))x^\alpha \big| \le CA\]
for nearly all $(x,h)\in (\rho\ball)^2$.
Invoking Lemma~\ref{lemma:extractcoeff}
gives $|a_\alpha(x)-\tilde a_\alpha(x)|\le CA$ for nearly all $x\in\rho\ball$,
which is the desired additional conclusion for $|\alpha|=D$.
\end{proof}

\begin{proof}[Proof of Theorem~\ref{thm:polyfunctleqn}]
We proceed by induction on $D$.  
Since the proof of
Corollary~\ref{cor:additional} for degree $D$ relied on Theorem~\ref{thm:main} for that same
degree, in the induction it is only permissible to
invoke Corollary~\ref{cor:additional} for smaller degrees.

The base case $D=0$ is a corollary of
Lemma~\ref{lemma:simplefunctleqn}.
Indeed, it is given that $|\varphi(x+h)-\varphi(x) - p(h)|\le A$
for nearly all points $(x,h)$ with $x\in\ball$ and $h\in\tilde\ball$,
where $p(h)$ is a polynomial of degree zero in $x$ that depends on $h$;
that is, $p(h)$ depends only on $h$.
If $\tilde\ball$ were equal to $\ball^*$
then this would be a direct application of Lemma~\ref{lemma:simplefunctleqn}.
The general case is proved by combining this special case with a Whitney decomposition 
of $\ball$, as in the analysis in \cite{christyoungest}.

In the proof for the inductive step, we operate under the following convention:
For $|\alpha|\le D-2$, $b_\alpha,\tilde b_\alpha,c_\alpha$ denote Lebesgue measurable functions,
with appropriate domains. An equation involving such functions
is to be interpreted as an existence statement; the assertion is that there exist measurable
functions such that the equation holds in the indicated domain.
These are permitted to change from one occurrence of each symbol to the next. 
However, this convention is not in force for $|\alpha|\ge D-1$; for such indices, the
functions $b_\alpha$ do not change after they are first introduced.

Assume without loss of generality that $A=1$.
For the inductive step, let $D\ge 1$, and let $\varphi,P$ satisfy the hypothesis
with $A=1$.  
For $x,s,t\in\rho\ball$ consider
\begin{align*}
\Delta_s\Delta_t\varphi(x)
&= \Delta_t\Delta_s\varphi(x)
\\&= \sum_{|\alpha|\le D} a_\alpha(s)((x+t)^\alpha-x^\alpha) + O(1)
\\&= \sum_{|\alpha|=D-1} \big( b_\alpha(s)\cdot t + b^\sharp_\alpha(s)\big) x^\alpha
+ \sum_{|\alpha|\le D-2} b_\alpha(s,t)x^\alpha + O(1)
\end{align*}
for nearly all $(x,s,t)\in (\rho\ball)^3$
where 
$s\mapsto b_\alpha(s)$ are $\reals^d$--valued measurable functions,
and $s\mapsto b^\sharp_\alpha(s)$ is real-valued and measurable.

The terms $b^\sharp_\alpha(s)$ are bothersome, because differences ought to vanish when $t=0$.
They can be eliminated by introducing an extra parameter
$t'\in \rho\ball$ and considering the resulting approximate functional equation
\begin{equation}\label{eq:stillbothersome}
\Delta_s\big( \Delta_t\varphi(x)- \Delta_{t'}\varphi(x)\big)
= \sum_{|\alpha|=D-1} b_\alpha(s)\cdot (t-t')x^\alpha  
+ \sum_{|\alpha|\le D-2} \tilde b_\alpha(s,t,t')x^\alpha + O(1),
\end{equation}
which holds for nearly all $(x,s,t,t')\in (\rho\ball)^4$.
Now 
\[\Delta_t\varphi(x)- \Delta_{t'}\varphi(x)
= \varphi(x+t)-\varphi(x+t') = \Delta_{t-t'}\varphi(x+t').\]
Therefore substituting $x=y-t'$ and then $\tau = t-t'$,
and specializing \eqref{eq:stillbothersome} to a typical value of $t'$,
gives
\begin{equation}\label{deltasdeltat}
\Delta_s\Delta_\tau \varphi(y)
= \sum_{|\alpha|=D-1} b_\alpha(s)\cdot\tau y^\alpha  + \sum_{|\alpha|\le D-2} c_\alpha(s,\tau)y^\alpha + O(1)
\end{equation}
for nearly all $(y,s,\tau)\in (\rho\ball)^3$,
where the coefficients $c_\alpha$ are measurable functions.

Specialize to a typical $\tau\in\rho\ball$.
With $\psi = \Delta_\tau\varphi$, this conclusion becomes
\begin{equation*}
\Delta_s \psi(y)
= \sum_{|\alpha|=D-1} b_\alpha(s)\cdot \tau y^\alpha + \sum_{|\alpha|\le D-2} c_\alpha(s,\tau)y^\alpha + O(1)
\end{equation*}
for nearly all $(y,s,\tau)\in (\rho\ball)^3$.
Therefore by induction on the degree $D$ and Corollary~\ref{cor:additional},
for each multi-index of degree $|\alpha|=D-1$,
there exists an $\reals^d$--valued affine function that agrees to within $O(1)$ at nearly
every point of $\rho\ball$ with $b_\alpha$. That is,
there exist $\tilde u_\alpha\in\reals^d\otimes\reals^d$
and $\tilde v_\alpha\in\reals^d$ such that
\begin{equation} \label{balphanearlyaffine}
|b_\alpha(s)-(\tilde u_\alpha\cdot s+\tilde v_\alpha)|=O(1)
\ \text{for nearly all $s\in\rho\ball$.}
\end{equation}

For $|\alpha|=D-1$, these coefficients $b_\alpha$ are related to the coefficients $a_\alpha$
in the hypothesis \eqref{eq:polyfun2} as follows:
Writing $b_\alpha(s) = (b_{\alpha,1}(s),\dots,b_{\alpha,d}(s))$,
letting $e_i\in\reals^d$ be the coordinate vector with $i$--th coordinate equal to $1$
and all other coordinates equal to $0$,
and writing $\alpha= (\alpha_1,\dots,\alpha_d)$, 
one has
\[b_{\alpha,i}(s) = (\alpha_i+1) a_{\alpha+e_i}(s) + O(1).\]
This is obtained by writing $\Delta_s\Delta_\tau\varphi = \Delta_\tau\Delta_s\varphi$,
substituting the right-hand side of \eqref{eq:polyfun2} for $\Delta_s\varphi$,
applying $\Delta_\tau$, expanding $(x+\tau)^\alpha$,
and invoking Lemma~\ref{lemma:extractcoeff} to reach a conclusion for the first order Taylor
expansion with respect to $\tau$.

It follows that for each multi-index satisfying $|\beta|=D$,
$a_\beta$ is approximately affine in the sense that
\begin{equation} 
|a_\beta(s)-(u_\beta\cdot s+v_\beta)|=O(1)
\ \text{for nearly all $s\in\rho\ball$}
\end{equation}
for certain 
$u_\beta\in\reals^d\otimes\reals^d$ and $v_\beta\in\reals^d$. 
Insert this conclusion into the hypotheses \eqref{eq:polyfun1},\eqref{eq:polyfun2} to obtain
\[ \Delta_s\varphi(x) = \sum_{|\alpha|=D} (u_\alpha\cdot s+v_\alpha) x^\alpha
+ \sum_{|\alpha|\le D-1} a_\alpha(s) x^\alpha + O(1)\]
for nearly all $(x,s)\in(\rho\ball)^2$.
Once again, there are bothersome terms, $v_\alpha x^\alpha$.
Once again, these can be removed; 
consider $\Delta_s\varphi-\Delta_{s'}\varphi$ and argue as was done for a parallel situation above
to establish \eqref{deltasdeltat}.
One concludes that 
\begin{equation}\label{eq:deltasvarphi} 
\Delta_s\varphi(x) = \sum_{|\alpha|=D} u_\alpha\cdot s x^\alpha
+ \sum_{|\alpha|\le D-1} b_\alpha(s) x^\alpha + O(1)\end{equation}
for nearly all $(x,s)\in(\rho\ball)^2$,
for certain measurable coefficients $b_\alpha$.

We will show below, in Lemma~\ref{lemma:qrequirement}, that
there exists a homogeneous polynomial $q$ of degree $\le D+1$ satisfying
\begin{equation}\label{eq:qrequirement} \Delta_s q(x) \equiv \sum_{|\alpha|=D} u_\alpha\cdot s x^\alpha
+ \sum_{|\alpha|\le D-1} c_\alpha(s)x^\alpha + O(1)\end{equation}
for all $(x,s)\in (\rho\ball)^2$ and for some (polynomial) coefficient functions $c_\alpha$,
with the same $u_\alpha$ as in \eqref{eq:deltasvarphi}.
Granting this for the present, set $\psi = \varphi-q$.
Then
\begin{equation} \Delta_s\psi(x) = \sum_{|\alpha|\le D-1} c_\alpha(s)x^\alpha + O(1)\end{equation}
for nearly all $(x,s)\in(\rho\ball)^2$,
where $c_\alpha$ are measurable functions.
This is the original hypothesis, with $\ball$ replaced by $\rho\ball$,
$\varphi$ replaced by $\psi$, and $D$ replaced by $D-1$.
Therefore it suffices to apply the induction hypothesis to conclude that
$\psi$, and hence $\varphi = \psi+q$, have the required form.
This completes the proof of Theorem~\ref{thm:main}, modulo the proof of the next lemma.
\end{proof}

\begin{lemma} \label{lemma:qrequirement}
There exists a polynomial $q$ of degree $\le D+1$ that satisfies
\eqref{eq:qrequirement}.
\end{lemma}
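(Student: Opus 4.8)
The plan is to seek $q$ homogeneous of degree $D+1$. Its lower-degree part would only contribute terms of $x$-degree $\le D-1$ to $\Delta_s q$, and these can be absorbed into $\sum_{|\alpha|\le D-1}c_\alpha(s)x^\alpha$; moreover, expanding $q(x+s)$ in powers of $s$ gives $\Delta_s q(x)=s\cdot\nabla q(x)+\sum_{k\ge 2}\tfrac1{k!}(s\cdot\nabla)^k q(x)$, where each term with $k\ge 2$ has $x$-degree $\le D-1$. Thus \eqref{eq:qrequirement} will follow once I find a homogeneous degree-$(D+1)$ polynomial $q$ for which the polynomial
\[ s\cdot\nabla q(x)-\sum_{|\alpha|=D}(u_\alpha\cdot s)\,x^\alpha \]
is $O(1)$ on $\rho\ball\times\rho\ball$. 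Writing this target as $\sum_{i=1}^d\omega_i(x)\,s_i$ with $\omega_i(x):=\sum_{|\alpha|=D}(u_\alpha)_i x^\alpha$ homogeneous of degree $D$, the requirement is that $\partial_i q-\omega_i$ have $O(1)$ coefficients for each $i$. The only genuine obstacle is that $s\cdot\nabla q$ runs exactly over the curl-free $d$-tuples of homogeneous degree-$D$ polynomials, while $(\omega_i)$ need not be curl-free.

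So the first real step is to show that the curl of $(\omega_i)$ is forced to be small. I would apply $\Delta_t$ to \eqref{eq:deltasvarphi}: after shrinking $\rho$ by a dimensional factor, for nearly all $(x,s,t)\in(\rho\ball)^3$ one may substitute \eqref{eq:deltasvarphi} for $\Delta_s\varphi$ at both $x+t$ and $x$, and symmetrically with $s,t$ interchanged. Using the symmetry $\Delta_t\Delta_s\varphi=\Delta_s\Delta_t\varphi$ of the second difference and subtracting, the contributions of the coefficients $b_\alpha$ with $|\alpha|\le D-1$ have $x$-degree $\le D-2$, so they drop out of the $x$-degree-$(D-1)$ part, leaving
\[ \Big|\sum_{|\alpha'|=D-1}B_{\alpha'}(s,t)\,x^{\alpha'}\Big|=O(1),\qquad B_{\alpha'}(s,t):=\sum_{i=1}^d(\alpha'_i+1)\big[(u_{\alpha'+e_i}\!\cdot s)\,t_i-(u_{\alpha'+e_i}\!\cdot t)\,s_i\big] \]
for nearly all $(x,s,t)\in(\rho\ball)^3$, where $e_i$ denotes the $i$-th coordinate vector. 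By Lemma~\ref{lemma:extractcoeff}, each $B_{\alpha'}$ is $O(1)$ on $(\rho\ball)^2$; being bilinear in $(s,t)$ and bounded on a ball of fixed radius, $B_{\alpha'}$ has $O(1)$ coefficients. A direct computation identifies these coefficients, up to signs and nonzero integer factors, with the coefficients of the homogeneous degree-$(D-1)$ polynomials $\partial_j\omega_k-\partial_k\omega_j$; hence $\partial_j\omega_k-\partial_k\omega_j$ has $O(1)$ coefficients for all $j,k$.

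Finally I would take the explicit primitive $q(x):=\tfrac1{D+1}\sum_{i=1}^d x_i\,\omega_i(x)$, which is homogeneous of degree $D+1$. By Euler's identity for the homogeneous polynomials $\omega_j$,
\[ \partial_j q-\omega_j=\tfrac1{D+1}\sum_{i=1}^d x_i\big(\partial_j\omega_i-\partial_i\omega_j\big), \]
a homogeneous polynomial of degree $D$ whose coefficients are $O(1)$ by the preceding step. Consequently $s\cdot\nabla q(x)-\sum_{|\alpha|=D}(u_\alpha\cdot s)x^\alpha=\sum_j s_j(\partial_j q-\omega_j)(x)$ has $O(1)$ coefficients and so is $O(1)$ on $\rho\ball\times\rho\ball$, while $\Delta_s q(x)-s\cdot\nabla q(x)$ is a polynomial in $x$ of degree $\le D-1$ with polynomial coefficients in $s$; denoting the latter by $\sum_{|\alpha|\le D-1}c_\alpha(s)x^\alpha$ yields \eqref{eq:qrequirement}. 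The step I expect to be most delicate is the smallness of the curl: it rests on recognizing $B_{\alpha'}$ as the $x^{\alpha'}$-coefficient of a polynomial bounded by an absolute constant on a fixed ball, so that Lemma~\ref{lemma:extractcoeff} forces its coefficients to be correspondingly small.
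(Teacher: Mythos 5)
Your proposal is correct, and its early phase — applying $\Delta_t$ to \eqref{eq:deltasvarphi}, exploiting the symmetry $\Delta_s\Delta_t=\Delta_t\Delta_s$, and invoking Lemma~\ref{lemma:extractcoeff} to obtain an approximate curl-free condition on the top-degree coefficients $u_\alpha$ — is the same as the paper's. Where you diverge is the final step. The paper takes the approximate compatibility relations $u_{\beta+e_i,j}(\beta_i+1)=u_{\beta+e_j,i}(\beta_j+1)+O(1)$ and appeals to ``elementary linear algebra'' to assert the existence of a nearby exactly compatible tuple $\tilde u_{\alpha,k}$, from which an exact primitive $q$ is extracted. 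You instead construct the primitive explicitly as $q(x)=\tfrac1{D+1}\sum_i x_i\omega_i(x)$, and use Euler's identity for homogeneous polynomials to express $\partial_j q-\omega_j$ directly in terms of the curl $\partial_j\omega_i-\partial_i\omega_j$, so that the $O(1)$ bound on the curl coefficients immediately gives the $O(1)$ bound on the residual. This bypasses the nonconstructive perturbation step entirely; it is more explicit, avoids having to argue about the structure of the solution set of the exact linear system, and makes visible exactly which quantity (the approximate curl) controls the error. The paper's version is shorter to state but leaves the reader to check that the exact system is consistent and that a nearby exact solution exists; your version trades that for a one-line formula and a short identity, and I would consider it a cleaner way to close the argument.
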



\begin{proof}
Apply $\Delta_t$ to both sides of \eqref{eq:deltasvarphi} to obtain
\begin{align*}
\Delta_t\Delta_s\varphi(x)
&= \Delta_t \sum_{|\alpha|=D}\sum_{j=1}^d u_{\alpha,j}s_j x^\alpha
+ \Delta_t \sum_{|\alpha|\le D-1} b_\alpha(s)x^\alpha + O(1)
\\&= \sum_{|\alpha|=D}\sum_{j=1}^d u_{\alpha,j}s_j \sum_{i=1}^d  \alpha_i x^{\alpha-e_i}t_i
+  \sum_{|\alpha|\le D-2} b_\alpha(s,t)x^\alpha + O(1)
\end{align*}
for nearly all $(x,s,t)\in(\rho\ball)^3$
where $b_\alpha$ are measurable functions.
Since $\Delta_t\Delta_s\varphi = \Delta_s\Delta_t\varphi$, we may
write the corresponding formula for $\Delta_s\Delta_t\varphi$,
equate it to the one derived above, and apply Lemma~\ref{lemma:extractcoeff}
to deduce that 
for each $i,j\in\{1,2,\dots,d\}$,
\begin{equation}
\sum_{|\alpha|=D} u_{\alpha,j}\alpha_ix^{\alpha-e_i}
= \sum_{|\alpha|=D} u_{\alpha,i}\alpha_jx^{\alpha-e_j} + O(1)\end{equation}
for all $x\in\rho\ball$.
Equivalently, for each multi-index $\beta$ satisfying $|\beta|=D-1$,
\begin{equation} \label{eq:ijapproxequality} 
 u_{\beta+e_i,j}(\beta_i+1) = u_{\beta+e_j,i}(\beta_j+1) + O(1)\end{equation}
for each $i,j$.

On the other hand, a homogeneous polynomial $Q$ of degree $D+1$
satisfies the exact relation 
$\Delta_s Q(x) = \sum_{|\alpha|=D} \sum_{j=1}^d \tilde u_{\alpha,j}s_jx^\alpha +R(x,s)$
for some $R$, where $x\mapsto R(x,s)$ is a a polynomial of degree $\le D-1$ for each $s$,
if and only if 
$\partial Q(x)/\partial x_j = \sum_{|\alpha|=D}  \tilde u_{\alpha,j} x^{\alpha}$ 
for each $j\in\{1,2,\dots,d\}$.
This system of equations is solvable for $Q$ if and only if
\begin{equation} \label{eq:ijexactequality}
\sum_{|\alpha|=D} \tilde u_{\alpha,j} \alpha_i x^{\alpha-e_i}
= \sum_{|\alpha|=D} \tilde u_{\alpha,i} \alpha_j x^{\alpha-e_j}
\end{equation}
for all $i\ne j\in\{1,2,\dots,d\}$.
Equivalently, for each multi-index $\beta$ satisfying $|\beta|=D-1$,
\begin{equation} \label{eq:ijequality}
\tilde u_{\beta+e_i,j}(\beta_i+1) = \tilde u_{\beta+e_j,i}(\beta_j+1)\end{equation}
for each $i,j$.

The tuple $(u_{\alpha,k}: |\alpha|=D \text{ and } 1\le k\le d)$
satisfies the system of approximate equations \eqref{eq:ijapproxequality}.
By elementary linear algebra, there exists a tuple $(\tilde u_{\alpha,k})$
with $|\tilde u_{\alpha,k}-u_{\alpha,k}|=O(1)$ for all $\alpha,k$
that satisfies the corresponding system of exact equations \eqref{eq:ijequality}.
This system of equations implies the existence of a homogeneous polynomial $q$
of degree $D+1$ that satisfies
$\partial q(x)/\partial x_j = \sum_{|\alpha|=D}  \tilde u_{\alpha,j} x^{\alpha}$ 
for each $j\in\{1,2,\dots,d\}$.
Therefore
$\Delta_s q(x) = \sum_{|\alpha|=D} \sum_{j=1}^d \tilde u_{\alpha,j}s_jx^\alpha +R(x,s)$
where $R$ is as above.
\end{proof}

The proof of Theorem~\ref{thm:polyfunctleqnmult}
is very similar to that of Theorem~\ref{thm:polyfunctleqn}.
Details are left to the reader. \qed

\section{A final lemma}

The form of the conclusion of the next lemma contrasts with that
of Lemma~\ref{lemma:nearlycharacter}.
In Lemma~\ref{lemma:nearlycharacter}, the logarithms of the factors in the hypothesis
are only nearly determined up to arbitrary additive corrections in $2\pi i \integers$.
In Lemma~\ref{lemma:lastgasp}, no such arbitrary additive corrections arise.

\begin{lemma}\label{lemma:lastgasp}
There exist $A<\infty$ and $\delta>0$ with the following property.
Let $v_j\in\reals$ for $j=1,2$. Let $\eta>0$.
Suppose that
\[ |e^{i(u_1v_1-u_2v_2)}-1|\le\eta\]
for all $(u_1,u_2)\in[0,1]^2$ outside a set of Lebesgue measure $\delta$.
Then $|v_1|+|v_2|\le C\eta$.
\end{lemma}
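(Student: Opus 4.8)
The plan is to convert the multiplicative hypothesis into an assertion that $u_1v_1-u_2v_2$ stays close to $2\pi\integers$, to isolate $v_1$ by differencing in $u_1$ along a generic horizontal line $u_2=u_2^\ast$, and then to invoke a Weyl-type rigidity fact: a real number $t$ for which $ht$ remains within $\eps$ of $2\pi\integers$ for \emph{every} $h$ in a fixed interval must satisfy $|t|=O(\eps)$. Throughout I would work in the regime where $\eta$ and $\delta$ are both below suitable absolute constants, which is the only regime in which the lemma is applied.

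First I would pass from $|e^{i\theta}-1|\le\eta$ to a distance bound: since $|e^{i\theta}-1|=2|\sin(\theta/2)|$ and $|\sin r|\ge\tfrac{2}{\pi}\operatorname{dist}(r,\pi\integers)$, the hypothesis gives $\operatorname{dist}(u_1v_1-u_2v_2,\,2\pi\integers)\le C_0\eta$ (with $C_0=\pi/2$) for all $(u_1,u_2)\in[0,1]^2$ outside a set of measure $\le\delta$. Then, by Fubini together with Chebyshev's inequality, I would select a value $u_2^\ast\in[0,1]$ whose horizontal slice of the exceptional set has measure $\le\delta^{1/2}$; this yields a set $H\subseteq[0,1]$ with $|H|\ge 1-\delta^{1/2}$ on which $\operatorname{dist}(u_1v_1-u_2^\ast v_2,\,2\pi\integers)\le C_0\eta$.

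Next I would difference two such estimates: for $u_1,u_1'\in H$ the term $u_2^\ast v_2$ cancels and the triangle inequality in $\reals/2\pi\integers$ gives $\operatorname{dist}((u_1-u_1')v_1,\,2\pi\integers)\le 2C_0\eta$. A Steinhaus-type count shows that for $\delta$ small, $H-H\supseteq[0,\tfrac12]$, since for $h\in[0,\tfrac12]$ one has $|H\cap(H+h)|\ge 2|H|-1-h>0$. Hence $\operatorname{dist}(hv_1,\,2\pi\integers)\le 2C_0\eta$ for every $h\in[0,\tfrac12]$. The rigidity step follows: if $|v_1|\ge 2\pi$, then $h\mapsto\operatorname{dist}(hv_1,2\pi\integers)$ equals $h|v_1|$ for $0\le h\le\pi/|v_1|$ and so attains the value $\pi$ at some $h\le\pi/|v_1|\le\tfrac12$, contradicting $2C_0\eta<\pi$; therefore $|v_1|<2\pi$, whence $\operatorname{dist}(hv_1,2\pi\integers)=h|v_1|$ on $[0,\tfrac12]$, and evaluating at $h=\tfrac12$ gives $|v_1|\le 4C_0\eta$. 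Finally, the measure-preserving change of variables $(u_1,u_2)\mapsto(u_2,u_1)$ turns the hypothesis for $(v_1,v_2)$ into the hypothesis for $(v_2,v_1)$ — using $|e^{-i\alpha}-1|=|e^{i\alpha}-1|$ — so the same argument gives $|v_2|\le 4C_0\eta$, and summing yields the lemma with $C=8C_0$.

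The trigonometric reduction, the Fubini slicing, and the Steinhaus inclusion are all routine, and I do not expect any difficulty there. The one point that needs care — and the genuine ``rigidity'' content — is the final step: it is essential that after differencing we control $\operatorname{dist}(hv_1,2\pi\integers)$ on an \emph{entire interval} of $h$ rather than merely on a set of positive measure, because a large $v_1$ would cause $hv_1$ to equidistribute modulo $2\pi$ and the bound could still hold on a set of small but positive measure. This is exactly why the argument is arranged to produce the estimate on all of $H-H$, and why the Steinhaus step is inserted.
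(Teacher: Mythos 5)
Your argument is correct, and it takes a genuinely different route from the paper's. The paper stays on the unit circle: after fixing a generic $u_2$ (so that the slice of the exceptional set has measure $\le\delta$), it locates by pigeonhole a subinterval $I\subset[0,1]$ of length $\asymp A\eta/|v_1|$ on which the exceptional set has density $\le\delta$; the image of $I$ under $u_1\mapsto e^{iu_1v_1}$ is an arc of length $\asymp A\eta$, and since only a $\delta$--fraction of $I$ is deleted, the image of $I\setminus E$ still has diameter $\asymp A\eta$, while the hypothesis confines that image to a disk of radius $\eta$ about $e^{iu_2v_2}$ --- a contradiction once $A$ is chosen large. You instead lift the hypothesis to a statement about $\dist(\cdot,2\pi\integers)$, difference in $u_1$ along a generic slice to cancel $v_2$, and use a Steinhaus-type count ($|H\cap(H+h)|\ge 2|H|-1-h>0$ for $h\in[0,\tfrac12]$) to upgrade a large-measure estimate to one holding on the \emph{entire} interval $[0,\tfrac12]$; from there the rigidity step is an explicit evaluation of $\dist(hv_1,2\pi\integers)$. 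Both proofs are of comparable length, and both are organized around the same pitfall, which you identify explicitly at the end: an estimate for $\dist(hv_1,2\pi\integers)$ on a set of positive measure (rather than on a full interval) would be useless, since a large $v_1$ equidistributes $hv_1$ modulo $2\pi$. The paper resolves this by compressing the good set into a short interval and measuring arc diameters; you resolve it by producing a full interval via Steinhaus and then reading off the bound. A minor remark: by Fubini alone one can select a slice $u_2^\ast$ with exceptional measure $\le\delta$ rather than the $\delta^{1/2}$ your Chebyshev step yields, which is what the paper does, but this affects only constants and not the structure of the argument.
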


\begin{proof}
There exists $u_2\in[0,1]$ such that 
\begin{equation} \label{eq:lastgasp} |e^{iu_1v_1}-e^{iu_2v_2}|\le \eta\end{equation}
for all $u_1\in[0,1]$ outside a set $E$ of measure $\le \delta$.
We may assume without loss of generality that $\eta$ is small
and that $v_1\ne 0$. 
Let $A$ be a large constant to be chosen below.
If $|v_1|\ge A\eta$ then 
there must exist an interval $I\subset[0,1]$ of length
comparable to $A|v_1|^{-1}\eta\le 1$ such that $|E\cap I|\le \delta|I|$.
The mapping $I\owns t\mapsto e^{itv_1}$ maps $I$ 
in a measure-preserving manner, up to universal constant factors,
to an arc of the unit circle of length comparable to $A\eta$.
Because $|E\cap I|\le \delta|I|$,
the image of $I\setminus E$ has diameter comparable to $A\eta$.
This contradicts \eqref{eq:lastgasp}. 

Therefore $|v_1|\le A\eta$.
The same reasoning applies to $v_2$.
\end{proof}

\end{document}